 \newtheorem{thm}{Theorem}[section]
 \newtheorem{corollary}[thm]{Corollary}
 \newtheorem{lemma}[thm]{Lemma}
 \newtheorem{Proposition}[thm]{Proposition}
 \theoremstyle{definition}
 \theoremstyle{remark}
 \newtheorem{example}{Example}
 \numberwithin{equation}{section}
 \newcommand{\R}{\mathbb{R}}
  \newcommand{\E}{\mathcal{E}}
   \renewcommand{\H}{\mathcal{H}}
    \newcommand{\T}{\mathcal{T}}
  \renewcommand{\S}{\mathbb{S}}
  \renewcommand{\P}{\mathbb{P}}
   \newcommand{\Z}{\mathbb{Z}}
    \renewcommand{\H}{\mathcal{H}}
\renewcommand{\L}{\mathcal{L}}
\begin{document}

%
%

\title[Quadratic Points of Surfaces in Projective $3$-Space]
 {{\LARGE Quadratic Points of Surfaces in Projective $3$-Space }}

\author[M.Craizer]{Marcos Craizer}

\address{%
Departamento de Matem\'{a}tica- PUC-Rio\br
Rio de Janeiro, RJ, Brasil}
\email{craizer@puc-rio.br}

\author[R.Garcia]{Ronaldo A. Garcia}

\address{%
Instituto de Matem\'atica e Estat\'istica - UFG\br
Goi\^ania, GO, Brasil}
\email{ragarcia@ufg.br}

\thanks{The authors thank CNPq for financial support during the preparation of this manuscript. The second author thanks also FAPEG. \newline E-mail of the corresponding author: craizer@puc-rio.br}



\date{October 4, 2017}

\begin{abstract}
Quadratic points of a surface in the projective $3$-space are the points which can be 
exceptionally well approximated by a quadric. They are also singularities of a $3$-web in the elliptic part
and of a line field in the hyperbolic part of the surface. 
We show that generically the index of the $3$-web at a quadratic point is $\pm 1/3$, while the index of the line field is $\pm 1$. 
Moreover, for an elliptic quadratic point whose cubic form is semi-homogeneous, we can use Loewner's conjecture to show that
the index is at most $1$.  

From the above local results we can conclude some global results: A generic compact elliptic surface has at least $6$ quadratic points, a
compact elliptic surfaces with semi-homogeneous cubic forms has at least $2$ quadratic points and the number of quadratic points in 
a hyperbolic disc is odd. By studying the behavior of the cubic form in a neighborhood of the parabolic curve, we also obtain a relation 
between the indices of the quadratic points of a generic surface with non-empty elliptic and hyperbolic regions.
\end{abstract}

\maketitle

\section{Introduction}

Consider a surface $M$ in $3$-space. A quadratic point $p$ is a point
where $M$ can be exceptionally well approximated by a quadric. 
Generically, the quadratic points are isolated and occur at the elliptic or the hyperbolic region of $M$, 
not in the parabolic curve.

Examples of algebraic hyperbolic tori in projective $3$-space with many quadratic points can be found in \cite{Tabach}, where it is conjectured that any hyperbolic tori in $\P^3$ 
should have quadratic points. In \cite{Ronaldo} one can find hyperbolic tori in $\S^3$ with no quadratic points. In \cite{Uribe}, it is proved that any hyperbolic disk bounded by a parabolic curve
should have an odd number of quadratic points. 

The cubic form $C$ defines a binary cubic differential equation (BCDE) outside the parabolic curve.
This BCDE is totally real in the elliptic region and has one root in the hyperbolic region, and is singular exactly 
at the quadratic points. Thus the cubic form defines a line field ${\mathcal H}$ in the hyperbolic region and a $3$-web ${\mathcal E}$ in the elliptic region,
except at quadratic points. Both fields are projectively invariant.

The behavior of a totally real $3$-web close to a singularity was studied in \cite{Ballesteros}. Applying the results of this paper
to the particular case of a cubic form, we verify that generically the $3$-web $\mathcal{E}$ has index $\pm\frac{1}{3}$ at quadratic elliptic points. 
As a corollary we have that at a generic compact convex surface, there exist at least $6$ quadratic points.

For semi-homogeneous cubic forms, we show a relation between the index of the $3$-web and Loewner's conjecture
to conclude that the index of a quadratic point is at most $1$. As a corollary, we obtain that, under the semi-homogeneity hypothesis,  
the number of quadratic points in a compact convex surface is at least $2$, which is a version of Carath\'eodory conjecture. We also 
give an example of a compact rotation surface with exactly $2$ quadratic points.

For quadratic points in the hyperbolic region, we show that generically the line field $\mathcal{H}$ has index $\pm 1$. 
By considering the polar blow-up, we describe the generic phase portrait of $\mathcal{H}$ at these points. 

In a neighborhood of the parabolic curve, there exists a line field ${\mathcal L}$, tangent to the parabolic curve, which coincides 
with the line field $\mathcal{H}$ in the hyperbolic part $H$ and with a line field $\E_1$ of the $3$-web $\E$ in the elliptic part $E$. With the help of this line field, we prove a version of Poincar\'e-Hopf Theorem which says that for a generic compact surface $M$ with elliptic region $E$ and hyperbolic region $H$, 
$$
\sum_{p_i\in E} Ind(p_i,\E) +\sum_{p_i\in H} Ind(p_i,\H)=\chi(M).
$$
where the sum is taken over the the quadratic points $p_i$. 

The paper is organized as follows: In section 2 we show the relation between quadratic points and cubic forms. In section 3
we study the behavior of the $3$-web $\mathcal{E}$ at a simple quadratic point, while in section 4 we calculate the index
of a $3$-web coming from a semi-homogeneous cubic form. In section 5 we study the behavior of the line field $\mathcal{H}$
at a simple quadratic point. In section 6 we calculate the cubic form in a neighborhood of a generic parabolic curve. Finally, 
in section 7, we prove some Poincar\'e-Hopf type theorems.

\section{Quadratic points and cubic forms}

\subsection{Darboux directions and quadratic points}

A general point $p\in M$ admits a $3$-dimensional space quadrics with contact of order $2$ with the surface, i.e., 
with the same tangent plane and the same curvatures in all directions. Among these quadrics, there are three one-parameter families with the property that the contact function is a perfect cube. If $p$ is elliptic, all three families are real, while if $p$ is hyperbolic, one of these families is real and the other two complex.The null directions of the perfect cubes are called Darboux directions (\cite[p.358]{Darboux}, \cite[p. 141-144]{Lane}). 

{\it Quadratic points} $p\in M$ are points that admits an osculating quadric of order $3$. 
Quadratic points are invariant by projective transformations of the $3$-space and admit a 
one-dimensional space of third order osculating quadrics.

\begin{example}
Let $p=(0,0,0)$ and $M$ be given by
\begin{equation}\label{eq:PickDef}
z=\frac{1}{2}(x^2+y^2)+\frac{c}{6}(x^3-3xy^2)+O_4(x,y).
\end{equation}
Any quadric of the form 
$$
z=\frac{1}{2}(x^2+y^2)+\alpha xz+\beta yz+\gamma z^2
$$
has a second order contact with $M$ at $p$. For the quadrics
$$
z=\frac{1}{2}(x^2+y^2)-cxz+\gamma z^2,
$$
the contact function at order $3$ is a multiple of $x^3$. Thus $(0,1)$ is a Darboux direction. The other Darboux directions 
are $(3,\pm\sqrt{3})$. The point $p$ is quadratic if and only if $c=0$.
\end{example}

\begin{example}
Let $p=(0,0,0)$ and $M$ be given by
\begin{equation}\label{eq:PickIndef}
z=xy+a^3x^3+b^3y^3+O_4(x,y).
\end{equation}
Any quadric of the form 
$$
z=xy+\alpha xz+\beta yz+\gamma z^2
$$
has a second order contact with $M$ at $p$. For the quadrics
$$
z=xy-3a^2bxz-3ab^2yz+\gamma z^2,
$$
the contact function at order $3$ is a multiple of $(ax+by)^3$. Thus $(-b,a)$ is the Darboux direction. 
The point $p$ is quadratic if and only if $a=b=0$. 
\end{example}

\subsection{Affine differential geometry and cubic forms}

In the elliptic and hyperbolic parts of $M$, consider the equation
\begin{equation*}
D_XY=\nabla_XY+g(X,Y)\xi
\end{equation*}
where $X,Y$ are vector fields on $M$, $\xi$ is a transversal vector field, $\nabla$ is an affine connection and $g$ a non-degenerate
bilinear form. It turns out that there exists a unique (up to sign) vector field $\xi$ such that $D_X\xi$ is tangent to $M$, for any vector field $X$
on $M$, and the area form induced by $\xi$ coincides with the area form defined by $g$. This $\xi$ is called the {\it affine normal vector field},
the corresponding $g$ and $\nabla$ are called the {\it Blaschke metric} and {\it induced connection}, respectively. At elliptic points, the Blaschke
metric is definite, while at hyperbolic points, the Blaschke metric is indefinite (see \cite{Nomizu}).

The cubic form in the elliptic and hyperbolic parts of $M$ are defined by
$C=\nabla g$.
It turns out that the conformal class of the cubic forms is projectively invariant. At elliptic points, $C$ has three real null directions, while
at hyperbolic points, $C$ has only one real null direction (see \cite{Nomizu}). The following result can be found in 
\cite[p.114]{Tabach1}:

\begin{lemma}
A point $p$ is quadratic if and only if $C(p)=0$.
\end{lemma}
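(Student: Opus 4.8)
The plan is to reduce the statement to a computation in a normalized local model, exploiting that both conditions --- ``$p$ is quadratic'' and ``$C(p)=0$'' --- are projectively invariant (quadratic points are projectively invariant by definition, and the conformal class of $C$ is projectively invariant, so the vanishing of $C$ at a point is well defined). First I would fix affine coordinates so that $p$ is the origin, the tangent plane is $\{z=0\}$, and the second-order part of $M$ is a fixed quadratic form $Q$, with $Q=\frac12(x^2+y^2)$ in the elliptic case and $Q=xy$ in the hyperbolic case (exactly as in Examples 2.1 and 2.2). Locally $M$ is then the graph $z=Q(x,y)+C_3(x,y)+O_4(x,y)$, where $C_3$ is a homogeneous cubic.

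The key observation concerns which cubics can be absorbed by a quadric having second-order contact. As noted before the examples, the quadrics with second-order contact at $p$ form a $3$-dimensional family, and they can be written $z=Q+\alpha xz+\beta yz+\gamma z^2$. Substituting $z=Q+O_3$ and expanding shows that, as a graph, such a quadric has cubic part exactly $\alpha\,xQ+\beta\,yQ$ (the $\gamma z^2$ term contributes only at order $4$). Hence $M$ admits a third-order osculating quadric at $p$, i.e.\ $p$ is quadratic, if and only if $C_3\in\langle xQ,\,yQ\rangle$. In the elliptic case $\langle xQ,yQ\rangle=\langle x(x^2+y^2),\,y(x^2+y^2)\rangle$, whose complement in the $4$-dimensional space of cubics is spanned by the harmonic cubics $x^3-3xy^2$ and $3x^2y-y^3$; in the hyperbolic case $\langle xQ,yQ\rangle=\langle x^2y,\,xy^2\rangle$, with complement $\langle x^3,y^3\rangle$. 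Thus $p$ is quadratic if and only if the projection of $C_3$ onto this complement vanishes, matching Examples 2.1 and 2.2 ($c=0$ and $a=b=0$, respectively).

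It remains to identify this projection with the Fubini--Pick cubic form $C=\nabla g$ at $p$. Here I would invoke the affine apparatus for a graph: computing the affine normal $\xi$, the Blaschke metric $g$ and the induced connection $\nabla$ for $z=Q+C_3+O_4$ shows that $\xi$ deviates from the vertical only at higher order, and that the value $C(p)=(\nabla g)(p)$ is a nonzero constant multiple of the component of $C_3$ complementary to $\langle xQ,yQ\rangle$ --- equivalently, the part of $C_3$ that is trace-free with respect to $Q$. Combining this with the previous paragraph gives the chain $p$ quadratic $\iff C_3\in\langle xQ,yQ\rangle \iff C(p)=0$.

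The main obstacle is this last identification: one must check that the equiaffine normalization (replacing the naive vertical transversal by the genuine affine normal, and rescaling so that the induced and Blaschke area forms agree) does not disturb the relevant cubic component of $C_3$. This is a routine but slightly delicate computation in Blaschke normal form; alternatively one can simply cite the explicit formula for the Fubini--Pick form of a graph from \cite{Nomizu}, after which the equivalence is immediate.
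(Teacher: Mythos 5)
Your proposal is correct and follows essentially the same route as the paper: normalize $M$ to a graph over its tangent plane with second-order part $Q$, use the explicit three-parameter family of second-order-contact quadrics (Examples 2.1 and 2.2) to see that quadratic points are detected by the cubic jet modulo $\langle xQ,\,yQ\rangle$, and identify the residual component with the Fubini--Pick form via the graph formulas. The only difference is presentational: the paper first kills the $\langle xQ,\,yQ\rangle$ part by an affine change of coordinates and reads everything off the normal forms \eqref{eq:PickDef} and \eqref{eq:PickIndef}, while you keep a general cubic and project; the final identification you flag as delicate is exactly what the paper's Appendix~\ref{app:CubicOrdern} supplies.
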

\begin{proof}
Assume first that $p$ is elliptic. Then, up to an affine change of coordinates, the surface $M$ in a neighborhood of $p$
is given by equation \eqref{eq:PickDef}. Then $p$ is quadratic if and only if $c=0$, i.e., if and only if the cubic form at $p$ is zero. 

If $p$ is hyperbolic, up to an affine change of coordinates, the surface $M$ in a neighborhood of $p$ is given by
equation \eqref{eq:PickIndef} The point $p$ is quadratic if and only if $a=b=0$, i.e., if and only if the cubic form at $p$ is zero. 
\end{proof}

\section{Simple quadratic points in the elliptic region}

Assume that
$M$ is the graph of $f:U\subset\R^2\to\R$ given by 
\begin{equation}\label{eq:general4}
f=\frac{1}{2}(x^2+y^2)+\frac{1}{24}\left( a_{40}x^4+4a_{31}x^3y+6a_{22}x^2y^2+4a_{13}xy^3+a_{04}y^4 \right)+O_5(x,y),
\end{equation}
where $O_5(x,y)$ denotes the terms of order $\geq 5$ in $(x,y)$. Then the cubic form is given by
\begin{equation}\label{eq:CubicFormElliptic}
\omega= \omega_1+O_2(x,y),
\end{equation}
where
\begin{equation}
\omega_1= (ax+by)(dx^3-3dxdy^2)+(cx+dy)(dy^3-3dydx^2)
\end{equation}
and 
\begin{equation}
a=a_{40}-3a_{22},\ b=a_{31}-3a_{13},\ c=a_{13}-3a_{31}\ d=a_{04}-3a_{22},
\end{equation}
(see appendix A). 
We say that the singularity $(0,0)$ of $\omega$ is {\it simple} if it is also an isolated singularity of $\omega_1$. It is easy 
to check that this condition is equivalent to
\begin{equation}\label{Hyp:11}
ad-bc\neq 0.
\end{equation}
Observe that for a generic surface $M$, all quadratic points are simple.

\subsection{Index of simple quadratic points}\label{sec:IndexE}

Using complex notation we can write $\omega=(A+iB)dz^3+(A-iB)d\bar{z}^3$, where $A=C_{111}$ and $B=C_{222}$. It is proved in \cite{Ballesteros} that the 
index $I=I(0,0)$ of the singularity is given by
\begin{equation}\label{eq:IndexAB}
I=-\frac{\deg(A+iB)}{3}.
\end{equation}
The complex function $A+iB$ can be approximated by $A_1+iB_1=(ax+by)+i(cx+dy)$. In fact, we have
$$
A+iB=A_1+iB_1+O_2(x,y)
$$
\begin{lemma}\label{lemma:IndexSimple}
For a simple quadratic point, the degree of $A+iB$ is the same degree of $A_1+iB_1$.
\end{lemma}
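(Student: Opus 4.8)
The plan is to read ``degree'' in the sense presupposed by \eqref{eq:IndexAB}, namely the topological degree (winding number) of the map $z\mapsto A+iB$ restricted to a small circle $|z|=r$ and regarded as a map into $\mathbb{C}\setminus\{0\}\simeq S^1$. Under this reading the statement becomes the familiar principle that the degree of a planar map at an isolated zero is determined by its leading term, provided that term already vanishes only at the origin. Here the leading term is the linear map $A_1+iB_1=(ax+by)+i(cx+dy)$, corresponding to the matrix $\left(\begin{smallmatrix} a & b\\ c & d\end{smallmatrix}\right)$. The simplicity hypothesis \eqref{Hyp:11}, i.e. $ad-bc\neq 0$, says exactly that this matrix is invertible, so $A_1+iB_1$ vanishes only at the origin and has a well-defined degree, in fact $\deg(A_1+iB_1)=\operatorname{sign}(ad-bc)=\pm 1$.

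First I would quantify the domination of the linear part over the remainder on a small circle. Since $A_1+iB_1$ is a linear isomorphism, the constant $m:=\min_{|w|=1}|A_1(w)+iB_1(w)|$ is strictly positive, and by homogeneity $|A_1(z)+iB_1(z)|\ge m|z|$ for all $z$. On the other hand, the remainder $R:=(A+iB)-(A_1+iB_1)=O_2(x,y)$ satisfies $|R(z)|\le M|z|^2$ near the origin for some constant $M>0$. Hence for every $r$ with $0<r<m/M$ one has $|R(z)|<|A_1(z)+iB_1(z)|$ on the whole circle $|z|=r$.

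Next I would invoke homotopy invariance of degree through a straight-line homotopy. Define $H_t(z)=(A_1(z)+iB_1(z))+t\,R(z)$ for $t\in[0,1]$, so that $H_0=A_1+iB_1$ and $H_1=A+iB$. On $|z|=r$ the estimate above gives $|H_t(z)|\ge |A_1(z)+iB_1(z)|-t|R(z)|\ge m r-Mr^2>0$, so $H_t$ is a homotopy of nonvanishing maps $\{|z|=r\}\to\mathbb{C}\setminus\{0\}$. Since the degree is a homotopy invariant, $\deg(A+iB)=\deg(A_1+iB_1)$, which is the assertion; this is simply the Rouch\'e (``dog on a leash'') argument phrased for winding numbers.

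I do not expect a genuine obstacle here, since the entire content is that simplicity makes the linear part nondegenerate and hence dominant on small circles. The only point requiring care is to confirm that the degree appearing in \eqref{eq:IndexAB}, borrowed from \cite{Ballesteros}, is indeed the winding number of $A+iB$ on a small circle around the quadratic point rather than some other normalization; once that identification is in place, the uniform estimates and the homotopy close the proof.
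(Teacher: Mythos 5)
Your proposal is correct and is essentially the paper's own argument: the paper likewise chooses $r$ small enough that $\|A+iB-(A_1+iB_1)\|\leq\frac{1}{2}\|A_1+iB_1\|$ on $|z|=r$ (possible because simplicity makes $A_1+iB_1$ a linear isomorphism) and concludes equality of degrees by the same Rouch\'e/homotopy principle. You merely spell out the quantitative estimates and the straight-line homotopy that the paper leaves implicit.
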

\begin{proof}
Since the image of $|z|=r$ by $A_1+iB_1$ does not pass through the origin, we can choose $r$ sufficiently small such that
$$
||A+iB-(A_1+iB_1)||\leq \frac{1}{2}||A_1+iB_1||.
$$
This implies that the degrees of $A+iB$ and $A_1+iB_1$ are the same.
\end{proof}

Define the {\it characteristic polynomial}
\begin{equation}\label{eq:DefineP}
P(x,y)=\left( ax+by \right)  (x^3-3xy^2)+\left( cx+dy \right) (y^3-3yx^2),
\end{equation}
and let
\begin{equation}\label{eq:DefineQ}
Q(x,y)=-\left( ax+by \right)  (y^3-3xy^2)+\left( cx+dy \right) (x^3-3xy^2).
\end{equation}
Observe that if the quadratic point is simple, $P$ and $Q$ have no common roots.

Let $P(t)=P(\cos(t),\sin(t))$, $Q(t)=Q(\cos(t),\sin(t))$, $A_1(t)=A_1(\cos(t),\sin(t))$ and $B_1(t)=B_1(\cos(t),\sin(t))$. Then
$$
P(t)+iQ(t)=(A_1(t)+iB_1(t))\exp{(3it)}.
$$
Thus the degree of $P+iQ$ is $\deg(A_1+iB_1)+3$, and so we conclude that 
\begin{equation}
I=1-\frac{\deg(P+iQ)}{3}.
\end{equation}
The polynomials $(P,Q)$ were considered in \cite{Ivanov} in the context of euclidean umbilic points.

\subsection{Polar blow-up of the cubic form}\label{sec:PhaseE}

We can factor the cubic form $\omega$ as $\omega=\lambda_1\lambda_2\lambda_3$, where $\lambda_j$ is a $1$-form 
given by
$$
\lambda_j= \cos(\phi_j)dx -\sin(\phi_j)dy=e^{i\phi_j}dz+e^{-i\phi_j}d\bar{z}.
$$
Thus
$$
\omega_1=\lambda_1\lambda_2\lambda_3=\exp{(i(\phi_1+\phi_2+\phi_3))}dz^3+\exp{(-i(\phi_1+\phi_2+\phi_3))}d\bar{z}^3
$$
$$
+\left[ \exp{(i(\phi_1+\phi_2-\phi_3))}+\exp{(i(\phi_1-\phi_2+\phi_3))}+\exp{(i(-\phi_1+\phi_2+\phi_3))} \right] d\bar{z}dz^2
$$
$$
+\left[ \exp{(i(\phi_1-\phi_2-\phi_3))}+\exp{(i(-\phi_1+\phi_2-\phi_3))}+\exp{(i(-\phi_1-\phi_2+\phi_3))} \right] d\bar{z}^2dz.
$$

We conclude that $\phi_1=\theta-\pi/3$, $\phi_2=\theta$, $\phi_3=\theta+\pi/3$, where
\begin{equation}\label{eq:TanTheta}
\tan(3\theta)=\frac{c\cos t+d\sin t}{a\cos t+b\sin t}.
\end{equation}

Consider the polar projection $\pi:(-\delta,\delta)\times\R\to\R^2$ given by $\pi(r,t)=(r\cos(t),r\sin(t))$. Then 
\begin{equation*}
\pi^*\lambda_j=\cos(\phi_j+t)dr-r\sin(\phi_j+t)dt.
\end{equation*}
Define the $6\pi$-periodic vector fields
\begin{equation}
X_j=r\sin(\phi_j+t)\frac{\partial}{\partial r}+\cos(\phi_j+t)\frac{\partial}{\partial t}, 
\end{equation}
in the kernel of $\pi^*\lambda_j$. Then $X_j$ is singular only at points $(0,t_0)$ such that $\phi_j(t_0)+t_0=k\pi+\frac{\pi}{2}$, $k\in\Z$.
At these points 
$$
DX_j(0,t_0)=(-1)^k
\left[
\begin{array}{cc}
1 & 0 \\
0 & -(1+\phi_j'(t_0))
\end{array}
\right].
$$
The singularity $(0,t_0)$ is hyperbolic for $X_j$ if and only if $1+\phi_j'(t_0)\neq 0$.

Since $\pi^*\omega=\pi^*\lambda_1\cdot\pi^*\lambda_2\cdot\pi^*\lambda_3$, 
the singularities of $\pi^*\omega$ are singularities of $\pi^*\lambda_j$, for some $j=1,2,3$. The following lemma is proved in \cite{Ballesteros}:

\begin{lemma}
The singularities of $\pi^*\omega$ are of the form $(0,t_0)$, where $t_0$ is a root of the characteristic polynomial $P(t)$. 
Moreover, the singularity $(0,t_0)$ is hyperbolic for $X_j$ if and only if $t_0$ is a simple root of $P(t)$.
\end{lemma}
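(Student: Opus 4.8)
The plan is to analyze the polar pullback $\pi^*\omega = \pi^*\lambda_1\cdot\pi^*\lambda_2\cdot\pi^*\lambda_3$ directly, using the explicit form of the vector fields $X_j$ and the angle functions $\phi_j(t)$ already computed. Since a singularity of $\pi^*\omega$ must be a singularity of some $\pi^*\lambda_j$, and from the preceding computation $X_j$ is singular at $(0,t_0)$ exactly when $\phi_j(t_0)+t_0 = k\pi + \pi/2$, my first step is to translate this angular condition into a condition on the characteristic polynomial $P(t)$.

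\textbf{Step 1: identifying the singular locus with the roots of $P$.} First I would use the relation $P(t)+iQ(t) = (A_1(t)+iB_1(t))e^{3it}$ together with the identification $\phi_1=\theta-\pi/3$, $\phi_2=\theta$, $\phi_3=\theta+\pi/3$ from equation \eqref{eq:TanTheta}. The condition $\phi_j(t_0)+t_0 \equiv \pi/2 \pmod{\pi}$ says that $\cos(\phi_j(t_0)+t_0)=0$. I would argue that for some choice of $j\in\{1,2,3\}$ this holds precisely when $\theta(t_0)+t_0$ is an odd multiple of $\pi/6$, which in turn—via the $3$-fold symmetry built into the $\phi_j$—is equivalent to $P(t_0)=0$. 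Concretely, one writes $P(t)=\mathrm{Re}\big((A_1(t)+iB_1(t))e^{3it}\big)$, and since $A_1(t)+iB_1(t)$ has argument $3\theta(t)$ by \eqref{eq:TanTheta}, we get $P(t)$ proportional to $\cos(3\theta(t)+3t) = \cos\big(3(\theta(t)+t)\big)$. The vanishing of this cosine is exactly the condition that one of $\theta+t, \theta+t\pm\pi/3$ equals an odd multiple of $\pi/2$, i.e. that some $\phi_j+t\equiv\pi/2\pmod\pi$. This establishes the first claim: the singularities of $\pi^*\omega$ are the points $(0,t_0)$ with $P(t_0)=0$.

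\textbf{Step 2: hyperbolicity versus simplicity of the root.} For the second claim, I would recall from the displayed Jacobian that $(0,t_0)$ is hyperbolic for $X_j$ iff $1+\phi_j'(t_0)\neq 0$. Since all three $\phi_j$ differ from $\theta$ by a constant, $\phi_j'=\theta'$, so hyperbolicity is equivalent to $\theta'(t_0)\neq -1$. I would then differentiate $P(t) = \rho(t)\cos\big(3(\theta(t)+t)\big)$ (with $\rho(t)=|A_1(t)+iB_1(t)|>0$ by simplicity, \eqref{Hyp:11}) and evaluate at a root $t_0$: since the cosine factor vanishes there, $P'(t_0) = -3\rho(t_0)\big(\theta'(t_0)+1\big)\sin\big(3(\theta(t_0)+t_0)\big)$, and the sine factor is $\pm1$ at such a point. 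Hence $P'(t_0)\neq 0$ if and only if $\theta'(t_0)+1\neq 0$, which is exactly the hyperbolicity condition. Thus $t_0$ is a simple root of $P$ iff the singularity is hyperbolic.

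\textbf{Main obstacle.} The delicate point is the bookkeeping in Step 1: showing that the three angular conditions $\phi_j+t\equiv\pi/2$, $j=1,2,3$, taken together are equivalent to the single condition $P(t_0)=0$, and that no root of $P$ is spuriously created or lost. The $3\theta$ in \eqref{eq:TanTheta} is only defined modulo the branch of $\arctan$, so I must check that the correspondence between zeros of $\cos(3(\theta+t))$ and the union over $j$ of the zeros of $\cos(\phi_j+t)$ is a genuine bijection on $[0,2\pi)$ and that the factor $\rho(t)$ never vanishes (guaranteed by simplicity). Since the detailed verification of the equivalence $P(t)\propto\cos(3(\theta+t))$ and the multiplicity count appear already in \cite{Ballesteros}, I would cite that computation for the routine trigonometric identities and focus the written proof on the two-line hyperbolicity argument of Step 2, which is where the simplicity hypothesis \eqref{Hyp:11} does the real work.
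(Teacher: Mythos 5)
Your proof is correct. Note that the paper does not actually prove this lemma itself --- it is quoted from \cite{Ballesteros} --- so there is no in-text argument to compare against line by line; what the paper does prove, immediately afterwards and by direct coordinate computation (differentiating \eqref{eq:TanTheta} and expanding $P'+3Q$ trigonometrically under the constraint $P(t)=0$), is the identity $P'(t_0)=-3Q(t_0)(1+\phi_j'(t_0))$, from which the hyperbolicity criterion follows. Your route packages both facts at once: writing $P+iQ=(A_1+iB_1)e^{3it}$ with $A_1+iB_1=\rho e^{3i\theta}$ and $\rho>0$ everywhere on $|z|=r$ by the simplicity hypothesis \eqref{Hyp:11}, gives $P=\rho\cos\bigl(3(\theta+t)\bigr)$ and $Q=\rho\sin\bigl(3(\theta+t)\bigr)$; the zero set of $P$ is then visibly the union over $j=1,2,3$ of the loci $\cos(\phi_j+t)=0$ (the three shifts $\theta,\theta\pm\pi/3$ convert $3(\theta+t)\equiv\pi/2\pmod{\pi}$ into $\phi_j+t\equiv\pi/2\pmod{\pi}$ for exactly one $j$), and differentiating at a root, where the cosine factor kills $\rho'$ and the sine factor is $\pm1$, yields $P'(t_0)=-3\rho(t_0)\bigl(1+\theta'(t_0)\bigr)\sin\bigl(3(\theta+t)\bigr)\big|_{t_0}=-3Q(t_0)\bigl(1+\phi_j'(t_0)\bigr)$ in one line, recovering the paper's subsequent lemma as a byproduct. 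This is cleaner than the paper's computation and makes transparent where $ad-bc\neq 0$ is used (nonvanishing of $\rho$, hence both the legitimacy of the polar form and the equivalence $P'(t_0)\neq 0\Leftrightarrow 1+\phi_j'(t_0)\neq 0$). The only point to make explicit in a written version is the one you already flag: a continuous branch of $3\theta=\arg(A_1+iB_1)$ exists along the circle precisely because $A_1+iB_1$ never vanishes there, and the possible overall sign $\pm\rho$ of the branch is harmless since only nonvanishing matters.
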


\begin{lemma}
For a singular point $(0,t_0)$ of $\pi^*\lambda_j$, we have that
$$
P'(t_0)=-3Q(t_0)(1+\phi_j'(t_0)).
$$
\end{lemma}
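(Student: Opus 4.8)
The plan is to exploit the complex factorization $P(t)+iQ(t)=(A_1(t)+iB_1(t))\exp(3it)$ established just above, together with the polar description of the angles $\phi_j$. First I would put $A_1(t)+iB_1(t)$ into polar form. Because the quadratic point is simple we have $ad-bc\neq 0$, and $A_1(t)+iB_1(t)=(a\cos t+b\sin t)+i(c\cos t+d\sin t)$ can vanish only if the homogeneous linear system in $(\cos t,\sin t)$ has a nontrivial solution, which is excluded. Hence $\rho(t):=|A_1(t)+iB_1(t)|>0$ for every $t$, and I may write $A_1(t)+iB_1(t)=\rho(t)e^{3i\theta(t)}$, where $3\theta(t)=\arg(A_1(t)+iB_1(t))$ is a smooth branch compatible with the defining relation \eqref{eq:TanTheta} for $\tan(3\theta)$.

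Substituting this into the factorization gives $P+iQ=\rho\,e^{3i(\theta+t)}$, so writing $\psi(t)=3(\theta(t)+t)$ I read off the real and imaginary parts
$$
P(t)=\rho(t)\cos\psi(t),\qquad Q(t)=\rho(t)\sin\psi(t).
$$
Differentiating the first of these yields
$$
P'(t)=\rho'(t)\cos\psi(t)-\rho(t)\,\psi'(t)\sin\psi(t),\qquad \psi'(t)=3(\theta'(t)+1).
$$
Next I would evaluate everything at the singular point $t_0$. By definition $(0,t_0)$ is a singularity of $\pi^*\lambda_j$ precisely when $\phi_j(t_0)+t_0=k\pi+\pi/2$, and since $\phi_j=\theta+(j-2)\pi/3$ this forces $\psi(t_0)=3(\theta(t_0)+t_0)=3k\pi+\tfrac{3\pi}{2}-(j-2)\pi$, which is an odd multiple of $\pi/2$. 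Consequently $\cos\psi(t_0)=0$ (recovering $P(t_0)=0$, in agreement with the previous lemma) and $\sin\psi(t_0)=\pm 1$, so that $Q(t_0)=\rho(t_0)\sin\psi(t_0)$. Inserting $\cos\psi(t_0)=0$ into the expression for $P'$ annihilates the $\rho'$ term and leaves
$$
P'(t_0)=-3(\theta'(t_0)+1)\,\rho(t_0)\sin\psi(t_0)=-3(\theta'(t_0)+1)\,Q(t_0).
$$
Since $\phi_j$ and $\theta$ differ by the constant $(j-2)\pi/3$, we have $\phi_j'(t_0)=\theta'(t_0)$, and the identity $P'(t_0)=-3Q(t_0)(1+\phi_j'(t_0))$ follows.

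The computation is short once set up; the delicate part is the bookkeeping at $t_0$. One must verify that the simplicity hypothesis genuinely guarantees $\rho(t_0)\neq 0$, so that passing to the polar form is legitimate and $\theta$ is smooth near $t_0$, and one must track the phase shift $(j-2)\pi/3$ carefully to confirm that $\psi(t_0)$ lands on an odd multiple of $\pi/2$ for the specific index $j$ whose factor $\lambda_j$ is singular there. Once these two points are settled, the vanishing of $\cos\psi(t_0)$ does all the work and no estimates are required.
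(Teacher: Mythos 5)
Your proof is correct, and it takes a genuinely different (and cleaner) route than the paper's. The paper differentiates the defining relation $\tan(3\theta)=\frac{c\cos t+d\sin t}{a\cos t+b\sin t}$ to obtain the explicit formula $3\theta'=\frac{ad-bc}{(a\cos t+b\sin t)^2+(c\cos t+d\sin t)^2}$, computes the trigonometric expression $P'+3Q=(-a\sin t+b\cos t)\cos(3t)+(c\sin t-d\cos t)\sin(3t)$ by hand, and then uses $P(t_0)=0$ to reduce everything to $Q$: it finds $P'+3Q=\frac{bc-ad}{Q}$ and $3\theta'=\frac{ad-bc}{Q^2}$, whence $P'+3Q=-3\theta'Q$. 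Your argument packages all of this into the single identity $P+iQ=\rho e^{i\psi}$ with $\psi=3(\theta+t)$ and lets the vanishing of $\cos\psi(t_0)$ do the work; you never need the explicit formula for $\theta'$, only that $\phi_j'=\theta'$ because the $\phi_j$ differ from $\theta$ by constants, and the factor $1+\phi_j'$ appears transparently as $\psi'/3$. The two points you flag as delicate are exactly the right ones and both check out: $\rho(t_0)=|A_1(t_0)+iB_1(t_0)|\neq 0$ follows from $ad-bc\neq 0$ since $A_1+iB_1$ could only vanish at a nontrivial kernel vector of the matrix with rows $(a,b)$ and $(c,d)$ (the paper uses the same nonvanishing implicitly when it divides by $a\cos t+b\sin t$ and by $Q$); and the phase bookkeeping is safe because $3\cdot(j-2)\pi/3$ is an integer multiple of $\pi$, so $\psi(t_0)$ lands on an odd multiple of $\pi/2$ for every $j$, giving $\cos\psi(t_0)=0$ and $|\sin\psi(t_0)|=1$, hence $Q(t_0)=\rho(t_0)\sin\psi(t_0)\neq 0$. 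What your approach buys is brevity and a structural explanation of the formula; what the paper's buys is the explicit expression for $3\theta'$ in terms of $ad-bc$, which it reuses immediately afterwards to decide when the singularity is a saddle (sign of $P'Q$ versus sign of $ad-bc$).
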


\begin{proof}
Differentiating \eqref{eq:TanTheta} we obtain
$$
\sec^2(3\theta)3\theta'=\frac{ad-bc}{(a\cos t+b\sin t)^2}
$$
and so
$$
3\theta'=\frac{ad-bc}{(a\cos t+b\sin t)^2+(c\cos t+d\sin t)^2}.
$$
On the other hand, 
$$
P'+3Q=(-a\sin t+b\cos t)\cos(3t)+(c\sin t-d\cos t)\sin(3t).
$$
Assuming $P(t)=0$, we have $(a\cos(t)+b\sin(t))\cos(3t)=(c\cos(t)+d\sin(t))\sin(3t)$. Thus
$$
Q=\frac{c\cos(t)+d\sin(t)}{\cos(3t)};\ P'+3Q=\frac{bc-ad}{Q};\ 3\theta'=\frac{ad-bc}{Q^2},
$$
thus proving the lemma.
\end{proof}

As a consequence, the singularity $(0,t_0)$ is a saddle for $X_j$ if and only if $P'(t_0)$ and $Q(t_0)$ have opposite signs. 
So we can determine the phase portrait of the $3$-web in a neighborhood of a simple quadratic point only from the polynomials $P$
and $Q$. 

\subsection{Phase portraits of simple singularities}

Consider the characteristic polynomial $P$ defined by equation \eqref{eq:DefineP}. we may write $P$ as
$$
P=ax^4+(b-3c)x^3y-3(a+d)x^2y^2+(c-3b)xy^3+dy^4.
$$
We can now calculate the discriminant $\Delta$ of $P$ (see \cite{Poston}). The polynomial $P$ has double roots 
if and only if the discriminant $\Delta$ vanishes.
Straightforward calculations show that 
$$
\Delta= S^3-27T^2,
$$
where
$$
S=\frac{1}{4}\left( 10(ad-bc)+3a^2+3b^2+3c^2+3d^2 \right).
$$
and
$$
T=\frac{1}{8}\left( (a+d)(a-d)^2-2bc(a+d)+ac^2+db^2-3dc^2-3ab^2 \right).
$$

\begin{lemma}\label{lemma:ProjectiveChange}
By a projective change of coordinates, we may take $a+d=0$.
\end{lemma}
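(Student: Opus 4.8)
The plan is to achieve the normalization through a rotation of the planar coordinate, which I will show acts on the relevant data by a single phase factor. First I would pass to the complex description used in Section~\ref{sec:IndexE}. Writing $A_1+iB_1=(ax+by)+i(cx+dy)$ and substituting $x=\tfrac{z+\bar z}{2}$, $y=\tfrac{z-\bar z}{2i}$ with $z=x+iy$, one gets $A_1+iB_1=\alpha z+\beta\bar z$, where $\alpha=\tfrac12\left[(a+d)+i(c-b)\right]$ and $\beta=\tfrac12\left[(a-d)+i(b+c)\right]$. In particular $a+d=2\,\mathrm{Re}(\alpha)$, so the lemma is equivalent to the assertion that an admissible change of coordinates can make $\alpha$ purely imaginary. (As a consistency check, $|\alpha|^2-|\beta|^2=ad-bc$, so simplicity means exactly $|\alpha|\neq|\beta|$.)

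Next I would identify an admissible one-parameter family of coordinate changes. A rotation $z=e^{i\psi}Z$ of the plane (with $Z=X+iY$), extended by the identity in the graphing direction, is a linear map of $\R^3$ of determinant $1$. It preserves $\tfrac12(x^2+y^2)$ and carries a function with vanishing cubic part to another such function, so $M$ retains the normalized form \eqref{eq:general4} in the coordinates $(X,Y)$, with new coefficients $a',b',c',d'$. Being equi-affine, and in particular projective, the map preserves the cubic form $\omega=(A+iB)dz^3+(A-iB)d\bar z^3$ as a tensor. Since $dz^3=e^{3i\psi}dZ^3$ and $\alpha z+\beta\bar z=\alpha e^{i\psi}Z+\beta e^{-i\psi}\bar Z$, the linear part of the coefficient of $dZ^3$ becomes $\alpha e^{4i\psi}Z+\beta e^{2i\psi}\bar Z$. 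Hence the rotation acts by $\alpha\mapsto e^{4i\psi}\alpha$ and $\beta\mapsto e^{2i\psi}\beta$.

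Finally I would conclude. If $\alpha=0$ then already $a+d=0$. If $\alpha\neq0$, write $\alpha=|\alpha|e^{i\theta_0}$ and choose $\psi$ with $\theta_0+4\psi\equiv\pi/2\pmod{\pi}$; then $e^{4i\psi}\alpha$ is purely imaginary, so $a'+d'=2\,\mathrm{Re}(e^{4i\psi}\alpha)=0$, as required.

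I expect the only real work to be the transformation rule $\alpha\mapsto e^{4i\psi}\alpha$. The clean route is the tensorial argument above, using that the horizontal rotation lies in $SL(3,\R)$ and therefore preserves the equi-affine cubic form exactly, with no conformal factor to track. Should one prefer not to invoke invariance, the same rule can be verified by computing directly how the quartic coefficients $a_{ij}$ in \eqref{eq:general4}, and hence $a,b,c,d$, transform under the rotation; this is elementary but more tedious, and it is the step where care with the bookkeeping is needed.
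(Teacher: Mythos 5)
Your proof is correct, but it takes a genuinely different route from the paper's. The paper's one-line proof invokes the non-affine projective map $(X,Y,Z)=\frac{1}{1+\lambda z}(x,y,z)$, which keeps $M$ in the normal form \eqref{eq:general4} and adds $\frac{\lambda}{4}(X^2+Y^2)^2$ to the quartic part, i.e. $a_{40}\mapsto a_{40}+6\lambda$, $a_{22}\mapsto a_{22}+2\lambda$, $a_{04}\mapsto a_{04}+6\lambda$. You instead use a Euclidean rotation about the graphing direction; since this lies in $SL(3,\R)$ it preserves the Blaschke cubic form exactly, and your bookkeeping $\alpha\mapsto e^{4i\psi}\alpha$, $\beta\mapsto e^{2i\psi}\beta$ for $\alpha=\frac12[(a+d)+i(c-b)]$, $\beta=\frac12[(a-d)+i(b+c)]$ is right, so choosing $\psi$ with $\mathrm{Re}(e^{4i\psi}\alpha)=0$ gives $a'+d'=0$. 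Your route in fact buys more than elegance: because $a=a_{40}-3a_{22}$ and $d=a_{04}-3a_{22}$, adding a multiple of $(x^2+y^2)^2$ to the quartic $h$ changes neither $h_{xxx}-3h_{xyy}$ nor $h_{yyy}-3h_{yxx}$, so the paper's substitution leaves $a,b,c,d$ individually unchanged; it normalizes $a_{22}=0$ (which is how the lemma is actually used in Section 4) but does not by itself produce $a+d=0$. The rotation is what delivers the normalization stated in the lemma, and it can be composed with the paper's map and with a conformal rescaling (which multiplies $\alpha$ and $\beta$ by a common positive factor and so preserves $\mathrm{Re}(\alpha)=0$) to obtain the further normalizations used in Section 3.3. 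The one point worth making explicit in your write-up is the (easy) verification that the rotated surface is again in the form \eqref{eq:general4}, so that the coefficients $a',b',c',d'$ are read off from the same formulas; you assert this correctly via invariance of $\frac12(x^2+y^2)$ and of the vanishing cubic jet.
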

\begin{proof}
Consider a projective transformation of the form 
$$
(X,Y,Z)=\frac{1}{1+\lambda z}(x,y,z)
$$
with an adequate $\lambda$.
\end{proof}

Since we are only interested in the conformal class of the cubic form, we may take $b-c=1$. Assuming $d=-a$, 
$c=h-\frac{1}{2}$, $b=h+\frac{1}{2}$ we obtain 
$$
\Delta=27a^2h^2-(1-a^2-h^2)^3;\ \ \delta=\frac{1}{4}-a^2-h^2.
$$
On the other hand, $P$ and $Q$ have common roots if and only $\delta=ad-bc=0$.

The curve $\Delta=0$ is an astroid (full line in Figure \ref{Fig:Astroid}) and $\delta=0$ is a circle tangent to the astroid
(dashed line in Figure \ref{Fig:Astroid}). This astroid appears in \cite{Porteous}. Note that the type of singularity is constant 
on the connected components of the complement of $\Delta=0$ (section \ref{sec:PhaseE}), while
the index is constant on the connected
components of the complement of $\delta=0$ (section \ref{sec:IndexE})

\begin{figure}[htb]
\includegraphics[width=.4
\linewidth,clip =false]{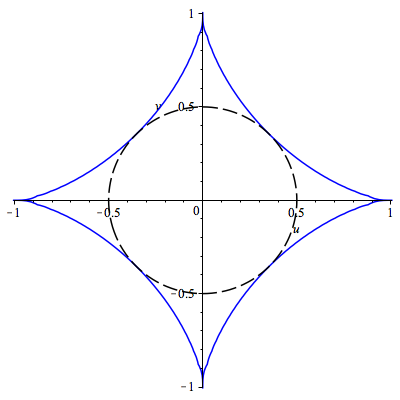}
\caption{The space of parameters $(a,h)$.}
\label{Fig:Astroid}
\end{figure}

\begin{Proposition}
We have that 
\begin{itemize}
\item
If $(a,h)$ is outside the astroid, then $P$ has $2$ roots and the index is $1/3$.
\item
If $(a,h)$ is inside the astroid, the $P$ has $4$ roots:
\begin{itemize}
\item
If $(a,h)$ is outside the dashed circle, then the index is $1/3$.
\item
If $(a,h)$ is inside the dashed circle, then the index is $-1/3$.
\end{itemize}
\end{itemize}
\end{Proposition}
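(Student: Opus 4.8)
Looking at this proposition, I need to prove claims about the number of roots of $P$ and the index of the singularity, based on the parameter regions defined by the astroid $\Delta=0$ and the circle $\delta=0$.

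Let me think about the structure. The setup has normalized to $a+d=0$, $b-c=1$, with $c=h-1/2$, $b=h+1/2$. So the parameters are $(a,h)$.

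Key facts available:
- The index is $I = 1 - \frac{\deg(P+iQ)}{3}$
- $\Delta = S^3 - 27T^2$ is the discriminant of $P$, vanishing iff $P$ has double roots
- $\delta = ad-bc$, and $P,Q$ have common roots iff $\delta=0$
- Index is constant on components of complement of $\delta=0$
- Type of singularity (number of roots) constant on components of complement of $\Delta=0$

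The index formula $I = 1 - \frac{\deg(P+iQ)}{3}$. Recall $A_1+iB_1 = (ax+by)+i(cx+dy)$. The degree of $A_1+iB_1$ on a small circle... $A_1+iB_1$ is a linear map. Its degree (winding number) is $+1$ if $\det > 0$ (i.e., $ad-bc>0$) and $-1$ if $ad-bc<0$. So $\deg(A_1+iB_1) = \text{sign}(\delta)$.

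Then $\deg(P+iQ) = \deg(A_1+iB_1)+3 = \text{sign}(\delta)+3$.

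So:
- If $\delta > 0$: $\deg(P+iQ)=4$, index $= 1 - 4/3 = -1/3$
- If $\delta < 0$: $\deg(P+iQ)=2$, index $= 1 - 2/3 = 1/3$

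Now $\delta = ad-bc = -a^2 - (h^2-1/4) = 1/4 - a^2 - h^2$. So $\delta>0$ inside the circle, $\delta<0$ outside. This matches the proposition's index claims!

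Now for the root count. $P$ is a degree-4 binary form, so it has up to 4 real roots (as directions, i.e., roots of $P(\cos t, \sin t)=0$, but really roots in $\mathbb{P}^1$). The discriminant $\Delta$ tells when there are multiple roots. The number of real roots changes only across $\Delta=0$.

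Outside astroid: 2 real roots. Inside astroid: 4 real roots. This is determined by checking one point in each region.

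Here's my proof plan:

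The proof strategy is to separately establish the index count and the root count, using the two invariant facts already proved: that the index is constant on connected components of the complement of $\delta=0$, and that the number of roots of $P$ is constant on connected components of the complement of $\Delta=0$.

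For the index, I use the fact that $\deg(A_1+iB_1)$ equals the sign of the determinant $ad-bc=\delta$ of the linear map $(x,y)\mapsto(ax+by,cx+dy)$, since this is precisely the winding number of a linear isomorphism around the origin. Combined with $\deg(P+iQ)=\deg(A_1+iB_1)+3$ and the index formula $I=1-\deg(P+iQ)/3$, this gives $I=1/3$ when $\delta<0$ (outside the dashed circle) and $I=-1/3$ when $\delta>0$ (inside the dashed circle).

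For the root count, I invoke constancy of the number of roots on each component of $\{\Delta\neq0\}$ and evaluate at representative points; the main subtlety is confirming the topology of the parameter plane relative to the astroid and circle.

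=== PROOF PROPOSAL ===

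The plan is to treat the two assertions separately, since the index and the number of roots are governed by the two different invariant curves $\delta=0$ and $\Delta=0$, and both quantities are locally constant off their respective curves by the remarks preceding the statement.

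First I would settle the index. Recall from Section \ref{sec:IndexE} that $A_1+iB_1=(ax+by)+i(cx+dy)$ is the complex form of the linear map $L\colon(x,y)\mapsto(ax+by,\,cx+dy)$, whose Jacobian determinant is exactly $ad-bc=\delta$. Under the present normalization $d=-a$, $c=h-\tfrac12$, $b=h+\tfrac12$, one computes $\delta=\tfrac14-a^2-h^2$, so $\delta>0$ holds precisely inside the dashed circle and $\delta<0$ outside it. Since $L$ is a linear isomorphism whenever $\delta\neq0$, the restriction of $A_1+iB_1$ to the small circle $|z|=r$ is a loop winding once around the origin, with orientation given by the sign of $\delta$; hence $\deg(A_1+iB_1)=\operatorname{sign}(\delta)$. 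Combining this with the identity $\deg(P+iQ)=\deg(A_1+iB_1)+3$ established just before the statement, and with the index formula $I=1-\tfrac{1}{3}\deg(P+iQ)$, I obtain $\deg(P+iQ)=2$ and $I=\tfrac13$ when $\delta<0$, and $\deg(P+iQ)=4$ and $I=-\tfrac13$ when $\delta>0$. This already yields the index in all three cases of the statement: outside the astroid and in the annular region between the circle and the astroid one has $\delta<0$ and index $\tfrac13$, while strictly inside the dashed circle one has $\delta>0$ and index $-\tfrac13$.

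Next I would count the real roots of $P$. Here $P(\cos t,\sin t)$ is a binary quartic form, so it has at most four roots in $\mathbb{P}^1$, and by the remark of Section \ref{sec:PhaseE} the number of roots is constant on each connected component of the complement of $\{\Delta=0\}$. Since $\Delta=S^3-27T^2$ with $S>0$ (as $S=\tfrac14(10\delta+3a^2+3b^2+3c^2+3d^2)$ is a sum dominated by positive terms in the relevant range, or can be checked directly), the curve $\Delta=0$ is the astroid depicted in Figure \ref{Fig:Astroid}, separating the parameter plane into the bounded interior and the unbounded exterior. It then suffices to evaluate the number of roots of $P$ at one representative point of each component: a convenient exterior point such as large $(a,h)$ and a convenient interior point such as $(a,h)=(0,0)$. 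At these sample points I would count the sign changes of $P(\cos t,\sin t)$ over a period directly, obtaining two roots outside the astroid and four roots inside, which is exactly the dichotomy in the statement.

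The main obstacle is the bookkeeping in the root count: I must verify that the astroid genuinely bounds a single interior component and that the dashed circle $\delta=0$ lies inside the astroid and is tangent to it, so that the three regions of the statement are correctly nested (exterior of astroid; between astroid and circle; interior of circle). This requires confirming the inclusion of $\{\delta=0\}$ inside $\{\Delta\le0\}$ by the relation between $\Delta$ and $\delta$ in the normalized coordinates, and checking the tangency claim from the explicit formulas $\Delta=27a^2h^2-(1-a^2-h^2)^3$ and $\delta=\tfrac14-a^2-h^2$. Once the nesting and tangency are confirmed and the two sample evaluations of $P$ are carried out, the index computation from the first paragraph attaches the correct index value to each region, completing the proof.
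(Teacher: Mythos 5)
Your proposal is correct and follows exactly the route the paper intends (the paper states this Proposition without a written proof, relying on the preceding remarks that the index is locally constant off $\delta=0$ and the root count off $\Delta=0$); in particular your index argument via $\deg(A_1+iB_1)=\mathrm{sign}(\delta)$, $\deg(P+iQ)=\deg(A_1+iB_1)+3$ and $I=1-\tfrac{1}{3}\deg(P+iQ)$ is complete. The only step you defer, the sample-point evaluations for the root count, does go through: at $(a,h)=(0,0)$ one finds $P=2xy(x^2-y^2)$ with four real roots, while at $(a,h)=(a,0)$ with $a$ large one finds $P=(x^2-y^2)\left(a(x^2+y^2)+2xy\right)$ with exactly two, and the circle $a^2+h^2=\tfrac14$ is indeed inscribed in and tangent to the astroid, so the three regions are nested as claimed.
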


It is proved in \cite{Ballesteros} that the phase portraits of a simple singularity is of type $D_1$, $D_2$ or $D_3$. 
It is clear that $D_1$ corresponds to points outside the astroid, $D_2$ to points inside the astroid but outside the circle,
and $D_3$ to points inside the circle.

\section{Semi-homogeneous elliptic cubic forms}

\subsection{Definition}

Let $\omega$ be a cubic form and $p$ an isolated singularity of $\omega$. Denote by $\omega_n$
the order $n$-jet of $\omega$ at $p$. A cubic form is called  {\it semi-homogeneous} (of degree $n$) if
$\omega_k=0$, $k=1...,n-1$, and $\omega_{n}$ has $p$ as an isolated singularity (see \cite{Ballesteros}). Observe
that for $n=1$ this definition reduces to the definition of a simple singularity.

The complex function $A+iB=C_{111}+iC_{222}$ can be approximated by $A_n+iB_n=C_{111}^{n}+iC_{222}^{n}$, where $C_{ijk}^n$ denotes
the $n$-jet of $C_{ijk}$. More precisely
$$
A+iB=A_{n}+iB_n+O_{n+1}(z,\bar{z}).
$$ 
The semi-homogeneity assumption means that $A_k+iB_k=0$, $k=1,...,n-1,$ and $A_n+iB_n=0$ only for $z=\bar{z}=0$. 

\begin{lemma}\label{lemma:IndexHomogeneous}
For a semi-homogeneous cubic form $\omega$ of degree $n$, the index of $\omega$ 
is given by 
$$
I=-\frac{\deg(A_n+iB_n)}{3}.
$$
\end{lemma}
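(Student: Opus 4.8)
The plan is to reduce the statement to the index formula \eqref{eq:IndexAB}, which already gives $I=-\deg(A+iB)/3$, and then to show that the semi-homogeneity hypothesis forces $\deg(A+iB)=\deg(A_n+iB_n)$. This is the exact analogue of Lemma \ref{lemma:IndexSimple}, where the linear part $A_1+iB_1$ played the role now taken by the homogeneous degree-$n$ part $A_n+iB_n$; so I would follow the same Rouch\'e-type comparison on a small circle $|z|=r$.

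First I would record that, because $\omega_k=0$ for $k=1,\ldots,n-1$, the $n$-jet $A_n+iB_n$ is in fact a homogeneous polynomial of degree $n$ in $(x,y)$, and by assumption it vanishes only at the origin. Restricting to the unit circle, $|A_n+iB_n|$ attains a positive minimum $c>0$ by compactness, so on $|z|=r$ the homogeneity gives the lower bound $\|A_n+iB_n\|\geq c\,r^n$. On the other hand, the remainder satisfies $A+iB-(A_n+iB_n)=O_{n+1}(z,\bar{z})$, hence is bounded by $C\,r^{n+1}$ on the same circle.

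Comparing these two estimates is the crux of the argument: for $r$ small enough that $C r\leq c/2$, we obtain
$$
\|(A+iB)-(A_n+iB_n)\|\leq C\,r^{n+1}\leq \frac{1}{2}c\,r^{n}\leq \frac{1}{2}\|A_n+iB_n\|
$$
on $|z|=r$. As in the proof of Lemma \ref{lemma:IndexSimple}, the strict inequality $\|f-g\|<\|g\|$ along the circle means $f=A+iB$ and $g=A_n+iB_n$ are homotopic through maps into $\mathbb{C}\setminus\{0\}$, since the linear homotopy $g+s(f-g)$, $s\in[0,1]$, never vanishes; hence their winding numbers about the origin, i.e. their degrees, coincide. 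Substituting $\deg(A+iB)=\deg(A_n+iB_n)$ into \eqref{eq:IndexAB} yields the claimed formula.

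The one genuinely new point compared with the simple case is verifying the lower bound $\|A_n+iB_n\|\geq c\,r^n$, which is where homogeneity of degree $n$ and the isolated-zero hypothesis enter. Once the two growth rates $r^n$ and $r^{n+1}$ are separated, the rest is the standard degree-homotopy argument and poses no real obstacle.
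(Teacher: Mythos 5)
Your proposal is correct and follows exactly the route the paper takes: the paper's proof simply says that ``the same argument as in Lemma \ref{lemma:IndexSimple}'' gives $\deg(A_n+iB_n)=\deg(A+iB)$ and then invokes formula \eqref{eq:IndexAB}. You have merely filled in the detail the paper leaves implicit, namely the lower bound $\|A_n+iB_n\|\geq c\,r^n$ from homogeneity and the isolated-zero hypothesis versus the $O(r^{n+1})$ remainder, which is the right justification.
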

\begin{proof}
The same argument as in Lemma \ref{lemma:IndexSimple} shows that $\deg(A_n+iB_n)=\deg(A+iB)$. Now the result follows from formula
\eqref{eq:IndexAB}. 
\end{proof}

\subsection{Local model of a surface with semi-homogeneous cubic form}

Assume that $f$ is given by equation \eqref{eq:general4}. 
By a projective change of coordinates (see Lemma \ref{lemma:ProjectiveChange}), 
we may assume $a_{22}=0$ and write
\begin{equation}\label{eq:1}
f(x,y)=\frac{1}{2}(x^2+y^2)+\frac{1}{24}\left( a_{40}x^4+4a_{31}x^3y+4a_{13}xy^3+a_{04}y^4\right)+O_5(x,y).
\end{equation}

\begin{lemma}\label{lemma:2}
Assume that $f$ is given by \eqref{eq:1} and the $1$-jet of the cubic form is zero. Then  
\begin{equation}\label{eq:fn}
f(x,y)=\frac{1}{2}(x^2+y^2)+O_5(x,y).
\end{equation}
\end{lemma}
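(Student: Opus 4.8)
The plan is to translate the hypothesis directly into vanishing conditions on the quartic coefficients of $f$ and then to observe that these conditions are strong enough to annihilate the entire quartic part. First I would recall from \eqref{eq:CubicFormElliptic} that the $1$-jet $\omega_1$ of the cubic form is governed by the four numbers
$$
a=a_{40}-3a_{22},\quad b=a_{31}-3a_{13},\quad c=a_{13}-3a_{31},\quad d=a_{04}-3a_{22}.
$$
Since $f$ is normalized as in \eqref{eq:1} with $a_{22}=0$, these reduce to $a=a_{40}$ and $d=a_{04}$, while $b$ and $c$ are unchanged. In complex notation the $1$-jet is $A_1+iB_1=(ax+by)+i(cx+dy)$, so the assumption that the $1$-jet of $\omega$ vanishes is exactly the statement $a=b=c=d=0$.

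Next I would read off the consequences coefficient by coefficient. The conditions $a=0$ and $d=0$ give at once $a_{40}=0$ and $a_{04}=0$. The remaining conditions $b=0$ and $c=0$ form the linear system $a_{31}-3a_{13}=0$ and $a_{13}-3a_{31}=0$ in the unknowns $(a_{31},a_{13})$. The decisive point is that its coefficient determinant is $1\cdot 1-(-3)(-3)=-8\neq 0$, so the only solution is $a_{31}=a_{13}=0$.

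Combining these with the assumption $a_{22}=0$, every fourth-order coefficient of $f$ vanishes, and \eqref{eq:1} collapses to $f(x,y)=\frac{1}{2}(x^2+y^2)+O_5(x,y)$, which is \eqref{eq:fn}. I do not expect a genuine obstacle here: the whole argument amounts to the fact that the linear map sending the quartic coefficients to $(a,b,c,d)$ is injective on the slice $a_{22}=0$, as witnessed by the nonvanishing $2\times 2$ determinant above. The only point calling for a little care is the correct bookkeeping of the factors of $3$ in the definitions of $b$ and $c$ (and in the $a_{22}$ normalization), but once \eqref{eq:1} has set $a_{22}=0$ this is entirely routine.
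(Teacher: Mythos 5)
Your proposal is correct and follows exactly the paper's route: identify the $1$-jet coefficients $a=a_{40}$, $b=a_{31}-3a_{13}$, $c=a_{13}-3a_{31}$, $d=a_{04}$ under the normalization $a_{22}=0$, set them to zero, and conclude that all quartic coefficients vanish. The only difference is that you make explicit the $2\times 2$ determinant $1-9=-8\neq 0$ forcing $a_{31}=a_{13}=0$, a step the paper leaves implicit.
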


\begin{proof}
From Appendix \ref{app:CubicOrdern} we have that 
\begin{equation*}
\omega_1=(ax+by)(dx^3-3dxdy^2)+(cx+dy)(dy^3-3dydx^2),
\end{equation*}
where
\begin{equation*}
a=a_{40},\ b=a_{31}-3a_{13},\ c=a_{13}-3a_{31}\ d=a_{04}.
\end{equation*}
Since, by hypothesis, $a=b=c=d=0$, we conclude that $a_{40}=a_{04}=a_{31}=a_{13}=0$.
\end{proof}

\begin{lemma}\label{lemma:h}
Assume that $f$ is given by \eqref{eq:1} and the $(n-1)$-jet of the cubic form is zero. Then necessarily
\begin{equation}\label{eq:fn}
f(x,y)=\frac{1}{2}(x^2+y^2)+h(x,y)+O_{n+4}(x,y),
\end{equation}
where $h=h_{n+3}$ is a homogeneous polynomial of degree $n+3$.
\end{lemma}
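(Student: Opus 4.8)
The plan is to argue by induction on $n$, exploiting the triangular way in which the cubic form depends on $f$ (as made explicit in Appendix A): the degree-$k$ part $\omega_k$ of the cubic form is determined by the Taylor coefficients of $f$ up to order $k+3$, and its dependence on the top order $f_{k+3}$ is linear. The base cases $n=1$ and $n=2$ are immediate. For $n=1$ the assertion is just the normal form \eqref{eq:1} with $h=h_4$ the quartic part, and for $n=2$ it is precisely Lemma \ref{lemma:2}, which shows that the vanishing of the $1$-jet $\omega_1$ forces the quartic part of $f$ to vanish.

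For the inductive step I would assume the statement for $n-1$. Since the hypothesis that the $(n-1)$-jet of the cubic form vanishes contains the vanishing of its $(n-2)$-jet, the inductive hypothesis gives
$$
f(x,y)=\frac{1}{2}(x^2+y^2)+f_{n+2}(x,y)+O_{n+3}(x,y),
$$
where $f_{n+2}$ is homogeneous of degree $n+2$; equivalently, all homogeneous parts of $f$ of degrees $4,\dots,n+1$ already vanish. It then remains to show that the extra information $\omega_{n-1}=0$ forces $f_{n+2}=0$, for then $f=\frac{1}{2}(x^2+y^2)+O_{n+3}$ and we may take $h=h_{n+3}$ to be the degree-$(n+3)$ part.

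To extract $\omega_{n-1}$ I would substitute $f=\frac{1}{2}(x^2+y^2)+f_{n+2}+O_{n+3}$ into the formulas of Appendix A. With $g_{ij}=f_{ij}/|\det(f_{ab})|^{1/4}$ and $C=\nabla g$, the deviation $g_{ij}-\delta_{ij}$ is of order $n$, and its derivatives together with the induced Christoffel symbols contribute to $C$ in degree $n-1$ only through terms that are linear in $f_{n+2}$; the determinantal corrections and the Christoffel-times-deviation terms are quadratic in $f_{n+2}$ and hence of order $\geq 2n-1$. Because the apolarity of the Pick form cancels all trace parts, in degree $n-1$ the cubic form reduces, up to a nonzero constant factor, to $\omega_{n-1}=L(f_{n+2})$ with $A_{n-1}+iB_{n-1}\propto\partial_z^3 f_{n+2}$ in the complex notation of Section \ref{sec:IndexE} --- exactly the linear operator that produced $\omega_1$ from the quartic part in Lemma \ref{lemma:2}.

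The heart of the argument, and the step I expect to require the most care, is the injectivity of $L$ on real homogeneous polynomials of degree $m=n+2\geq 5$. Writing $f_{n+2}=\sum_{j=0}^{m}c_j\,z^{m-j}\bar z^{\,j}$ with the reality constraint $c_j=\overline{c_{m-j}}$, the condition $\partial_z^3 f_{n+2}=0$ forces $c_j=0$ for every $j\leq m-3$, and the reality constraint propagates this to $c_j=0$ for every $j\geq 3$. For $m\geq 5$ these two ranges cover all indices $0,\dots,m$, so $f_{n+2}=0$; the only surviving kernel direction, the multiple of $z^2\bar z^{2}=(x^2+y^2)^2$, occurs precisely in degree $m=4$, which is exactly why the projective normalization $a_{22}=0$ was needed only in the base case. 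Thus $\omega_{n-1}=0$ implies $f_{n+2}=0$, completing the induction. The two points that must be checked carefully are that the nonlinear terms genuinely drop out of degree $n-1$, so that $\omega_{n-1}$ equals $L(f_{n+2})$ on the nose, and that the constant relating $\omega_{n-1}$ to $\partial_z^3 f_{n+2}$ is nonzero; both follow from the explicit expansion recorded in Appendix A.
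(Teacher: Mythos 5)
Your proof is correct and follows essentially the same route as the paper's: induction on $n$ with base case Lemma~\ref{lemma:2}, with the inductive step reduced (via the Appendix A expansion, which shows the degree-$(n-1)$ part of the cubic form depends only linearly on $h_{n+2}$) to the injectivity of $h\mapsto(h_{xxx}-3h_{xyy},\,h_{yyy}-3h_{xxy})$ on real homogeneous polynomials of degree $n+2\geq 5$. The paper dismisses that injectivity as ``straightforward''; your explicit check via $\partial_z^3$ together with the reality constraint $c_j=\overline{c_{m-j}}$, isolating the kernel direction $(x^2+y^2)^2$ that occurs only in degree $4$, is a correct and useful filling-in of that step.
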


\begin{proof}
Induction. The case $n=2$ was proved in Lemma \ref{lemma:2}. For $n\geq 3$, observe that 
the $(n-1)$-jet of the cubic form has $2n$ coefficients which are linear combinations
of the $n+3$ coefficients of $h_{n+2}$, as proved in Appendix \ref{app:CubicOrdern}. We have only to verify that 
this linear map is injective, which is straightforward.
\end{proof}

Lemma \ref{lemma:h} says that if the cubic form is homogeneous of degree $n$, the surface is the graph 
of a function $f$ given by formula \eqref{eq:fn}. Then Appendix \ref{app:CubicOrdern} says that 
$$
\omega_n=A_n(dx^3-3dxdy^2)+B_n(dy^3-3dydx^2),
$$
where
\begin{equation}\label{eq:ThirdDerivatives}
A_n+iB_n=h_{xxx}-3h_{xyy}+i(h_{yyy}-3h_{yxx}).
\end{equation}
The semi-homogeneity hypothesis states that this function has $(0,0)$ as an isolated singularity.

\subsection{Index of a semi-homogeneous cubic form}

\begin{thm} \label{thm:Homogeneous}
Consider an isolated singularity $p$ of the cubic form in the elliptic region of a surface. Assume that the cubic form 
at $p$ is semi-homogeneous. Then the index of the $3$-web is at most $1$.
\end{thm}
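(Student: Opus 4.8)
The plan is to reduce the index of the $3$-web to the winding number of a single complex-valued function that is literally a third Wirtinger derivative of a \emph{real} function, and then to invoke Loewner's conjecture. The bookkeeping is routine; the only real content is the appeal to Loewner.

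First I would apply Lemma~\ref{lemma:IndexHomogeneous}, which already gives $I=-\frac{1}{3}\deg(A_n+iB_n)$, so the whole problem is reduced to computing the winding number $\deg(A_n+iB_n)$ of the leading term on a small circle. Next, using the normal form of Lemma~\ref{lemma:h} together with formula \eqref{eq:ThirdDerivatives}, I would write $A_n+iB_n=(h_{xxx}-3h_{xyy})+i(h_{yyy}-3h_{yxx})$, where $h=h_{n+3}$ is a \emph{real} homogeneous polynomial. The decisive observation is that this is exactly a third derivative in the holomorphic variable: writing $z=x+iy$ and $\partial_z=\frac{1}{2}(\partial_x-i\partial_y)$, a direct expansion of $(\partial_x-i\partial_y)^3$ shows
$$
A_n+iB_n=(\partial_x-i\partial_y)^3 h=8\,\partial_z^{3}h .
$$

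Since $h$ is real, conjugation gives $\overline{A_n+iB_n}=8\,\partial_{\bar z}^{3}h$, and because the winding number changes sign under conjugation we obtain $\deg(A_n+iB_n)=-\deg(\partial_{\bar z}^{3}h)$. Substituting into the formula of Lemma~\ref{lemma:IndexHomogeneous} yields the clean identity
$$
I=\frac{1}{3}\deg\!\left(\partial_{\bar z}^{3}h\right),
$$
where, by the semi-homogeneity hypothesis, $\partial_{\bar z}^{3}h$ has an isolated zero at the origin. At this point Loewner's conjecture enters: in its generalized form it asserts that for a smooth real function $h$ the index of an isolated zero of $\partial_{\bar z}^{n}h$ is at most $n$. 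Applying it with $n=3$ gives $\deg(\partial_{\bar z}^{3}h)\le 3$, hence $I\le 1$.

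The main obstacle is precisely this last step, the appeal to Loewner's conjecture, which is deep and in general open. What makes the appeal legitimate here is that semi-homogeneity forces $A_n+iB_n$ to be the \emph{pure} third derivative $8\,\partial_z^{3}h$ of a single real homogeneous polynomial possessing an isolated zero, so that only the polynomial (analytic) case of the conjecture is needed; I would therefore phrase the final step as an application of the generalized Loewner conjecture to $h$. I would also remark that the bound is sharp: for the rotationally symmetric model $h=(x^2+y^2)^{3}$ one has $\partial_{\bar z}^{3}h=6z^{3}$, whose winding number is $3$, giving $I=1$.
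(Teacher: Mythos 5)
Your proposal is correct and follows essentially the same route as the paper: reduce to $\deg(A_n+iB_n)$ via Lemma~\ref{lemma:IndexHomogeneous}, identify $A_n+iB_n$ (up to the harmless factor $8$) with the conjugate of $\partial^3 h/\partial\bar z^3$ for the real homogeneous polynomial $h$ of Lemma~\ref{lemma:h}, and apply Loewner's conjecture to bound that winding number by $3$. Your write-up is merely more explicit about the Wirtinger bookkeeping and adds the sharpness example $h=(x^2+y^2)^3$; no substantive difference.
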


\begin{proof}
The index of $p$ coincides with the degree of $A_n+iB_n$ which is given by equation \eqref{eq:ThirdDerivatives}.
Since
$$
\frac{\partial^3 h}{\partial{\bar{z}^3}}=h_{xxx}-3h_{xyy}-i(h_{yyy}-3h_{yxx})
$$
Loewner's conjecture (\cite{Titus}) implies that the degree of $A_n+iB_n$ is $\geq -3$. Then Lemma
\ref{lemma:IndexHomogeneous} implies that the index of the $3$-web is at most $1$. 
\end{proof}

A natural conjecture here is that the index of the cubic form is always $\leq 1$, even without the semi-homogeneity
hypothesis.

\section{Quadratic points in the hyperbolic region}

Assume that
$M$ is the graph of $f:U\subset\R^2\to\R$ given by 
\begin{equation}\label{eq:NormalSimpleHyp}
f(x,y)=xy+\frac{1}{24}\left( ax^4+4bx^3y+6ex^2y^2+4cxy^3+dy^4 \right)+O_5(x,y).
\end{equation}
Then the cubic form is given by
\begin{equation}\label{eq:CubicFormHyp1}
\omega= \omega_1+O_2(x,y),
\end{equation}
where 
\begin{equation}\label{eq:CubicFormHyp}
\omega_1= (ax+by)dx^3+(cx+dy)dy^3,
\end{equation}
(see Appendix \ref{app:CubicOrdern}). We say that the isolated singularity $(0,0)$ of $\omega$ is {\it simple} if it is also an isolated
singularity of $\omega_1$, which is equivalent to
\begin{equation}\label{Hyp:1}
\delta=ad-bc\neq 0.
\end{equation}
We observe that for a generic surface $M$, all quadratic points are simple.

\subsection{Index of simple singularities}\label{sec:IndexH}

Denote by $X$ a vector field in the null direction of $\omega$.
It follows from equation \eqref{eq:CubicFormHyp1} that we can find a vector field $X_1$ in the null direction of $\omega_1$
such that
$$
X-X_1=O_2(x,y).
$$
Then the same argument as in the proof of Lemma \ref{lemma:IndexSimple} shows that 
the index of $X$ is equal to the index of $X_1$, which is equivalent to say that the indices of $\omega$ and $\omega_1$ 
coincide. Observe that the index of $\omega_1$ varies continuously with the parameters $(a,b,c,d)$ in the set $\delta\neq 0$, and
thus it is constant in each connected component of this set. 

Define the {\it characteristic polynomial}
\begin{equation}\label{DefineCharPol}
P(x,y)=(ax+by)x^3+(cx+dy)y^3
\end{equation}
and let 
\begin{equation}\label{eq:DefineQ}
Q(x,y)=xy\left((ax+by)x-(cx+dy)y\right).
\end{equation}

\begin{lemma}\label{lemma:CommonRoots}
For simple singularities satisfying $ad\neq 0$, $P$ and $Q$ have no common roots. 
\end{lemma}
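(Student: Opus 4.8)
The plan is to argue by contradiction: assume some nonzero \emph{real} $(x,y)$ is a common root of $P$ and $Q$, and derive that either $ad=0$ or $\delta=ad-bc=0$. I read ``common roots'' as real common directions, consistently with the elliptic case, where the roots of $P$ are precisely the real directions $t$ appearing in the polar blow-up.

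First I would eliminate the coordinate axes. From \eqref{DefineCharPol} one has $P(0,y)=dy^{4}$ and $P(x,0)=ax^{4}$, so under the hypothesis $ad\neq 0$ any common root must have $x\neq 0$ and $y\neq 0$.

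The key step is a pair of elementary identities. Writing $u=(ax+by)x$ and $v=(cx+dy)y$, the definitions of $P$ and $Q$ give
$$
P=x^{2}u+y^{2}v,\qquad Q=xy\,(u-v).
$$
Since $xy\neq 0$, the vanishing of $Q$ forces $u=v$, and then the first identity becomes $P=(x^{2}+y^{2})\,u$. As $(x,y)$ is real and nonzero we have $x^{2}+y^{2}>0$, so $P=0$ yields $u=v=0$; that is, $ax+by=0$ and $cx+dy=0$ with $(x,y)\neq(0,0)$. The linear system with coefficients $a,b,c,d$ is therefore degenerate, giving $\delta=ad-bc=0$ and contradicting the simplicity hypothesis \eqref{Hyp:1}.

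Since each step is a one-line computation, I expect no serious obstacle; the only point requiring care is that the argument genuinely uses reality of the root. Over $\mathbb{C}$ the factor $x^{2}+y^{2}$ vanishes along $y=\pm i x$, and indeed when $a=-d$ and $b=c$ the two polynomials share the complex roots $(1,\pm i)$ even though $\delta=-(a^{2}+b^{2})\neq 0$. Hence the statement is necessarily about real common directions, exactly as in the elliptic discussion, and the hypothesis $ad\neq 0$ is used only to discard the axes in the first step.
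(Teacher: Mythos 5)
Your proof is correct and follows essentially the same route as the paper's: use $Q=0$ with $xy\neq 0$ to get $(ax+by)x=(cx+dy)y$, rewrite $P$ as $(x^2+y^2)(ax+by)x$, and conclude that both linear forms vanish, contradicting $ad-bc\neq 0$. The only (welcome) difference is that you explicitly dispose of the axes $x=0$ and $y=0$ using $ad\neq 0$, a step the paper leaves implicit.
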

\begin{proof}
If $(x,y)$ is a root of $Q$ with $x\neq 0$ and $y\neq 0$,  $(ax+by)x=(cx+dy)y$, and so
$$
P(x,y)=(ax+by)x(x^2+y^2).
$$
If $(x,y)$ is also a root of $P$,  $ax+by=0$, which implies $cx+dy=0$, thus contradicting hypothesis \eqref{Hyp:1}.
\end{proof}

\begin{lemma}
Denote $P(t)=P(\cos(t),\sin(t))$ and $Q(t)=Q(\cos(t),\sin(t))$. Up to a factor, we can write $\omega_1$ in polar coordinates as
\begin{equation}\label{eq:l3}
\omega_1= P(t)dr^3-3rQ(t)dr^2dt+ O_2(r)drdt^2+O_3(r)dt^3,
\end{equation}
where $O_n(r)$ denotes terms of order $\geq n$ in $r$.
\end{lemma}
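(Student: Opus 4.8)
The plan is to substitute the polar parametrization $x=r\cos t$, $y=r\sin t$ directly into $\omega_1$ and expand. Writing $C=\cos t$ and $S=\sin t$, the differentials pull back to
$$
dx=C\,dr-rS\,dt,\qquad dy=S\,dr+rC\,dt,
$$
while the two coefficient functions become $ax+by=r(aC+bS)$ and $cx+dy=r(cC+dS)$. The whole computation then reduces to cubing the two one-forms $dx$ and $dy$ and collecting the resulting monomials in $dr$ and $dt$.

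First I would expand the cubes of the two binomials, obtaining
$$
dx^3=C^3\,dr^3-3rC^2S\,dr^2dt+3r^2CS^2\,dr\,dt^2-r^3S^3\,dt^3,
$$
$$
dy^3=S^3\,dr^3+3rS^2C\,dr^2dt+3r^2SC^2\,dr\,dt^2+r^3C^3\,dt^3.
$$
Multiplying $dx^3$ by $r(aC+bS)$ and $dy^3$ by $r(cC+dS)$ and summing, the coefficient of $dr^3$ is $r\bigl[(aC+bS)C^3+(cC+dS)S^3\bigr]=r\,P(t)$ by the definition \eqref{DefineCharPol} of $P$. The coefficient of $dr^2dt$ is $-3r^2\bigl[(aC+bS)C^2S-(cC+dS)S^2C\bigr]=-3r^2\,CS\bigl[(aC+bS)C-(cC+dS)S\bigr]=-3r^2\,Q(t)$, where the bracket is exactly $Q(\cos t,\sin t)$ from \eqref{eq:DefineQ}.

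Finally I would observe that the remaining two coefficients, of $dr\,dt^2$ and $dt^3$, carry factors $r^3$ and $r^4$ respectively. Factoring out the common factor $r$ present in all four terms then yields precisely \eqref{eq:l3}, with the $dr^3$ coefficient becoming $P(t)$, the $dr^2dt$ coefficient becoming $-3rQ(t)$, the $dr\,dt^2$ coefficient becoming $O_2(r)$, and the $dt^3$ coefficient becoming $O_3(r)$. The computation is entirely mechanical; the only point requiring attention is to recognize, after factoring, that the $dr^2dt$ coefficient reproduces $Q$ with the correct sign and that the factor $r$ is indeed common to every term, so no genuine obstacle is expected beyond careful bookkeeping.
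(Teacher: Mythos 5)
Your proposal is correct and follows exactly the route the paper intends: the paper's proof is simply the remark ``straightforward calculations using $x=r\cos(t)$ and $y=r\sin(t)$,'' and your expansion of $dx^3$ and $dy^3$, the identification of the $dr^3$ coefficient with $rP(t)$ and of the $dr^2dt$ coefficient with $-3r^2Q(t)$, and the extraction of the common factor $r$ are precisely that computation carried out in full. No gaps.
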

\begin{proof}
Straightforward calculations using $x=r\cos(t)$ and $y=r\sin(t)$. 
\end{proof}

\begin{lemma}
Assume $ad\neq 0$ and denote $\bar{X}_1$ the vector field 
$$
\bar{X}_1=(3xQ-yP) \frac{\partial}{\partial x}  +(3yQ+xP)\frac{\partial}{\partial y}.
$$
Then 
$$
||\bar{X}_1-X_1||=O_2(r).
$$
\end{lemma}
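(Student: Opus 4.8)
The plan is to carry out the entire comparison in polar coordinates $(r,t)$, where the previous lemma has already put $\omega_1$ into the normal form
$$\omega_1 = P(t)\,dr^3 - 3rQ(t)\,dr^2dt + O_2(r)\,dr\,dt^2 + O_3(r)\,dt^3,$$
so that the leading behaviour is controlled by the two functions $P(t)$ and $Q(t)$ already in play. The idea is to transport $\bar X_1$ into these coordinates, read off its radial and angular coefficients, and match them against the expansion of the null field $X_1$.

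First I would transport $\bar X_1$ into polar coordinates. Writing $U=3xQ-yP$ and $V=3yQ+xP$ for its components, with $P=P(x,y)$ and $Q=Q(x,y)$ the homogeneous quartics, the two elementary identities $xU+yV=3Q(x^2+y^2)$ and $xV-yU=P(x^2+y^2)$ give $\bar X_1(r)=(xU+yV)/r=3rQ$ and $\bar X_1(t)=(xV-yU)/r^2=P$. Since $P$ and $Q$ are homogeneous of degree $4$, this becomes, in terms of $P(t)=P(\cos t,\sin t)$ and $Q(t)=Q(\cos t,\sin t)$,
$$\bar X_1 = 3r^5Q(t)\,\partial_r + r^4P(t)\,\partial_t.$$
A direct substitution then shows that $\bar X_1$ annihilates the leading truncation $P\,dr^3-3rQ\,dr^2dt$ of $\omega_1$, since $P(3r^5Q)^3-3rQ(3r^5Q)^2(r^4P)=0$; in particular, on the exceptional circle $r=0$ the field $\bar X_1$ vanishes precisely at the roots of $P(t)$.

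Next I would express the null field $X_1$ of $\omega_1$ in the same frame and expand it in powers of $r$, imposing $\omega_1(X_1)=0$ and using the normal form above. Here Lemma \ref{lemma:CommonRoots}, valid because $ad\neq0$, guarantees that $P$ and $Q$ have no common root, so that away from the roots of $P$ the relevant direction is well defined and depends smoothly on $r$. Once $X_1$ and $\bar X_1$ are normalised consistently, the discrepancy $\bar X_1-X_1$ is governed by exactly those terms in which $\bar X_1$ fails to annihilate the full $\omega_1$, namely the remainders $O_2(r)\,dr\,dt^2$ and $O_3(r)\,dt^3$; propagating these through the expansion should yield the estimate $\|\bar X_1-X_1\|=O_2(r)$.

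The step I expect to be the main obstacle is precisely this last comparison. Because the hyperbolic cubic $\omega_1$ possesses a single real null direction among its three directions, while $\bar X_1$ is the \emph{simple} factor of the truncated cubic $P\,dr^3-3rQ\,dr^2dt$, one must first identify which branch of the full equation $\bar X_1$ tracks, isolate the surviving real branch, and control its dependence on $r$. Only after this identification can one check that, with matched normalisations, the difference genuinely enters at order $r^2$ — coming solely from the $O_2(r)$ and $O_3(r)$ data of the normal form — rather than at a lower order. The non-degeneracy $ad\neq0$ together with the absence of common roots of $P$ and $Q$ from Lemma \ref{lemma:CommonRoots} is what keeps the branches separated and makes the resulting order estimate uniform in $t$.
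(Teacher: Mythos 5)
Your proposal is correct and follows essentially the same route as the paper: the paper's proof consists precisely of rewriting $\bar{X}_1$ in polar coordinates as (a positive multiple of) $3rQ(t)\,\partial_r+P(t)\,\partial_t$ and then invoking the polar normal form \eqref{eq:l3} of $\omega_1$, whose leading truncation $P\,dr^3-3rQ\,dr^2dt$ is annihilated by this field, so that the discrepancy with the true null field $X_1$ comes only from the $O_2(r)$ and $O_3(r)$ remainders. Your extra care about normalisation (the overall factor $r^4$) and about the uniqueness of the real null direction is consistent with the paper's "up to a factor" conventions and does not change the argument.
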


\begin{proof}
In polar coordinates, 
$$
\bar{X}_1=3rQ(t)\frac{\partial}{\partial r}+P(t)\frac{\partial}{\partial t}.
$$
Using formula \eqref{eq:l3}, we conclude the proof of the lemma.
\end{proof}

\begin{Proposition}\label{prop:Index1}
The index of the vector fields $X_1$ and $\bar{X}_1$ coincide.
\end{Proposition}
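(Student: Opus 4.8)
The plan is to compute both indices on a small circle $|z|=\epsilon$ as the degree of the normalized Gauss map, and to show these degrees agree by the same domination argument already used in the proof of Lemma \ref{lemma:IndexSimple}. The only genuine difficulty is the book-keeping of orders, which I would organize through the single complex quantity $3Q(t)+iP(t)$.

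First I would record that $\bar X_1$ has an isolated singularity at the origin. Treating $3xQ-yP=0$, $3yQ+xP=0$ as a linear system in $(P,Q)$ — multiply the first equation by $x$, the second by $y$, and add — yields $(x^2+y^2)Q=0$, hence $Q=0$ and then $P=0$ away from the origin; by Lemma \ref{lemma:CommonRoots} the polynomials $P$ and $Q$ have no common root, so $\bar X_1$ vanishes only at $(0,0)$ and in particular is nonzero on every small circle. Writing a planar field $V=\dot r\,\partial_r+\dot t\,\partial_t$ in Cartesian coordinates gives $V_x+iV_y=e^{it}(\dot r+i r\dot t)$, so its index at the origin equals $1+\mathrm{wind}(\dot r+i r\dot t)$ along the circle. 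For $\bar X_1$, whose reduced polar form is $3rQ(t)\,\partial_r+P(t)\,\partial_t$, this reads $\dot r+ir\dot t=r(3Q(t)+iP(t))$, and therefore $\mathrm{Ind}(\bar X_1)=1+\mathrm{wind}(3Q+iP)$. The winding is well defined precisely because $3Q+iP$ never vanishes; by compactness $|3Q+iP|\ge\sqrt{\mu}>0$ on the unit circle.

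Next I would bring in the estimate $\|\bar X_1-X_1\|=O_2(r)$ from the preceding lemma. Denoting by $\dot r_X,\dot t_X$ the reduced polar components of $X_1$, this gives $|\dot r_X-3\epsilon Q|$ and $|\dot t_X-P|$ both $O(\epsilon^2)$ on $|z|=\epsilon$, so the quantity $\zeta_X=\dot r_X+i\epsilon\dot t_X$ satisfies $|\zeta_X-\epsilon(3Q+iP)|=O(\epsilon^2)$ while $|\epsilon(3Q+iP)|\ge\sqrt{\mu}\,\epsilon$. For $\epsilon$ small the difference is strictly smaller than the main term around the whole circle, so the straight-line homotopy between $\zeta_X$ and $\epsilon(3Q+iP)$ avoids the origin and the two have equal winding. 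Hence $\mathrm{Ind}(X_1)=1+\mathrm{wind}(3Q+iP)=\mathrm{Ind}(\bar X_1)$.

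The step I expect to be the main obstacle is this uniform domination near the roots of the characteristic polynomial $P$, where $\bar X_1$ comes closest to vanishing: in the raw polar picture the norm of $\bar X_1$ there drops from order $1$ to order $\epsilon$, and a careless comparison would break down. It is exactly Lemma \ref{lemma:CommonRoots}, resting on the hypothesis $ad\neq0$ carried over from the preceding lemmas, that keeps $3Q+iP$ bounded away from zero at those points and so makes the single estimate $|3Q+iP|\ge\sqrt{\mu}$ valid around the entire circle.
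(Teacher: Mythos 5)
Your proof is correct and follows essentially the same route as the paper, whose entire proof is ``Similar to the proof of Lemma \ref{lemma:IndexSimple}'' --- i.e.\ the same domination/homotopy argument on a small circle, using that $P$ and $Q$ have no common roots so that $\bar{X}_1$ is bounded below by a multiple of $r$ while the difference is $O_2(r)$. Your explicit bookkeeping via the winding of $3Q+iP$ just makes precise what the paper leaves implicit (and matches its Corollary that the index equals $\deg(Q,P)+1$).
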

\begin{proof}
Similar to the proof of Lemma \ref{lemma:IndexSimple}.
\end{proof}

\begin{corollary}
Assume $ad\neq 0$. Then the index of a simple singularity is equal to the degree of the map $(x,y)\to(Q(x,y),P(x,y))$ plus $1$.
\end{corollary}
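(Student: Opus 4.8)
The plan is to chain together the three immediately preceding results. By the final Proposition, the index of the simple singularity equals the index of the vector field $X_1$, which in turn equals the index of $\bar{X}_1$. So it suffices to compute the index of $\bar{X}_1 = (3xQ-yP)\,\partial_x + (3yQ+xP)\,\partial_y$ in terms of the degree of the map $\Phi=(Q,P):(x,y)\mapsto(Q(x,y),P(x,y))$, restricted to a small circle around the origin.

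First I would observe that the index of a planar vector field at an isolated singularity is, by definition, the degree of the Gauss map sending a small circle $|z|=r$ to the unit circle via $\bar{X}_1/\|\bar{X}_1\|$. So the task reduces to relating the winding number of the vector $(3xQ-yP,\,3yQ+xP)$ to the winding number of $(Q,P)$ as $(x,y)$ traverses the circle once. The cleanest route is to pass to complex notation. Writing $z=x+iy$ and $W=Q+iP$, I would compute the complex number whose argument governs the index of $\bar{X}_1$, namely $(3xQ-yP)+i(3yQ+xP)$. A direct rearrangement groups the $Q$-terms and the $P$-terms: $(3xQ-yP)+i(3yQ+xP) = Q(3x+3iy) + P(-y+ix) = 3zQ + izP = z(3Q+iP)$.

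The factor $3Q+iP$ is not quite $W=Q+iP$, but its winding number over the circle agrees with that of $Q+iP$: both have the same imaginary part $P$, and by Lemma \ref{lemma:CommonRoots} (using $ad\neq 0$) the pair $(P,Q)$ has no common zero on the circle, so the straight-line homotopy $(Q+iP) \rightsquigarrow (3Q+iP)$ through $((1-s)+3s)Q + iP$ never vanishes on $|z|=r$; hence $\deg(3Q+iP)=\deg(Q+iP)=\deg\Phi$. Therefore the index of $\bar{X}_1$ equals $\deg(z)+\deg(3Q+iP) = 1 + \deg\Phi$, since $z$ winds once around the origin. Combining with the equalities of indices from the Proposition and the earlier lemma gives the claim.

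I expect the only delicate point to be the nonvanishing needed to justify the homotopy and to guarantee that $\bar{X}_1$ has an isolated singularity at the origin; this is exactly where the hypothesis $ad\neq 0$ enters, via Lemma \ref{lemma:CommonRoots}, which ensures $P$ and $Q$ have no common root other than the origin and so $3zQ+izP=z(3Q+iP)$ is nonzero on any sufficiently small circle. The algebraic identity $(3xQ-yP)+i(3yQ+xP)=z(3Q+iP)$ itself is routine, so no lengthy computation is required; the conceptual content is entirely in the degree-additivity $\deg(z\cdot(3Q+iP))=1+\deg\Phi$ and the homotopy invariance of the degree.
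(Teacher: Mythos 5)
Your proof is correct and follows essentially the same route as the paper: invoke Proposition \ref{prop:Index1} to pass to $\bar{X}_1$, peel off a factor contributing degree $1$ (your identity $(3xQ-yP)+i(3yQ+xP)=z(3Q+iP)$ makes explicit what the paper states without computation), and identify $\deg(3Q,P)$ with $\deg(Q,P)$. You merely supply more detail than the paper, in particular the homotopy justifying the last step and the role of Lemma \ref{lemma:CommonRoots}.
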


\begin{proof}
By Proposition \ref{prop:Index1}, the index of the simple singularity is given by the degree of the map $(x,y)\to(3xQ-yP,3yQ+xP)$.
But the degree of this map is equal to the degree of the map $(x,y)\to (3Q,P)$ plus $1$.  Finally the degree of $(3Q,P)$ 
is equal to the degree of $(Q,P)$. 
\end{proof}

\subsection{Polar blow-up of the linear form}\label{sec:BlowupH}

We can write
$$
\lambda_1= \cos(\phi)dx -\sin(\phi)dy,
$$
where
\begin{equation}\label{eq:Tan3Theta}
\tan^3(\phi)=-\frac{cx+dy}{ax+by}.
\end{equation}
Denote by $X_1$ a vector field in the kernel of $\lambda_1$. 

Consider the polar projection $\pi:(-\delta,\delta)\times\R\to\R^2$ given by $\pi(r,t)=(r\cos(t),r\sin(t))$. Then straightforward calculations show that 
\begin{equation}\label{eq:l1}
\pi^*\lambda_1=\cos(\phi+t)dr-r\sin(\phi+t)dt.
\end{equation}

\begin{lemma}
Assume $ad\neq 0$. The singularities of $\pi^*\omega_1$ are of the form $(0,t_0)$, 
where $t_0$ is a root of the characteristic polynomial $P$.
\end{lemma}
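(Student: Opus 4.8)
The plan is to identify the singularities of the pulled-back cubic form $\pi^*\omega_1$ by using the factorization $\omega_1 = \lambda_1\lambda_2\lambda_3$ into linear forms, just as was done in the elliptic case, and then relate the singular set to the zeros of $P(t)$. Since $\pi^*\omega_1 = \pi^*\lambda_1 \cdot \pi^*\lambda_2 \cdot \pi^*\lambda_3$, a point is singular for $\pi^*\omega_1$ precisely when it is singular for one of the factors $\pi^*\lambda_j$. In the hyperbolic case only one of the $\lambda_j$ is real, namely $\lambda_1$ from equation \eqref{eq:Tan3Theta}, so I would focus on the real factor and examine where its pullback degenerates along $r=0$.

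First I would write out $\pi^*\lambda_1$ explicitly using formula \eqref{eq:l1}, namely $\pi^*\lambda_1 = \cos(\phi+t)\,dr - r\sin(\phi+t)\,dt$. The corresponding vector field in its kernel is singular exactly where both the $dr$ and $dt$ coefficients vanish. On the exceptional fiber $r=0$ the $dt$-coefficient automatically vanishes, so the singular points of the blow-up lie at $(0,t_0)$ with $\cos(\phi(t_0)+t_0)=0$, i.e. $\phi(t_0)+t_0 = k\pi + \pi/2$. The task is then to translate this angular condition on $\phi+t$ into the statement that $t_0$ is a root of $P$.

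Next I would make the connection to $P(t)$ through equation \eqref{eq:Tan3Theta}, which gives $\tan^3(\phi) = -(cx+dy)/(ax+by)$ evaluated at $(x,y)=(\cos t,\sin t)$. The condition $\cos(\phi+t)=0$ means $\phi+t$ is an odd multiple of $\pi/2$; combining this with the defining relation for $\tan^3\phi$ and the expression $P(t) = (a\cos t + b\sin t)\cos^3 t + (c\cos t + d\sin t)\sin^3 t$ from \eqref{DefineCharPol}, I expect the vanishing of $\cos(\phi+t)$ to force a trigonometric identity equivalent to $P(t_0)=0$. This is the same mechanism as the analogous lemma in the elliptic section, where the singularities of $\pi^*\omega$ were shown to sit over the roots of the characteristic polynomial, so the structure of the argument should carry over. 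The hypothesis $ad\neq 0$ guarantees via Lemma \ref{lemma:CommonRoots} that $P$ and $Q$ share no roots, which keeps the correspondence clean and rules out spurious degeneracies.

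The main obstacle I anticipate is the bookkeeping in passing between the $\tan^3(\phi)$ relation and the polynomial $P$: because $\phi$ enters through a cubed tangent, one must be careful about branch choices and about the factor of $3$ relating $\phi$ to $t$, and verify that the angular condition $\phi+t \equiv \pi/2 \pmod{\pi}$ matches the real root structure of the cubic differential equation rather than picking up the complex directions. Once the identity $\cos(\phi+t)=0 \Leftrightarrow P(t)=0$ is established for the real factor, the conclusion that all singularities of $\pi^*\omega_1$ lie over roots of $P$ follows immediately, since the remaining two factors $\lambda_2,\lambda_3$ are complex and contribute no real singular points on the blow-up.
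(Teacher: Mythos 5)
Your argument is correct, but it takes a genuinely different route from the paper's. The paper works directly with the polar expansion \eqref{eq:l3} of the whole cubic form: the coefficient of $dr^3$ is $P(t)$, which forces $P(t_0)=0$ at any singularity, and the coefficient of $dr^2dt$ is $-3rQ(t)$, which forces $r=0$ because Lemma \ref{lemma:CommonRoots} (this is exactly where the hypothesis $ad\neq 0$ enters) guarantees $Q(t_0)\neq 0$. You instead isolate the unique real linear factor $\lambda_1$, note that the two complex-conjugate factors multiply to a positive definite real quadratic form whose pullback never vanishes, and reduce everything to the zero set of \eqref{eq:l1}; there $\cos(\phi+t)$ and $r\sin(\phi+t)$ cannot vanish simultaneously unless $r=0$, so you get $r=0$ without ever invoking $Q$, and the angular condition $\cos(\phi(t_0)+t_0)=0$ gives $\tan^3\phi=\cot^3 t$, which combined with \eqref{eq:Tan3Theta} is precisely $P(t_0)=0$ as defined in \eqref{DefineCharPol} --- the computation you defer does close, and the paper itself uses this same equivalence in the proof of the lemma that follows. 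Your route buys independence from the no-common-roots lemma, so it only really uses $ad-bc\neq 0$ rather than $ad\neq 0$; its cost is that the factorization $\omega_1=\pi^*\lambda_1\cdot\pi^*\lambda_2\cdot\pi^*\lambda_3$ is not exact as you write it: with the normalized $\lambda_j$ there is a scalar conformal factor that vanishes linearly in $r$ at the origin (this is the ``up to a factor'' already implicit in \eqref{eq:l3}), so ``singularity of $\pi^*\omega_1$'' must be read, consistently with the paper, as a zero of the form after this universal factor of $r$ is divided out. With that caveat made explicit and the trigonometric identity written down, your proof is complete.
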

\begin{proof}
We use Formula \eqref{eq:l3} with $\pi^*\omega_1$ in place of $\omega_1$.  
Thus, $\pi^*\omega_1(r,t)=0$ implies that $P(t)=0$. And since, by Lemma \ref{lemma:CommonRoots}, $P$ and $Q$ have no common roots,
we conclude that $r=0$. Conversely if $P(t)=0$ and $r=0$ is is obvious that $\pi^*(r,t)=0$. 
\end{proof}

The vector field 
$$
\pi^*X_1=r\sin(\phi+t)\frac{\partial}{\partial r}+\cos(\phi+t)\frac{\partial}{\partial t}
$$
is in the kernel of $\pi^*\lambda_1$. $\pi^*X_1$ is singular only at points $(0,t_0)$ such that $\phi(t_0)+t_0=k\pi+\frac{\pi}{2}$, $k\in\Z$.
At these points 
$$
D(\pi^*X_1)(0,t_0)=(-1)^k
\left[
\begin{array}{cc}
1 & 0 \\
0 & -(1+\phi'(t_0))
\end{array}
\right].
$$
Observe that $(0,t_0)$ is hyperbolic for $\pi^*X_1$ if and only if $1+\phi'(t_0)\neq 0$.

\begin{lemma}
Assume $ad\neq 0$. Then, if $P(t_0)=0$, we have that
$$
P'(t_0)=-3Q(t_0)(1+\phi'(t_0)).
$$
\end{lemma}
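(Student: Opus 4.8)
The plan is to mirror the proof of the analogous lemma in the elliptic case (Section~\ref{sec:PhaseE}): differentiate the relation defining $\phi$, compute $P'+3Q$ directly, and then collapse both sides to a common expression using the constraint $P(t_0)=0$. Throughout I abbreviate $U=a\cos t+b\sin t$ and $V=c\cos t+d\sin t$, so that on the unit circle $P(t)=U\cos^3 t+V\sin^3 t$ and $Q(t)=\cos t\sin t\,(U\cos t-V\sin t)$, with $U'=-a\sin t+b\cos t$ and $V'=-c\sin t+d\cos t$.

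First I would compute $P'(t)+3Q(t)$ directly. Differentiating $P$ produces the terms $-3U\cos^2 t\sin t$ and $+3V\cos t\sin^2 t$, which are exactly cancelled by the two terms of $3Q(t)$; what survives is the clean expression
$$
P'(t)+3Q(t)=U'\cos^3 t+V'\sin^3 t.
$$
Next I would differentiate the defining relation $\tan^3(\phi)=-V/U$. The key algebraic step is the Wronskian-type identity $UV'-U'V=ad-bc=\delta$, obtained by expanding and using $\cos^2 t+\sin^2 t=1$. This gives
$$
3\tan^2(\phi)\sec^2(\phi)\,\phi'=-\frac{\delta}{U^2}.
$$

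Now I would specialize to a root $t_0$ of $P$. Note first that $ad\neq 0$ forces $\sin t_0\cos t_0\neq 0$ (since $P=a$ at $t=0$ and $P=d$ at $t=\pi/2$), and then $\delta\neq 0$ forces $U(t_0)\neq 0$, so every division below is legitimate. Since $\tan^3(\phi)=-V/U=\cot^3 t_0$ at $t_0$, taking the real cube root yields $\tan(\phi)=\cot t_0$, whence $\tan^2(\phi)\sec^2(\phi)=\cos^2 t_0/\sin^4 t_0$ and
$$
\phi'(t_0)=-\frac{\delta\,\sin^4 t_0}{3U^2\cos^2 t_0}.
$$
Using the constraint $P(t_0)=0$, i.e.\ $U\cos^3 t_0=-V\sin^3 t_0$, one checks $U\cos t_0-V\sin t_0=U\cos t_0/\sin^2 t_0$, so $Q(t_0)$ simplifies to $U\cos^2 t_0/\sin t_0$; substituting this and $\phi'(t_0)$ reduces $-3Q(t_0)\phi'(t_0)$ to $\delta\sin^3 t_0/U$. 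The final step is to show $U'\cos^3 t_0+V'\sin^3 t_0$ equals the same quantity: here I would \emph{not} substitute for $V$ directly, but eliminate $V'$ via the Wronskian identity $V'=(\delta+U'V)/U$, and only then apply $V\sin^3 t_0=-U\cos^3 t_0$; the two $U'\cos^3 t_0$ terms cancel, leaving $\delta\sin^3 t_0/U$. Combining gives $P'(t_0)+3Q(t_0)=-3Q(t_0)\phi'(t_0)$, which is the claim.

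The main obstacle is the awkward cube-root relation $\tan^3(\phi)=-V/U$: extracting a usable value of $\tan(\phi)$ at $t_0$ is what forces the observation $\tan(\phi)=\cot t_0$, and the ensuing simplification only closes cleanly because $V'$ is removed through the Wronskian rather than by naive substitution of $V$. The degenerate-direction bookkeeping — verifying that $U$, $\sin t_0$ and $\cos t_0$ are all nonzero under the hypotheses $ad\neq 0$ and $\delta\neq 0$ — is the other point requiring care, although it is routine.
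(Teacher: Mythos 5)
Your proposal is correct and follows essentially the same route as the paper: differentiate $\tan^3(\phi)=-V/U$ using the Wronskian identity $UV'-U'V=ad-bc$, observe that $P'+3Q=U'\cos^3 t+V'\sin^3 t$, and use $P(t_0)=0$ to reduce both $-3Q\phi'$ and $P'+3Q$ to $(ad-bc)\sin^3 t_0/U$. The only (harmless) differences are cosmetic — you eliminate $V'$ via the Wronskian where the paper substitutes for $\cos^3 t_0$, and you spell out the nondegeneracy checks the paper leaves implicit.
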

\begin{proof}
Differentiating equation \eqref{eq:Tan3Theta} we obtain
$$
-3\tan^2(\phi)\sec^2(\phi)\phi'=\frac{ad-bc}{(a\cos t+b\sin t)^2}.
$$
If $P(t)=0$, we have $\phi+t=\frac{\pi}{2}+k\pi$, and so $\sin(\phi)=\pm \cos(t)$, $\cos(\phi)=\pm\sin(t)$. Thus
$$
3\phi'= \frac{bc-ad}{(a\cos t+b\sin t)^2}\frac{\sin^4t}{\cos^2t}.
$$
Moreover, since $P(t)=0$, $Q(t)=\frac{\cos^2t}{\sin t}(a\cos t+b\sin t)$. Thus
$$
3Q\phi'= \frac{bc-ad}{a\cos t+b\sin t}\sin^3t.
$$
On the other hand, 
$$
P'+3Q=(-a\sin t+b\cos t)\cos^3t+(-c\sin t+d\cos t)\sin^3t.
$$
From $P(t)=0$ we obtain 
$$
P'+3Q=\sin^3t \left( (a\sin t-b\cos t) \frac{(c\cos t+d\sin t)}{a\cos t+b\sin t} +(-c\sin t+d\cos t) \right)
$$
Thus $P'+3Q=-3Q\phi'$.
\end{proof}

\begin{Proposition}
Assume $ad\neq 0$. The singularity $(0,t_0)$ of $\pi^*X_1$ is hyperbolic if and only if the root $t_0$ of $P$ is simple.
Moreover, it is a saddle if and only if $P'(t_0)$ and $Q(t_0)$ have opposite signs.
\end{Proposition}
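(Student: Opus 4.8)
The plan is to read off both assertions directly from the Jacobian $D(\pi^*X_1)(0,t_0)$ displayed above, combining it with the relation $P'(t_0)=-3Q(t_0)(1+\phi'(t_0))$ established in the preceding lemma and with the absence of common roots of $P$ and $Q$ coming from Lemma~\ref{lemma:CommonRoots}. No genuinely new computation is required beyond what is already in place; the argument is a short piece of linear algebra on a $2\times 2$ diagonal matrix, and the only delicate point is the bookkeeping of signs.

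First I would settle the hyperbolicity criterion. Since $(0,t_0)$ is a singularity of $\pi^*X_1$, the angle $t_0$ is a root of $P$, so $P(t_0)=0$; and it was already observed that $(0,t_0)$ is hyperbolic precisely when $1+\phi'(t_0)\neq 0$. Because $ad\neq 0$ and $P(t_0)=0$, Lemma~\ref{lemma:CommonRoots} gives $Q(t_0)\neq 0$. The relation $P'(t_0)=-3Q(t_0)(1+\phi'(t_0))$ then forces $P'(t_0)=0$ to be equivalent to $1+\phi'(t_0)=0$. Consequently $t_0$ is a simple root of $P$, i.e.\ $P'(t_0)\neq 0$, exactly when $1+\phi'(t_0)\neq 0$, which is exactly when the singularity is hyperbolic.

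Next I would treat the saddle criterion. As the Jacobian is diagonal with real eigenvalues $(-1)^k$ and $-(-1)^k(1+\phi'(t_0))$, a hyperbolic singularity is a saddle exactly when these eigenvalues have opposite signs, i.e.\ when their product is negative. That product equals $-(1+\phi'(t_0))$, so the singularity is a saddle precisely when $1+\phi'(t_0)>0$. Solving the relation above for $1+\phi'(t_0)=-P'(t_0)/\big(3Q(t_0)\big)$, this inequality reads $P'(t_0)/Q(t_0)<0$, i.e.\ $P'(t_0)$ and $Q(t_0)$ have opposite signs, which is the claimed condition.

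The only real obstacle is keeping the signs consistent: one must track the factor $(-1)^k$ through the eigenvalue product and confirm that it cancels, and then correctly transport the sign of $1+\phi'(t_0)$ across the negative factor $-3$ to the ratio $P'(t_0)/Q(t_0)$. Everything else follows formally from results already established.
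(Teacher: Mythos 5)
Your argument is correct and is exactly the one the paper intends: the proposition is stated without a written proof precisely because it follows from the displayed Jacobian $D(\pi^*X_1)(0,t_0)$, the identity $P'(t_0)=-3Q(t_0)(1+\phi'(t_0))$, and Lemma~\ref{lemma:CommonRoots} guaranteeing $Q(t_0)\neq 0$. Your sign bookkeeping (the cancellation of $(-1)^k$ in the eigenvalue product and the transfer of the sign of $1+\phi'(t_0)$ to the ratio $P'(t_0)/Q(t_0)$) is accurate.
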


\subsection{Parameter space in the hyperbolic region}

We may write the characteristic polynomial as
$$
P(x,y)=ax^4+bx^3y+cxy^3+dy^4.
$$
This polynomial has multiple roots if and only if its discriminant $\Delta$ vanishes. In this case, we have
that $\Delta=S^3-27T^2$, where
$$
S=ad-\frac{bc}{4}; \ \ T=-\frac{1}{16}(ac^2+db^2),
$$
(see\cite{Poston}). 
By section \ref{sec:BlowupH}, the type of singularity is constant in the connected components of $\Delta=0$ and $ad=0$. On the other hand, 
by section \ref{sec:IndexH}, the index is constant in the connected components of $\delta=0$.

If $bc\neq 0$, by a projective change of coordinates, we may assume $b=\pm 4$, $c=\pm 4$ (see \cite{Tabach}), and we consider two 
different cases, $bc=16$ and $bc=-16$. The third case to consider is $bc=0$. 

\subsubsection{Case $bc=16$}

For $bc=16$, we obtain 
$$
\Delta=(ad-4)^3-27(a+d)^2;\ \ \delta=ad-16.
$$

\begin{Proposition}
We have that 
\begin{itemize}
\item
If $\Delta>0$ (outside the full line in Figure \ref{Fig:ParHyp1}), then $P$ has no roots and the index is $1$.
\item
If $\Delta<0$ (inside the full line in Figure \ref{Fig:ParHyp1}), then $P$ has $2$ roots:
\begin{itemize}
\item
If $\delta>0$ (outside the dashed line in Figure \ref{Fig:ParHyp1}), then the index is $1$.
\item
If $\delta<0$  (inside the dashed line in Figure \ref{Fig:ParHyp1}), then the singularity is a saddle and the index is $-1$.
\end{itemize}
\end{itemize}
\end{Proposition}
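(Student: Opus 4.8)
The plan is to bootstrap from the three invariance statements already in hand. By the Corollary at the end of Section~\ref{sec:IndexH}, whenever $ad\neq 0$ the index equals $1$ plus the degree of the map $(x,y)\mapsto\bigl(Q(x,y),P(x,y)\bigr)$; by Section~\ref{sec:IndexH} this degree is constant on the connected components of $\{\delta\neq 0\}$, and by Section~\ref{sec:BlowupH} the number of real roots of the characteristic polynomial $P$ is constant on the connected components of $\{\Delta\neq 0\}$. Hence it suffices to (i) count the real roots of $P$ in each region cut out by $\Delta=0$, and (ii) evaluate the index at a single convenient point of $\{\Delta>0\}$ and of each region that $\delta=0$ carves out inside $\{\Delta<0\}$. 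Throughout I keep $b=c=4$ (so $bc=16$) and only use sample points with $ad\neq 0$, so that the Corollary applies.

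For the root count, writing $P(u)=P(u,1)=au^4+bu^3+cu+d$ with $a\neq 0$ exhibits $P$ as a real binary quartic of non-vanishing discriminant, which therefore has exactly two real projective roots when $\Delta<0$ and either none or four when $\Delta>0$. To exclude the four-root configuration on the (unbounded, connected) region $\Delta>0$, I would test it at $a=d=10$, where $P(\cos t,\sin t)=10(\cos^4 t+\sin^4 t)+2\sin(2t)\geq 5-2>0$, so $P$ has no real root and, by connectedness, the same holds on all of $\{\Delta>0\}$. When $P$ has no real root, $P(\cos t,\sin t)$ keeps a constant sign, so the loop $t\mapsto\bigl(Q(\cos t,\sin t),P(\cos t,\sin t)\bigr)$, which misses the origin by Lemma~\ref{lemma:CommonRoots}, lies in a half-plane and has winding number $0$; thus the degree is $0$ and the index is $1$. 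This proves the first item. It also settles the subcase $\Delta<0,\ \delta>0$: the set $\{\delta>0\}$ is a component of $\{\delta\neq 0\}$ on which $ad>16\neq 0$, it contains the rootless point $a=d=10$, and it meets $\{\Delta<0\}$ (for instance at $a=20,\ d=1$), so the index is constantly $1$ there.

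It remains to treat $\Delta<0,\ \delta<0$. Here I would use the dictionary of Section~\ref{sec:BlowupH}: each real root $t_0$ of $P$ gives a hyperbolic singularity of the blow-up that is a saddle exactly when $P'(t_0)$ and $Q(t_0)$ have opposite signs, and a standard winding-number count gives $\deg(Q,P)=\frac12\sum_{P(t_0)=0}\mathrm{sign}\bigl(P'(t_0)Q(t_0)\bigr)$, the sum being over the roots in $[0,2\pi)$. Taking the sample point $a=d=\varepsilon$ with $\varepsilon\to 0^+$ (so $ad=\varepsilon^2\neq 0$, $\delta=\varepsilon^2-16<0$, and $\Delta<0$), one computes $Q(\cos t,\sin t)=\frac{\varepsilon}{4}\sin(4t)$, while the four roots of $P$ cluster at $t=0,\frac{\pi}{2},\pi,\frac{3\pi}{2}$; a one-line local expansion at each shows that $P'$ and $Q$ have opposite signs, so all four singularities are saddles, the degree is $-2$, and the index is $-1$. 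Invariance on the component of $\{\delta\neq 0\}$ through this point then gives index $-1$ throughout $\Delta<0,\ \delta<0$, and the fact that every blow-up singularity is a saddle shows the singularity of $\mathcal{H}$ is a saddle.

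The main obstacle is exactly step (ii): a priori the two-root regime $\Delta<0$ allows degrees $2,0,-2$, i.e.\ indices $3,1,-1$, depending on how many of the angular singularities are nodes rather than saddles, so the spurious ``two nodes'' (index $3$) configuration must be excluded. This is precisely what the sample-point computations accomplish, and the invariance of the index on the components of $\{\delta\neq 0\}$ is what promotes these finitely many computations to the full statement. The only genuinely delicate bookkeeping is to verify that each chosen representative stays in $\{ad\neq 0\}$ and in the same component of $\{\delta\neq 0\}$ as the region it is meant to cover, which is why I route the $\delta>0$ case back to the rootless region along a path keeping $ad>16$.
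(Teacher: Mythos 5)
The paper states this Proposition without an explicit proof; the intended argument is exactly the one you reconstruct, namely the constancy of the index on components of $\{\delta\neq 0\}$ (Section 5.1), the constancy of the root count and singularity type off $\{\Delta=0\}\cup\{ad=0\}$ (Section 5.2), the Corollary $\mathrm{Ind}=\deg(Q,P)+1$, and evaluation at sample points. Your sample computations are correct: at $a=d=10$ one has $P\geq 3>0$, the loop $(Q,P)$ stays in a half-plane so the degree is $0$ and the index is $1$; at $a=d=\varepsilon$ the identity $Q(\cos t,\sin t)=\tfrac{\varepsilon}{4}\sin 4t$ is right, the four angular roots near $0,\tfrac{\pi}{2},\pi,\tfrac{3\pi}{2}$ all have $\mathrm{sign}(P'Q)=-1$, giving degree $-2$, index $-1$, and four saddles of the blow-up, i.e.\ a saddle of $\H$.

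There is, however, one concrete error in the bookkeeping: the sets you rely on as connected are not. Since $\Delta>0$ forces $(ad-4)^3>0$, i.e.\ $ad>4$, the region $\{\Delta>0\}$ splits into two components, one in $\{a>0,d>0\}$ and one in $\{a<0,d<0\}$; likewise $\{\delta>0\}=\{ad>16\}$ has two components, so it is not ``a component of $\{\delta\neq 0\}$.'' Your phrase ``by connectedness, the same holds on all of $\{\Delta>0\}$'' therefore only covers the positive branch, and the path from $a=d=10$ to $a=20,\ d=1$ only settles the subcase $\Delta<0,\ \delta>0$ on that branch. The conclusions survive because the negative branches are handled by the mirror sample points: at $a=d=-10$ one gets $P\leq -3<0$, hence no roots and again winding number $0$ and index $1$, and this propagates to $a=-20,\ d=-1$ inside $\{ad>16,\ a<0\}$. (By contrast $\{\delta<0\}=\{ad<16\}$ \emph{is} connected, so your last step is fine.) With those two extra evaluations added, the proof is complete and coincides with the paper's intended argument.
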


\begin{figure}[htb]
\includegraphics[width=.4
\linewidth,clip =false]{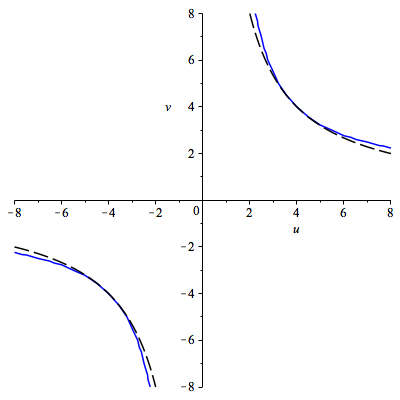}
\caption{The space of parameters $(a,d)$ in case $bc=16$.}
\label{Fig:ParHyp1}
\end{figure}

\subsubsection{Case $bc=-16$}

For $bc=-16$, we obtain
$$
\Delta=(ad+4)^3-27(a+d)^2;\ \ \delta=ad+16.
$$
\begin{Proposition}
The full line curve in Figure \ref{Fig:ParHyp2} is $\Delta=0$. The region $\Delta>0$ is divided 
in a bounded region, that we shall call region $I$, and two unbounded regions, that we shall call region $II$. The region $\Delta<0$
is divided in two unbounded regions, that we shall call region $III$.
\begin{itemize}
\item
If $(a,d)$ belongs to region $I$,  then $P$ has $4$ roots and the index is $1$.
\item
If $(a,d)$ belongs to region $II$, then $P$ has no roots and the index is $1$. 
\item
If $(a,d)$ belongs to region $III$, then $P$ has $2$ roots:
\begin{itemize}
\item
If $\delta>0$ (inside the dashed line in Figure \ref{Fig:ParHyp2}), then the index is $1$.
\item
If $\delta<0$  (outside the dashed line in Figure \ref{Fig:ParHyp2}), then the singularity is a saddle and the index is $-1$.
\end{itemize}
\end{itemize}
\end{Proposition}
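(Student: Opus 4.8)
The plan is to exploit a decoupling between two different curves: by Section~\ref{sec:IndexH} the \emph{index} is constant on the connected components of $\{\delta\neq 0\}$, whereas the \emph{number of roots} of $P$ is governed by the sign of the discriminant $\Delta$. Using the formulas $\Delta=(ad+4)^3-27(a+d)^2$ and $\delta=ad+16$ already recorded for the case $bc=-16$, I would first sketch the curve $\Delta=0$ in the $(a,d)$-plane. The symmetry $(a,d)\mapsto(d,a)$ together with the behaviour at infinity ($\Delta\to+\infty$ along directions with $ad\to+\infty$ and $\Delta\to-\infty$ along directions with $ad\to-\infty$) shows that $\Delta=0$ bounds exactly the claimed pieces: a bounded component around the origin (region $I$) and two unbounded tongues (region $II$) inside $\{\Delta>0\}$, together with two unbounded components (region $III$) inside $\{\Delta<0\}$. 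This is routine curve analysis.

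The observation that drives the index count is that $\Delta>0$ forces $(ad+4)^3>27(a+d)^2\ge 0$, hence $ad>-4$ and in particular $\delta=ad+16>12>0$. Thus all of $\{\Delta>0\}$ lies in $\{\delta>0\}=\{ad>-16\}$, which is star-shaped about the origin and hence connected. Consequently the whole of $\{\delta>0\}$ — comprising regions $I$, $II$ and the part of region $III$ inside the dashed hyperbola $ad=-16$ — is a single component of $\{\delta\neq 0\}$, on which the index is constant. To evaluate it I would use the representative point $a=d=T$ with $T$ large, where $P(\cos t,\sin t)\approx T(\cos^4 t+\sin^4 t)>0$ has no real root; the closed curve $t\mapsto(Q(t),P(t))$ then stays in a half-plane, so $\deg(Q,P)=0$ and, by the index formula $\mathrm{Index}=\deg(Q,P)+1$ of Section~\ref{sec:IndexH}, the index is $1$. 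This settles the index simultaneously for $I$, $II$ and the $\delta>0$ part of $III$.

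For the root counts I use that the number of real roots is constant on components of $\{\Delta\neq 0\}$: at the origin $P=4xy(x-y)(x+y)$ has four root directions, so region $I$ has $4$ roots; at $a=d=T$ there are none, so region $II$ has $0$ roots; and $\Delta<0$ forces exactly $2$ roots in region $III$. It remains to treat region $III$ with $\delta<0$. Since the dashed hyperbola $ad=-16$ itself lies in $\{\Delta<0\}$ (there $\Delta=-1728-27(a+d)^2<0$), the set $\{\delta<0\}=\{ad<-16\}$ is genuinely reached by region $III$ and splits into two components interchanged by the coordinate symmetry $(x,y)\mapsto(y,x)$, on which the index therefore agrees; thus one representative, e.g.\ $a=1$, $d=-20$, $b=4$, $c=-4$, suffices. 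There I would locate the two roots $t_0$ of $P$ and apply the saddle criterion of Section~\ref{sec:BlowupH}: one checks that $P'(t_0)$ and $Q(t_0)$ have opposite signs at both roots, so both blow-up singularities are saddles, and a direct winding count then gives $\deg(Q,P)=-2$, whence $\mathrm{Index}=\deg(Q,P)+1=-1$.

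The step I expect to be the main obstacle is this last one: carrying out the sign analysis at the two roots in region $III$ carefully enough to conclude both that the portrait is of saddle type and that $\deg(Q,P)$ equals $-2$ rather than $0$, with all orientations tracked correctly. A minor technical point is that the index formula assumes $ad\neq 0$; this is harmless because $ad=0$ implies $\delta=16>0$, so the locus $ad=0$ never meets $\{\delta<0\}$, and within $\{\delta>0\}$ I am free to choose the representative $a=d=T$ with $ad\neq 0$.
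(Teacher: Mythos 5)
Your proposal is correct and takes essentially the route the paper intends: the proposition is stated without an explicit proof because it is meant to be read off from the machinery of Sections~\ref{sec:IndexH}--\ref{sec:BlowupH} (index constant on the connected components of $\delta\neq 0$, root count constant on the components of $\Delta\neq 0$, evaluation of $\deg(Q,P)+1$ at representatives with $ad\neq 0$, and the saddle criterion), which is exactly what you assemble, including the key observation that $\Delta>0$ forces $ad>-4$ and hence $\delta>0$. The one computation you defer, in region $III$ with $\delta<0$, does go through; it is in effect the paper's Example~\ref{ex:Hyp3} (where $\delta=-3/4<0$, all four blow-up singularities are saddles and the index is $-1$), and since $\{ad-bc<0\}$ is connected in the full $(a,b,c,d)$-parameter space, that single example already settles both components of your fixed-$(b,c)$ slice without the reflection-symmetry argument.
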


\begin{figure}[htb]
\includegraphics[width=.4
\linewidth,clip =false]{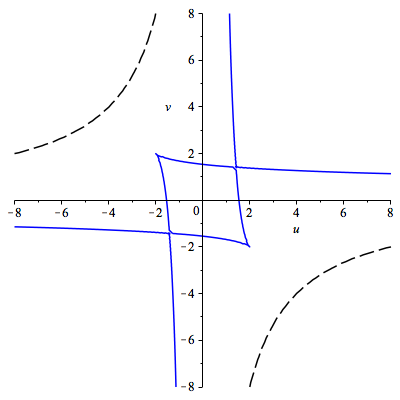}
\caption{The space of parameters $(a,d)$ in case $bc=-16$.}
\label{Fig:ParHyp2}
\end{figure}

\subsubsection{Case $bc=0$}

We shall assume without loss of generality that $b=0$ and $c=4$. In this case,
$$
\Delta=a^2(ad^3-27);\  \delta=ad.
$$

\begin{figure}[htb]
\includegraphics[width=.3
\linewidth,clip =false]{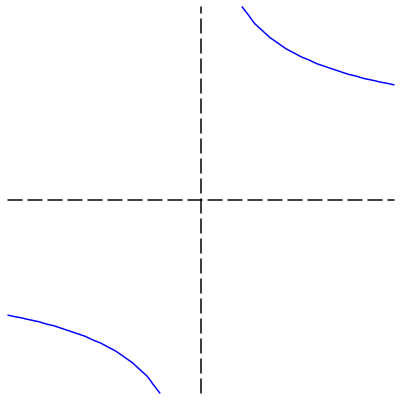}
\caption{The space of parameters $(a,d)$ in case $bc=0$.}
\label{Fig:ParHyp3}
\end{figure}

The full line in Figure \ref{Fig:ParHyp3} is $\Delta=0$, while the dashed lines are $\delta=0$. 

\begin{Proposition}
We have that:
\begin{itemize}
\item If $(a,d)$ is in the second or fourth quadrants, then the index is $-1$.
\item If $(a,d)$ is in the first or third quadrants, then the index is $+1$.
\begin{itemize}
\item If $(a,d)$ is outside the full lines, then $P$ has no roots.
\item If $(a,d)$ is inside the full lines, then $P$ has $2$ roots.
\end{itemize}
\end{itemize}
\end{Proposition}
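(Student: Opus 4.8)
I would handle the index and the root count separately, in each case reducing the claim to a computation at a single representative point of every region and invoking the constancy results already established. Throughout I stay in the four open quadrants, where $\delta=ad\neq 0$, so the singularity is simple and the corollary expressing the index as $\deg(Q,P)+1$ applies. As recalled in the subsection on the index of simple singularities, the index is constant on the connected components of $\{\delta\neq 0\}$; since here $\delta=ad$, these components are precisely the four open quadrants, so it suffices to evaluate the index once in each.

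\textbf{Index.} The key simplification is the identity, valid on the unit circle $x^2+y^2=1$, that $(P-iQ)(x+iy)=(ax+by)x^2+i(cx+dy)y^2$, which follows from $xP+yQ=(ax+by)x^2(x^2+y^2)$ and $yP-xQ=(cx+dy)y^2(x^2+y^2)$. Writing $z=x+iy=e^{it}$ and using $Q+iP=i(P-iQ)$ together with the corollary, this yields that the index equals $W$, the winding number about the origin of $g(t):=(ax+by)x^2+i(cx+dy)y^2$, where $x=\cos t$, $y=\sin t$. With $b=0$ and $c=4$ one gets $g(t)=a\cos^3 t+i\,\sin^2 t\,(4\cos t+d\sin t)$, whose real part $a\cos^3 t$ vanishes only at $t=\pi/2,3\pi/2$, where $g=id$ and $g=-id$. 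Hence the curve meets the imaginary axis only twice, and $W$ reduces to the sign of the single crossing of the positive imaginary axis: expanding $a\cos^3 t$ locally (an odd, hence sign-changing, vanishing) shows $W=\mathrm{sign}(a)$ when $d>0$ and $W=-\mathrm{sign}(a)$ when $d<0$, that is $W=+1$ in the first and third quadrants and $W=-1$ in the second and fourth.

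\textbf{Root count.} The number of real roots of the quartic $P(\cos t,\sin t)$, equivalently of $ax^4+4x+d$, is constant on the connected components of the complement of $\{\Delta=0\}\cup\{a=0\}$, since for $a\neq 0$ and $\Delta\neq 0$ the roots vary continuously without colliding or escaping to infinity. From $\Delta=a^2(ad^3-27)$ one has $\Delta<0$ throughout the second and fourth quadrants (there $ad^3<0$), where the sign rule for the discriminant of a real quartic forces exactly $2$ real roots; this covers those quadrants, for which no sub-case is needed. In the first and third quadrants $ad^3>0$, so $\Delta<0$ inside the branch $ad^3=27$ (again $2$ roots) and $\Delta>0$ outside it. Since $\Delta>0$ leaves the count ambiguous, I resolve it by evaluating $P$ at one point of the connected region $\{a,d>0,\ ad^3>27\}$, say $a=d=10$, where $P(x,1)=10x^4+4x+10>0$ has no real zero; hence $0$ roots throughout. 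The symmetry $P_{-a,-d}(x,y)=-P_{a,d}(-x,y)$, which preserves both $ad^3$ and the number of real roots, transfers the conclusion to the third quadrant.

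\textbf{Main obstacle.} The delicate points are, first, making the winding-number count rigorous: establishing that $g$ touches the imaginary axis only at $t=\pi/2,3\pi/2$ and correctly signing the lone crossing through the local behaviour of $a\cos^3 t$; and second, separating $0$ from $4$ real roots in the region $\Delta>0$, which the discriminant alone cannot do and which therefore forces the explicit representative evaluation, justified by the connectivity of that region within each quadrant.
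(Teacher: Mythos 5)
Your argument is correct, and it in fact supplies a proof where the paper records none: the Proposition is stated bare, resting implicitly on the constancy of the index on the components of $\{\delta\neq 0\}$, the constancy of the root count off $\{\Delta=0\}\cup\{a=0\}$, and unrecorded evaluations at representative points. Your root-count part (sign of the discriminant, one explicit evaluation $P(x,1)=10x^4+4x+10>0$ in the connected region $\{a,d>0,\ ad^3>27\}$, and the symmetry $P_{-a,-d}(x,y)=-P_{a,d}(-x,y)$ to reach the third quadrant) is exactly the argument the paper's setup calls for. For the index you take a genuinely different and more uniform route: rather than determining the phase portrait at a representative and reading the index off the signs of $Q$ at the roots of $P$ (the method of the paper's Cases 1--3 in the phase-portrait subsection), you use the identities $xP+yQ=(ax+by)x^2(x^2+y^2)$ and $yP-xQ=(cx+dy)y^2(x^2+y^2)$ — both of which check out — to convert the corollary ``index $=\deg(Q,P)+1$'' into ``index $=$ winding number of $(ax+by)x^2+i(cx+dy)y^2$''; for $b=0$, $c=4$ this collapses to a single signed crossing of the positive imaginary axis and yields index $=\mathrm{sign}(ad)=\mathrm{sign}(\delta)$ in closed form. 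This buys a computation valid simultaneously at every point of the four quadrants (so the constancy lemma is not even needed for the index) and makes transparent why the answer depends only on $\mathrm{sign}(\delta)$, consistent with the $bc=\pm 16$ cases. The only points worth polishing are minor: the crossing at $\cos t=0$ is a sign change of odd order rather than a transversal zero (which you already address), and $P(1,0)=a\neq 0$ is what rules out roots escaping through $y=0$ in the continuity argument for the root count.
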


\subsection{Phase portraits of simple singularities} 

In this section we assume that the singularity is simple, $ad\neq 0$ and the characteristic polynomial $P$ has only simple roots. Under these hypothesis, we shall show that there are only four possible types of phase portraits, and they correspond to index $1$, $P$ with $0$, $2$ or $4$ roots and index $-1$, $P$ with $2$ roots. 

We begin with some general facts: At consecutive roots of $P$, $\frac{dP}{dt}$ changes its sign. To find the sign of $Q$, observe that
if $P=0$ then
$$
(a\cos(t)+b\sin(t))\cos^3(t)=-(c\cos(t)+d\sin(t))\sin^3(t).
$$
At such points 
$$
Q(t)=-\sin^2(t) \left(c+d\tan(t) \right).
$$
So $Q(t)>0$, for $\tan(t)<-\frac{c}{d}$ and $Q(t)<0$, for $\tan(t)>-\frac{c}{d}$.

\paragraph{Case 1: $P$ has no real roots}

In this case the index is $+1$. 
\begin{example}\label{ex:Hyp1}
Consider $a=1$, $b=2$, $c=-4$, $d=12$. 
$$
P=x^4+2x^3y-4xy^3+12y^4=(x^2-2xy+2y^2)(x^2+4xy+6y^2).
$$
The phase portrait of the line field can be seen in Figure \ref{Fig:ExHyp1}.
\begin{figure}[htb]
 \centering
 \includegraphics[width=0.3\linewidth]{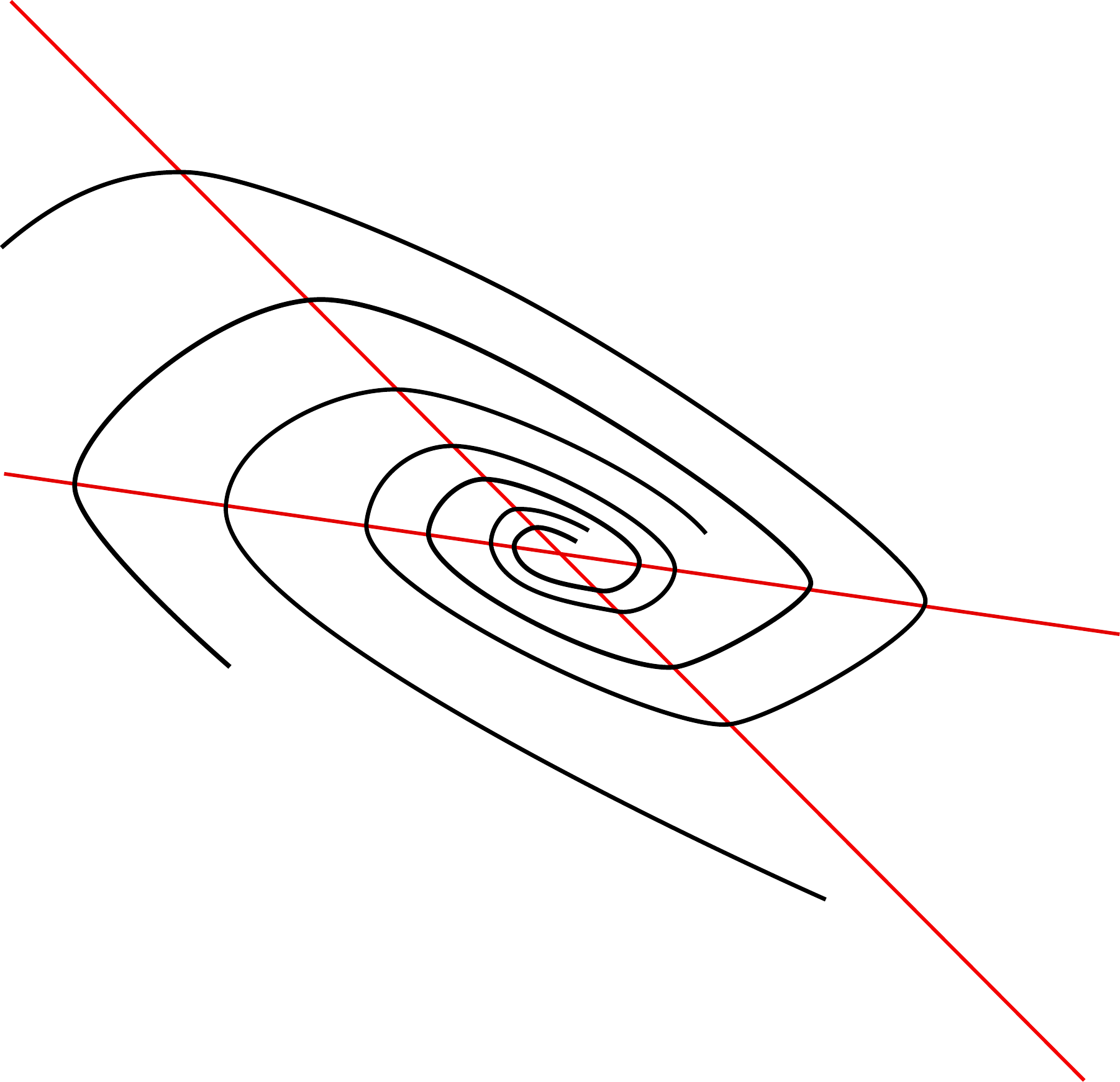}
 \caption{ Line field of Example \ref{ex:Hyp1}. }
\label{Fig:ExHyp1}
\end{figure}
\end{example}

\paragraph{Case 2: $P$ has $4$ real roots} 

From the previous section, we may assume that $bc=-16$ and $(a,d)$ belongs to region $I$ (see Figure \ref{Fig:ParHyp2}). 
It is also clear from our analysis that the type of the hyperbolic singularities of $\pi^*X_1$ are constant in each of the four regions 
$I1=\{(a,d)\in I| a>0,d>0\}$, $I2=\{(a,d)\in I| a<0,d>0\}$, $I3=\{(a,d)\in I| a<0,d<0\}$ and $I4=\{(a,d)\in I| a>0,d<0\}$. Thus to see the type of the 
hyperbolic singularities we must choose a pair $(a,d)$ in each of these four regions and compare $-\frac{c}{d}$ with the roots 
$s_1<s_2<s_3<s_4$ of $P$. 

We have chosen the following pairs: For $(a,b,c,d)=(1,-4,4,1)\in I1$, we obtain $-\frac{c}{d}=-4<-1<s_1$. For $(a,b,c,d)=(-1,-4,4,-1)\in I3$, we obtain $-\frac{c}{d}=4>1>s_4$. For $(a,b,c,d)=(-\frac{8}{5},-4,4,\frac{8}{5})\in I2$, we obtain $-\frac{c}{d}=-\frac{5}{2}<s_1=-2$. For $(a,b,c,d)=(\frac{8}{5},-4,4,-\frac{8}{5})\in I4$, we obtain $-\frac{c}{d}=\frac{5}{2}>s_4=2$. 
We conclude that in any case, the sign of $Q$ at the roots of $P$ keep constant. Thus we have only parabolic sectors.

\begin{example}\label{ex:Hyp4}
Consider
$$
P(t)=\cos^4(t)(\tan(t)-s_1)(\tan(t)-s_2)(\tan(t)-s_3)(\tan(t)-s_4),
$$
where $s_1=-2$, $s_2=-1$, $s_3=1$ and $s_4=-\frac{1}{2}$. Then
$$
a=-1,\ b=-\frac{5}{2},\ c=\frac{5}{2},\ d=1.
$$
The phase portrait of the polar line field can be seen in Figure \ref{Fig:Hyp4}. 

\begin{figure}[htb]
 \centering
 \includegraphics[width=0.3\linewidth]{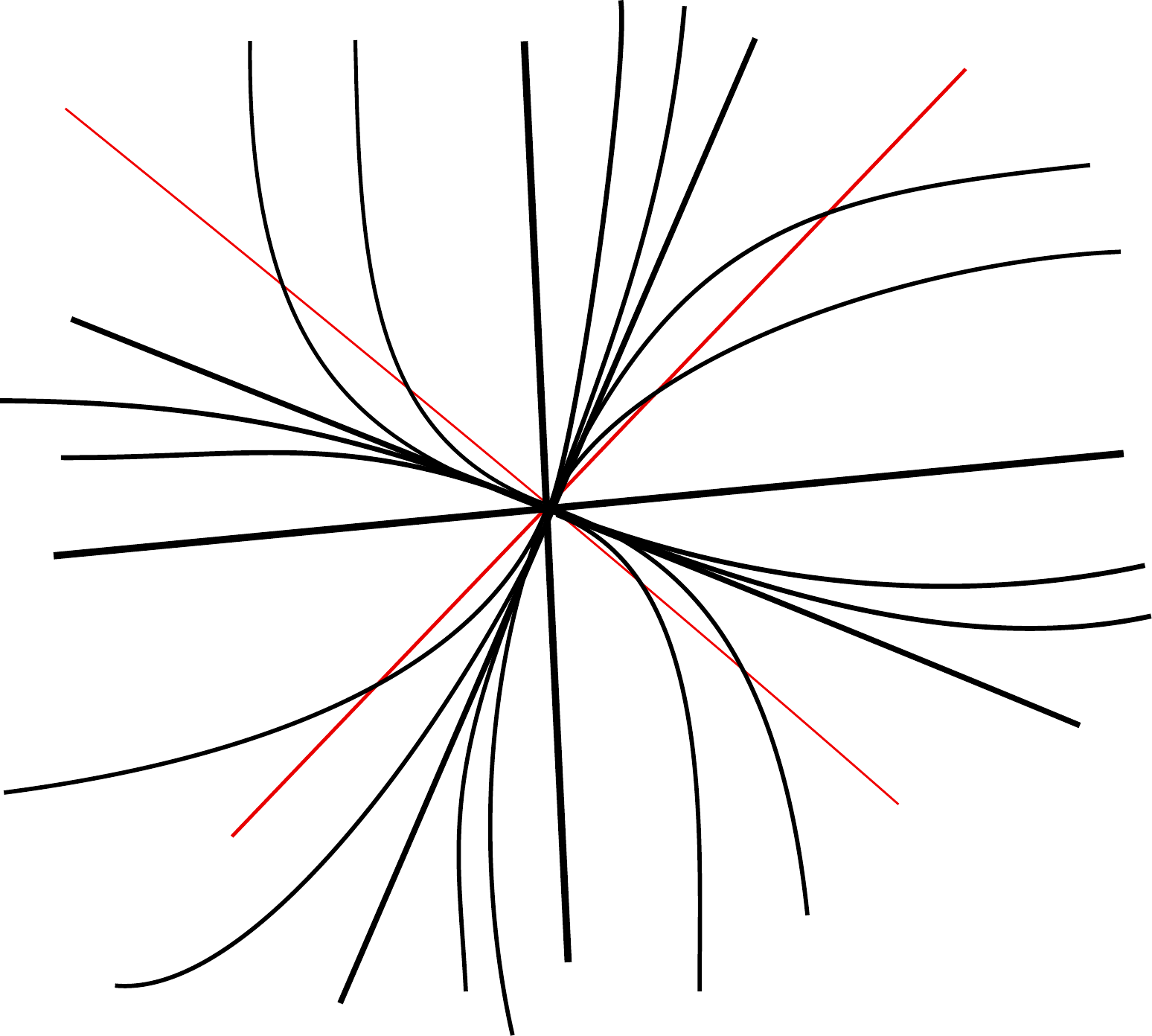}
 \caption{ Line field of Example \ref{ex:Hyp4}. Polar blowing-up resolution with eight hyperbolic singular points, four
 saddles and four nodes.}
\label{Fig:Hyp4}
\end{figure}
\end{example}

\paragraph{Case 3: $P$ has $2$ real roots}  Assume that the roots of $P(t)$ are $t_1<t_2$. We write
$$
P(t)=\cos^2(t)P_2(\cos t,\sin t)(\tan(t)-\tan(t_1))(\tan(t)-\tan(t_2)),
$$
for some quadratic polynomial $P_2(x,y)>0$. Then 
$$
P'(t_1)=P_2(\cos t_1, \sin t_1)(\tan(t_1)-\tan(t_2))<0,
$$
and similarly  $P'(t_2)>0$. 
If the signs of $Q(t_1)$ and $Q(t_2)$ are the same, then there are only parabolic sectors, the singular point
is a node and the index is $+1$. If $Q(t_1)$ and $Q(t_2)$ have different signs, then there are $4$ hyperbolic sectors, the singular point is a saddle and the index is $-1$.

\begin{example}\label{ex:Hyp2}
Consider $P_2(x,y)= (y-x)^2+x^2$, $\tan(t_1)=1$, $\tan(t_2)=-\frac{4}{3}$. 
In other words, $a=-8/3$, $b=10/3$, $c=-5/3$, $d=1$. Then there are only parabolic sectors.
The phase portrait of the line field can be seen in Figure \ref{Fig:Hyp2}.

\begin{figure}[htb]
 \centering
 \includegraphics[width=0.3\linewidth]{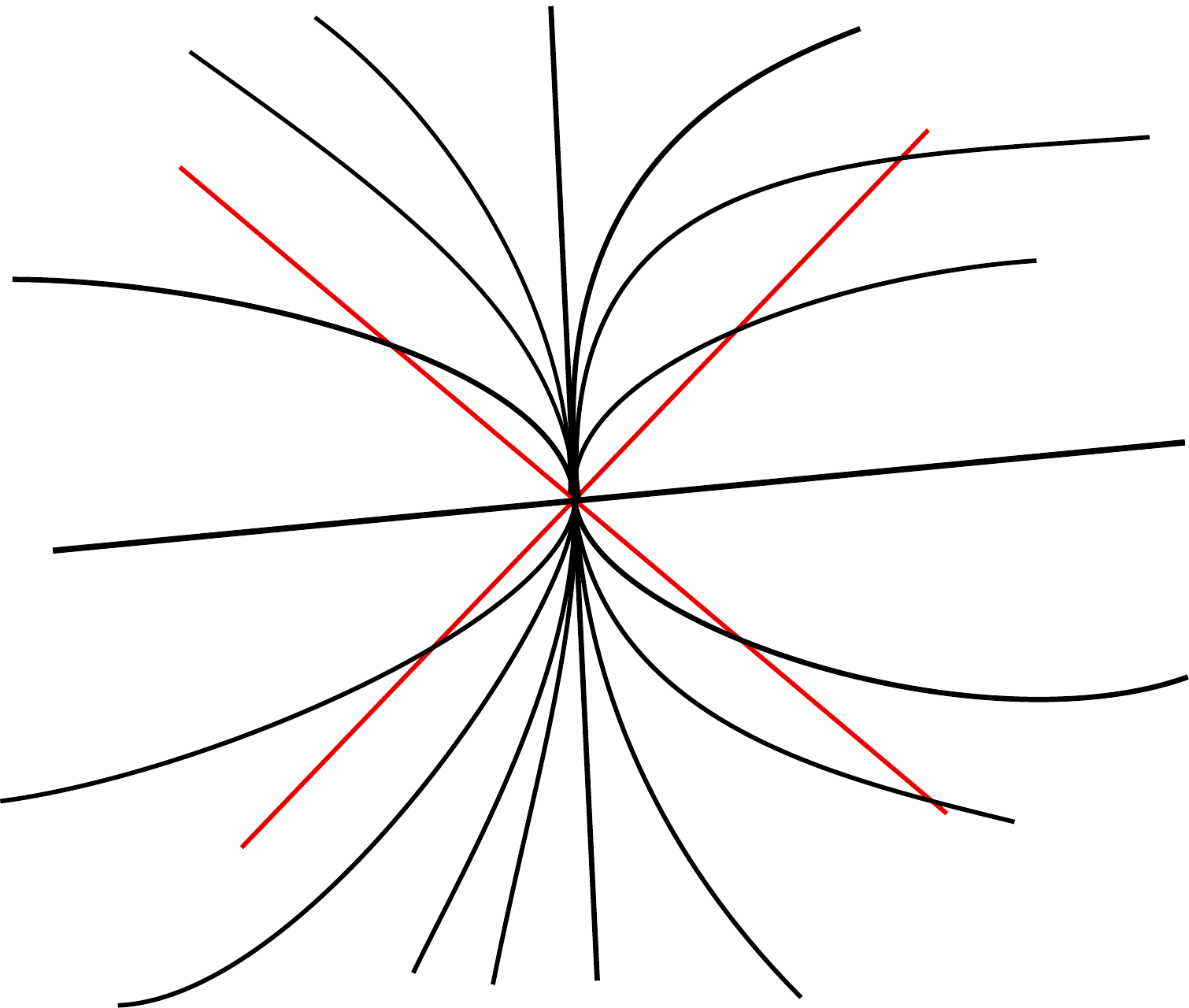}
 \caption{ Line field of Example \ref{ex:Hyp2}. Polar blowing-up resolution with four hyperbolic singular points, two saddles and two nodes. }
\label{Fig:Hyp2}
\end{figure}
\end{example}

\begin{example}\label{ex:Hyp3}
Consider $a=-1$, $d=1$, $b=-1/2$, $c=1/2$. Then there are $2$ singular points of the blow-up, $\tan(t)=\pm 1$.
There are $4$ hyperbolic sectors, thus the index is $-1$. The phase portrait of the line field can be seen in Figure \ref{Fig:Hyp3}.

\begin{figure}[htb]
 \centering
 \includegraphics[width=0.3\linewidth]{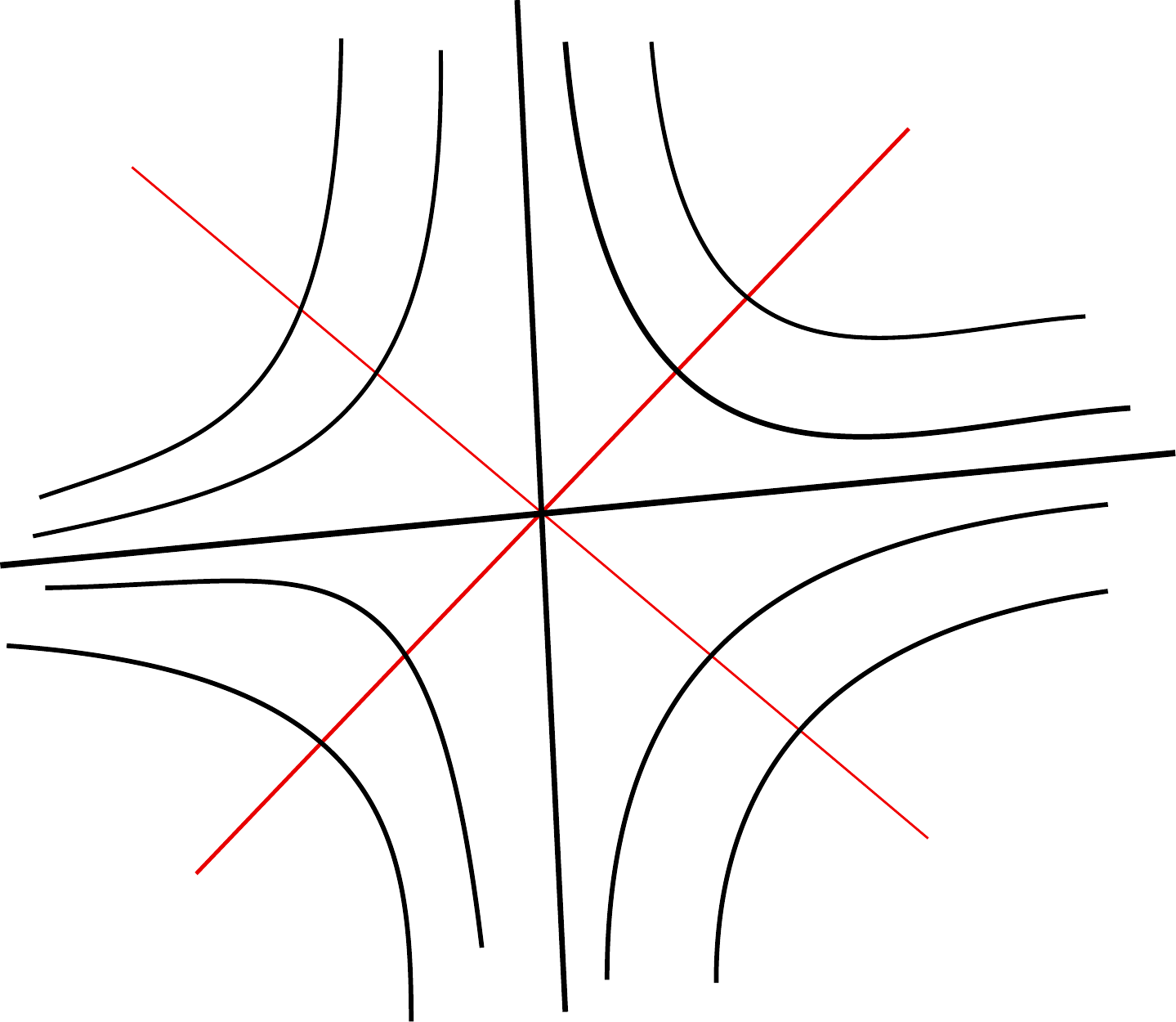}
 \caption{ Line field of Example \ref{ex:Hyp3}. Polar blowing up resolution with four hyperbolic singular points, all saddles.}
\label{Fig:Hyp3}
\end{figure}
\end{example}

\section{Parabolic curve}

Let $C$ be a parabolic curve of a surface $M$. Close to $C$ there are hyperbolic and elliptic regions $H$ and $E$. In this section
we prove that generically, we can extend the line field $\H$ continuously to $C$ by considering the tangent lines $\L$ to $C$. We prove also
that we can select a line field $\E_1$ of the $3$-web $\E$ of $E$ such that $\E_1$ can be continuously extended to $\L$. 

Generically, a parabolic curve has fold points and a finite number of Gauss points, also called godrons (\cite{Izumiya}).

\subsection{The cubic form in a neighborhood of a fold point}

A generic surface $M$ at a fold point is given by
\begin{equation}\label{eq:NormalFold}
z=\frac{1}{2}x^2+\frac{1}{6}y^3.
\end{equation}
(see \cite{Izumiya}).

\begin{Proposition}\label{prop:CubicFold}
Assume that $M$ is given by equation \eqref{eq:NormalFold}. Then the cubic form is conformally equivalent
to $-3dx^2dy+ydy^3$. 
\end{Proposition}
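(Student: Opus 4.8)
Assume that $M$ is given by equation \eqref{eq:NormalFold}. Then the cubic form is conformally equivalent to $-3dx^2dy+ydy^3$.

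The plan is to compute the cubic form $C=\nabla g$ directly from the affine differential geometry of the graph $z=\frac12 x^2+\frac16 y^3$, using the same machinery that Appendix A sets up for the elliptic and hyperbolic normal forms. First I would recall that for a graph $z=f(x,y)$ the Blaschke metric $g$ is conformal to the Hessian form $f_{xx}\,dx^2+2f_{xy}\,dx\,dy+f_{yy}\,dy^2$, the conformal factor being a power of $\det(\mathrm{Hess}\,f)$ chosen so that the Blaschke area normalization holds. For our $f$ we have $f_{xx}=1$, $f_{xy}=0$, $f_{yy}=y$, so $\det(\mathrm{Hess}\,f)=y$, which vanishes exactly on the parabolic curve $y=0$ and changes sign across it — consistent with the hyperbolic region $y<0$ and elliptic region $y>0$. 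The Hessian form is $dx^2+y\,dy^2$.

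Next I would carry out the actual computation of $C=\nabla g$. Since the conformal class is what matters and is projectively (hence affinely) invariant by the discussion in Section 2, I can work with any representative metric in the conformal class of $g$ and track only the proportionality. The cleanest route is to use the standard affine-geometry formula for the cubic (Pick) form of a graph in terms of $f$ and its derivatives up to third order; concretely, one expresses the components $C_{ijk}$ via the Fubini–Pick tensor, which for a Monge patch involves the third derivatives of $f$ contracted with the inverse Hessian and corrected by the conformal factor. Here the only nonzero third derivative is $f_{yyy}=1$ (all others, in particular $f_{xxx},f_{xxy},f_{xyy}$, vanish), so the computation collapses to a short expression. I expect the raw output to be a multiple of a form whose $dx^2dy$ coefficient comes from the interaction of $f_{yyy}$ with the inverse Hessian in the $x$-block and whose $dy^3$ coefficient carries the factor $y$ from the metric; rescaling conformally should then yield exactly $-3\,dx^2dy+y\,dy^3$.

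The main obstacle is bookkeeping rather than conceptual: the Blaschke normalization introduces a fractional power of $\det(\mathrm{Hess}\,f)=y$ as conformal factor, and one must verify that after passing to the conformal class the awkward powers of $y$ cancel everywhere except in the single surviving $y\,dy^3$ term, leaving the clean representative claimed. A secondary subtlety is that $y$ changes sign, so the Blaschke metric is definite on one side and indefinite on the other; I would check that the formula for $C=\nabla g$ extends smoothly as a cubic form across $y=0$ even though $g$ degenerates there, which is precisely what makes the stated representative valid in a full neighborhood of the fold point and not merely on one side. Once the fractional factors are shown to drop out of the conformal class, identifying the coefficients $-3$ and the $y$ against the normal forms of Appendix A is immediate, and the proposition follows.
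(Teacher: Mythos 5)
Your route is essentially the paper's own (Appendix~\ref{app:CubicParabolic}): a direct computation of the Blaschke metric $g_{ij}=f_{ij}/\phi$ with $\phi^4=|y|$ and of the Pick form of the Monge patch, and plugging $f_{xx}=1$, $f_{xy}=0$, $f_{yy}=y$, $f_{yyy}=1$ (all other third derivatives zero) into the graph formulas \eqref{eq:Cubic1}--\eqref{eq:Cubic4} indeed gives $C_{111}=C_{122}=0$, $\phi^5C_{112}=-\tfrac14$, $\phi^5C_{222}=\tfrac{y}{4}$, hence the conformal class of $-3\,dx^2dy+y\,dy^3$. The computation you leave as an ``expectation'' is exactly this one-line substitution, and your worry about the fractional power of $y$ resolves as you anticipate: the sign and the $|y|^{-5/4}$ enter only through the overall factor $\pm\tfrac{1}{4\phi^5}$, which is absorbed into the conformal equivalence.
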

\begin{proof}
See Appendix \ref{app:CubicParabolic}.
\end{proof}

From the above proposition we conclude that, at a fold point, there exists a line field $\L$ 
that coincides with $\H$ in the hyperbolic part, with a line field  $\E_1$ of the $3$-web $\E$ in the elliptic part
at is tangent to the parabolic curve.

\subsection{The cubic form in a neighborhood of a Gauss cusp}

A generic surface in a neighborhood of a Gauss cusp is given by
\begin{equation}\label{eq:NormalGaussCusp}
z=\frac{1}{2}x^2+\frac{1}{2}xy^2+\frac{\lambda}{24}y^4,
\end{equation}
for some $\lambda\neq 0,3$ (see \cite{Platonova} and also \cite{Uribe}).

\begin{Proposition}\label{prop:CubicGaussCusp}
Assume that $M$ is given by equation \eqref{eq:NormalGaussCusp}. Then the cubic form is conformally equivalent
to 
$$
C_{111}=-3, \ C_{112}=-\lambda y, \ C_{122}= 3x-\lambda\frac{y^2}{2},
$$
$$
C_{222}=y\left(x(6+\lambda)+\lambda(\lambda-2) \frac{y^2}{2}\right).
$$
\end{Proposition}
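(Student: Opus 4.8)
The plan is to compute the cubic form of the graph directly from affine differential geometry, exactly as in the fold case treated in Proposition~\ref{prop:CubicFold}, and then to extract its conformal class. First I would record the derivatives of $f(x,y)=\frac12 x^2+\frac12 xy^2+\frac{\lambda}{24}y^4$ entering the computation: the nonzero second derivatives $f_{xx}=1$, $f_{xy}=y$, $f_{yy}=x+\frac{\lambda}{2}y^2$, the nonzero third derivatives $f_{xyy}=1$ and $f_{yyy}=\lambda y$, and the Hessian determinant
$$
H=f_{xx}f_{yy}-f_{xy}^2=x+\frac{\lambda-2}{2}\,y^2,
$$
whose zero set is exactly the parabolic curve; here $H_x=1$ and $H_y=(\lambda-2)y$.

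Next I would apply the graph formula for the Fubini--Pick cubic form that underlies Appendix~\ref{app:CubicParabolic}: up to a common positive scalar the Blaschke cubic form has components
$$
C_{ijk}\ \propto\ f_{ijk}-\frac{1}{4H}\left(f_{ij}H_k+f_{jk}H_i+f_{ki}H_j\right),
$$
the normalization being fixed by the apolarity relation $g^{jk}C_{ijk}=0$, which follows from Jacobi's formula $f^{jk}f_{ijk}=H_i/H$. Since only the conformal class of the cubic form is intrinsic, I may rescale all four independent components by one and the same nonvanishing factor; multiplying by $4H$ clears the denominator and produces polynomial coefficients. Substituting the derivatives above gives at once $C_{111}=-3$, $C_{112}=-\lambda y$ and $C_{122}=3x-\lambda\frac{y^2}{2}$; for $C_{222}$ one computes $4H\lambda y-3\big(x+\frac{\lambda}{2}y^2\big)(\lambda-2)y$ and, collecting the $xy$ and $y^3$ terms, obtains $C_{222}=y\big(x(6+\lambda)+\lambda(\lambda-2)\frac{y^2}{2}\big)$, which are precisely the asserted expressions.

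The one delicate point --- and the only real obstacle --- is the status of the conformal factor $4H$ across the parabolic curve. The Blaschke normalization, hence the cubic form $C=\nabla g$, is defined only where $H\neq 0$, and $4H$ reverses sign as one crosses from the elliptic region ($H>0$) into the hyperbolic region ($H<0$). Thus the polynomial cubic form produced above is a positive multiple of the Blaschke form in the elliptic region and a negative multiple of it in the hyperbolic region. This is harmless for the intended application: scaling a cubic form by a nonzero function leaves its real null directions unchanged, so the polynomial representative determines the same $3$-web $\E$ in the elliptic part and the same line field $\H$ in the hyperbolic part, while being a genuine smooth cubic form on both sides and along the parabolic curve. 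This smooth extension is exactly what permits the line field $\L$ to be prolonged continuously to the parabolic curve, completing the local picture begun with the fold case.
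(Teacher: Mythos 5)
Your computation is correct and lands exactly on the stated expressions, but your route is organized differently from the paper's. The paper's proof (Appendix \ref{app:CubicParabolic}) redoes the affine-geometric computation from scratch for this particular graph: it computes the Blaschke metric, the affine normal through $Z_1,Z_2$, the induced connection, and then $C=\nabla g$ component by component, treating the elliptic and hyperbolic sides of the parabolic curve separately. You instead invoke the closed-form ``apolar part'' formula
$$
C_{ijk}\ \propto\ f_{ijk}-\frac{1}{4H}\left(f_{ij}H_k+f_{jk}H_i+f_{ki}H_j\right)
$$
and substitute. That formula is not an extra assumption: it is precisely the content of \eqref{eq:Cubic1}--\eqref{eq:Cubic4} in Appendix \ref{app:CubicOrdern} --- for instance $Hf_{xxx}-\tfrac34 f_{xx}H_x$, with $H_x=f_{xxx}f_{yy}+f_{xx}f_{xyy}-2f_{xy}f_{xxy}$, expands to the right-hand side of \eqref{eq:Cubic1}, and similarly for the other components --- so a one-line reference to those identities would make your argument self-contained. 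What your approach buys is a much shorter and less error-prone derivation (no need to compute $\xi$, $Z_1$, $Z_2$ or the Christoffel symbols, and no case split on the sign of $H$), and your apolarity check via $f^{jk}f_{ijk}=H_i/H$ is a nice sanity check on the normalization. All four substitutions are correct. The only quibble is your closing discussion of the sign of the conformal factor: relative to the paper's normalization the polynomial you produce equals $4\phi^5\,C_{ijk}$ with $\phi=|H|^{1/4}>0$, hence is a \emph{positive} multiple of the paper's cubic form on both sides of the parabolic curve; but, as you correctly observe, only the conformal class matters for the null directions, so the conclusion is unaffected either way.
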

\begin{proof}
See Appendix \ref{app:CubicParabolic}.
\end{proof}

From the above proposition we conclude that, at a Gauss cusp, there exists a line field $\L$ 
that coincides with $\H$ in the hyperbolic part, with a line field  $\E_1$ of the $3$-web $\E$ in the elliptic part
at is tangent to the parabolic curve.

\section{Poincar\'e-Hopf type theorems}

\subsection{Elliptic region}

Consider now an elliptic region $E$ bounded by a parabolic curve $C$. Let $\E$ denote the $3$-web of the cubic form in the region $E$. As we have seen above, in a neighborhood 
of $C$ one of the line field $\E_1$ of the $3$-web $\E$ can be continuously extended to $C$ by considering $\E_1$ tangent to $C$. 

\begin{Proposition}\label{prop:PoincareElliptic}
We have that 
$$
\sum_{p_i\in E} Ind(p_i,\E) =\chi(E).
$$
where the sum is taken over the quadratic points $p_i$.
\end{Proposition}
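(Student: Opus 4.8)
The plan is to realize the $3$-web $\E$ as the null foliation of a single cubic differential and then count its zeros by a Poincar\'e--Hopf / relative Euler number argument, in which the prescribed boundary behaviour of $\E_1$ supplies exactly the framing needed along $C$.

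First I would use the conformal structure of the Blaschke metric (definite throughout the open elliptic region $E$) to write the cubic form as $\omega=\Phi+\bar\Phi$ with $\Phi=(A+iB)\,dz^{3}$, $A=C_{111}$, $B=C_{222}$, so that $\Phi$ is a global section of $K^{3}$, where $K=T^{*(1,0)}E$. Its zeros are precisely the quadratic points, and by \eqref{eq:IndexAB} the web index there is $Ind(p_i,\E)=-\tfrac13\deg(A+iB)=-\tfrac13\,\mathrm{ind}_{p_i}(\Phi)$, where $\mathrm{ind}_{p_i}(\Phi)$ is the index of $\Phi$ as a section of $K^3$. Hence
\[
\sum_{p_i\in E}Ind(p_i,\E)=-\tfrac13\sum_{p_i\in E}\mathrm{ind}_{p_i}(\Phi),
\]
and it suffices to show $\sum \mathrm{ind}_{p_i}(\Phi)=-3\chi(E)$.

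Second, I would compute the right-hand side as the relative Euler number of $K^3$ over $(E,C)$ with respect to the boundary framing $\Phi/|\Phi|$. The three null directions of $\omega$ make angles $\alpha_k=\tfrac13\!\left(\tfrac\pi2-\arg(A+iB)\right)+k\tfrac\pi3$, so $\arg\Phi\equiv\tfrac\pi2-3\alpha_0\pmod\pi$; that is, the phase of $\Phi$ equals $-3$ times the direction of $\E_1$ up to a constant. By the Propositions of Section 6, $\E_1$ extends continuously to $C$ as the field of tangent lines to $C$. Therefore, along a smooth curve $C_\varepsilon\subset E$ parallel to $C$ (chosen inside the region where the Blaschke metric is definite and enclosing all quadratic points), the framing $\Phi/|\Phi|$ is homotopic to the $(-3)$-rd power of the unit-tangent framing of $C_\varepsilon$. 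Under the isomorphism $T^{1,0}E\cong TE$ of oriented plane bundles one has $e(K^3)=3e(K)=-3e(TE)$, so this relative Euler number equals $-3$ times the relative Euler number of $TE$ framed by the boundary tangent field.

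Third, the tangent-framed relative Euler number of $TE$ is $\chi(E)$: this is the classical Poincar\'e--Hopf theorem for a line (or vector) field tangent to the boundary, valid because $\chi(C)=0$ (one checks it directly on a disc and an annulus, or by doubling, where $\chi(DE)=2\chi(E)$). Collecting the factors gives $\sum\mathrm{ind}_{p_i}(\Phi)=-3\chi(E)$ and hence $\sum Ind(p_i,\E)=\chi(E)$. As a sanity check, a compact convex surface has $E$ the whole sphere, $\chi=2$ and $\deg K^3=-6$, recovering $\sum Ind=2$ and the stated lower bound of six quadratic points. The main obstacle will be the middle step: matching, with correct sign, the boundary framing determined by $\Phi$ to the tangent framing of $C$, complicated by the fact that the Blaschke conformal structure degenerates exactly on $C$. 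I would handle the degeneration by retracting to $C_\varepsilon$ (using that $\E_1$, and hence the phase of $\Phi$, extends continuously up to $C$ by Section 6, so the winding is computed on $C_\varepsilon$, where $\Phi$ is an honest nonvanishing section of $K^3$), and I would fix the sign via the orientation convention in \eqref{eq:IndexAB} together with the closed-surface check above.
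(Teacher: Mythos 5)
Your argument is correct, but it follows a genuinely different route from the paper's. The paper proves the proposition by the classical do Carmo--style Gauss--Bonnet argument: it triangulates $E$ so that each triangle contains at most one quadratic point, relates $\int_T K\,d\sigma-2\pi\,Ind(\omega,p_T)$ to $\tfrac16\int_{6\partial T}\bigl[\tfrac{DX}{ds}\bigr]ds$ for a branch $X$ of the web (the six-fold boundary handling the multivaluedness), telescopes the interior edges, uses the Section~6 extension to choose $X$ tangent to $C$ so the boundary term becomes $-\int_C k_g\,ds$, and concludes by Gauss--Bonnet. You instead package the web as a single-valued cubic differential $\Phi=(A+iB)\,dz^3$, a section of $K^3$ for the Blaschke conformal structure, identify the web index with $-\tfrac13$ times the local index of $\Phi$ via \eqref{eq:IndexAB}, and compute the total index as a relative Euler number of $K^3$ matched along $C_\varepsilon$ to the $(-3)$-rd power of the tangent framing, reducing to Poincar\'e--Hopf for boundary-tangent fields via doubling. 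What your approach buys: no triangulation or curvature integral, the factor $3$ and the signs become transparent from $e(K^3)=-3e(TE)$, the multivaluedness of the web is absorbed into the single-valued section $\Phi$, and the argument generalizes verbatim to $d$-webs defined by degree-$d$ differentials; the cost is the extra care you already flag at the parabolic boundary where the conformal structure degenerates. Both proofs rest on the same essential input from Section~6, namely that $\E_1$ extends to the tangent line field of $C$. Two small points to tidy up: you should, as the paper does, dispose of the non-orientable case by passing to the orientation double cover before invoking a conformal structure and the bundle $K$; and along each component of $C_\varepsilon$ you should orient $\E_1$ by the boundary orientation so that its $(-3)$-rd power is an honest (not sign-ambiguous) section of $K^3$, which is what makes the homotopy from $\Phi/|\Phi|$ to the tangent framing legitimate.
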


\begin{proof}
In this proof we follow \cite{Manfredo} and \cite{Ballesteros}. Assume first that $M$ is orientable, for non-orientable surfaces 
consider a double covering. Fix an orientation and a Riemannian metric on $M$. 

Take a triangulation $\T$ such that each triangle $T\in\T$ contains at most one singular point $p_T$ in its interior. Take $X$ 
any vector field in the kernel of $\omega$. Then
$$
\int_{T}Kd\sigma-2\pi Ind(\omega,p_T)=\frac{1}{6}\int_{6\partial T} \left[ \frac{DX}{ds} \right] ds,
$$
where $\left[ \frac{DX}{ds} \right]$ denotes the algebraic value of the covariant derivative and the notation $6\partial{T}$ means $6$ times the boundary of $T$. Summing we obtain
$$
\int_{E}Kd\sigma-2\pi \sum_{p\in E} Ind(\omega,p)=\int_C \left[ \frac{DX}{ds} \right] ds=-\int_C k_gds,
$$
where the last equality comes from the fact that we can choose $X$ tangent to $C$. Now the Gauss-Bonnet theorem
implies that 
$$
\sum_{p\in E} Ind(\omega,p)=\chi(E),
$$
thus proving the proposition.
\end{proof}

\begin{corollary}
A generic compact convex surface $M$ in $\P^3$ has at least $6$ quadratic points.
\end{corollary}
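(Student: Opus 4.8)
The plan is to combine the Poincar\'e--Hopf formula of Proposition \ref{prop:PoincareElliptic} with the local index computation of Section \ref{sec:IndexE}. First I would observe that a compact convex surface $M\subset\P^3$ is elliptic at every point: convexity forces the second fundamental form to be definite, and since definiteness of the Blaschke metric is a projectively invariant condition, the entire surface lies in the elliptic region. Hence there is neither a parabolic curve nor a hyperbolic region, so $E=M$; and since a compact convex surface is diffeomorphic to $\S^2$, we have $\chi(E)=\chi(\S^2)=2$.

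Next I would apply Proposition \ref{prop:PoincareElliptic} directly. Because $E=M$ has empty boundary, the boundary integral appearing in that proposition's proof drops out, and we obtain
$$
\sum_{p_i\in E} Ind(p_i,\E)=\chi(E)=2,
$$
where the sum runs over the quadratic points, which are finite in number since they are isolated on a compact surface.

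The arithmetic step is where genericity enters. For a generic surface every quadratic point is simple, so by the index computation of Section \ref{sec:IndexE} (cf.\ the Proposition accompanying Figure \ref{Fig:Astroid}) each $Ind(p_i,\E)$ equals either $+\frac{1}{3}$ or $-\frac{1}{3}$. Writing $n_+$ and $n_-$ for the numbers of quadratic points of index $+\frac{1}{3}$ and $-\frac{1}{3}$ respectively, the displayed identity becomes $\frac{1}{3}(n_+-n_-)=2$, i.e.\ $n_+-n_-=6$. Since $n_-\ge 0$, the total number of quadratic points satisfies $n_++n_-\ge n_+-n_-=6$, which is the assertion.

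The only genuinely delicate point is the setup in the first paragraph, namely verifying that projective ellipticity (definiteness of the Blaschke metric) holds at every point of a convex surface and that the quadratic points remain isolated so the sum is finite. Both facts are standard, so once the setup is in place the corollary follows immediately from the two quoted results, with no further computation required.
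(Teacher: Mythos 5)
Your proposal is correct and follows exactly the route the paper intends: apply Proposition \ref{prop:PoincareElliptic} with $E=M\cong\S^2$ so the indices sum to $2$, then use the fact that simple (generic) elliptic quadratic points have index $\pm\frac{1}{3}$ to get $n_+-n_-=6$ and hence at least $6$ points. The paper leaves this deduction implicit, but your argument fills it in faithfully with no divergence in method.
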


\begin{corollary}\label{cor:Caratheodory}
Let $M$ be a compact convex surface in $\P^3$ and assume that the cubic forms at the quadratic points 
are semi-homogeneous. Then $M$ has at least $2$ quadratic points.
\end{corollary}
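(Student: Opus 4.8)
The plan is to read this off as a counting argument from two facts already in hand: the Poincaré--Hopf identity for the $3$-web $\E$ and the index bound of Theorem \ref{thm:Homogeneous}. First I would record the topological setup. A compact convex surface $M\subset\P^3$ is elliptic at every point, so its hyperbolic region is empty and its elliptic region is all of $M$; in particular $M$ carries no parabolic curve. Being a compact convex surface, $M$ is homeomorphic to the sphere, whence $\chi(M)=\chi(E)=2$. Under the semi-homogeneity hypothesis each quadratic point is an isolated singularity of the cubic form, so on the compact surface $M$ there are only finitely many of them and the sum below is finite.

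Next I would apply the Poincaré--Hopf identity on the closed surface. Since $E=M$ has empty boundary, the Gauss--Bonnet computation in the proof of Proposition \ref{prop:PoincareElliptic} specializes directly: summing the local index formula over a triangulation adapted to the quadratic points yields
$$
\sum_{p_i\in E} Ind(p_i,\E)=\chi(M)=2,
$$
the boundary integral $-\int_C k_g\,ds$ being absent because there is no parabolic curve. (Equivalently, this is the Poincaré--Hopf theorem of the introduction with $H=\emptyset$.) Now I invoke Theorem \ref{thm:Homogeneous}: since the cubic form at each quadratic point $p_i$ is semi-homogeneous by assumption, $Ind(p_i,\E)\le 1$. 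If $M$ had at most one quadratic point, the left-hand side above would be at most $1$, contradicting the value $2$ on the right. Therefore $M$ must have at least two quadratic points, which is the assertion.

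The counting step itself is immediate, so the real substance lives in the two imported ingredients, and this is where I expect the only genuine care to be needed. On the Poincaré--Hopf side I must confirm that the factor-of-$6$ normalization in the local index formula of Proposition \ref{prop:PoincareElliptic} is precisely the one making $\sum Ind(p_i,\E)$ equal $\chi(M)$ rather than a multiple of it, so that the right-hand side is genuinely $2$; on the index side the upper bound $Ind(p_i,\E)\le 1$ rests on Theorem \ref{thm:Homogeneous} and hence on Loewner's conjecture. This mirrors exactly the classical Loewner--Hamburger route to the Carathéodory conjecture, with quadratic points playing the role of umbilics, which is why the conclusion is naturally phrased as a version of that conjecture.
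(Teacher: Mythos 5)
Your proposal is correct and follows exactly the route the paper intends for this corollary: apply Proposition \ref{prop:PoincareElliptic} with $E=M$ and $\chi(M)=2$, then use the bound $Ind(p_i,\E)\le 1$ from Theorem \ref{thm:Homogeneous} (via Loewner's conjecture) to force at least two quadratic points. Your extra remarks on finiteness of the singularities and on the normalization of the index are sensible sanity checks but do not change the argument.
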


Corollary \ref{cor:Caratheodory} is a version of Carath\'edory conjecture for quadratic points, assuming that
the cubic forms are semi-homogeneous. One can conjecture that Corollary \ref{cor:Caratheodory} remains valid
without this assumption.

\subsection{A compact convex surface with two quadratic points}

Consider a smooth rotation surface $S$ with axis $L$. The points of $L\cap S$ are quadratic of index $1$.

We describe below a compact rotation surface $S$ with $2$ points of intersection with the axis $L$ and no other quadratic point.
In rectangular coordinates $(x_1,x_2,y)$, the equation of $S$ is 
\begin{equation*}
x_1^2+x_2^2+y^2(1+\lambda y)^2=(1+\lambda y)^2,
\end{equation*}
for some constant $\lambda>0$ small.

Let $(x(s),y(s))$ be a curve satisfying 
\begin{equation}\label{eq:AffineParameter}
x'y''-y'x''=1
\end{equation}
and consider the rotation surface
\begin{equation}\label{eq:RotationSurface}
\psi(s,\theta)=\left( x(s)\cos\theta, x(s)\sin\theta, y(s) \right).
\end{equation}

\begin{Proposition}\label{prop:CubicRotationSurface}
Assume $M$ is given by equation \eqref{eq:RotationSurface}. Then the quadratic points 
are given by the equation $xy''=x'y'$.
\end{Proposition}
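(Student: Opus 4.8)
The plan is to use the fact that a point of $M$ is quadratic precisely when the cubic form $C=\nabla g$ vanishes there, and to show that on the regular part of $\psi$ (away from the axis, where $\psi$ is an immersion with nondegenerate Blaschke metric) the whole cubic form vanishes exactly along $xy''=x'y'$. The axis points $x=0$ are quadratic by the preceding remark, and they satisfy this equation as well, since there $x=0$ and, the generating curve meeting the axis horizontally, $y'=0$. First I would compute the affine fundamental quantities $h_{ij}=\det(\psi_s,\psi_\theta,\psi_{ij})$ from \eqref{eq:RotationSurface}. Using the affine arc length normalization \eqref{eq:AffineParameter}, $x'y''-y'x''=1$, these reduce to $h_{ss}=x$, $h_{s\theta}=0$ and $h_{\theta\theta}=x^2y'$, so that $\det h=x^3y'$ and the Blaschke metric $g_{ij}=|\det h|^{-1/4}h_{ij}$ is diagonal and depends only on $s$; in the region $x>0,\ y'>0$ one gets $g_{ss}=x^{1/4}(y')^{-1/4}$ and $g_{\theta\theta}=x^{5/4}(y')^{3/4}$ (the sign of $y'$ only changes absolute values, not the zero locus).

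Second, I would cut the number of components to compute down to one. The reflection $(x_1,x_2,y)\mapsto(x_1,-x_2,y)$ is a unimodular affine map preserving $M$ and realizing $\theta\mapsto-\theta$; since the affine normal line, the induced connection $\nabla$ (the tangential projection, which depends only on that line) and $g$ are all invariant under it, so is $C$. As $\partial_\theta\mapsto-\partial_\theta$ while $\partial_s\mapsto\partial_s$, invariance forces the components with an odd number of $\theta$-entries to vanish, $C_{ss\theta}=C_{\theta\theta\theta}=0$. The apolarity relation $g^{ij}C_{ijk}=0$ (a standard property of the Blaschke cubic form, see \cite{Nomizu}) gives, for $k=s$,
\[
g^{ss}C_{sss}+g^{\theta\theta}C_{s\theta\theta}=0,
\]
so $C_{sss}$ and $C_{s\theta\theta}$ differ by the nonvanishing factor $-g^{\theta\theta}/g^{ss}$. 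Hence on the regular part $C\equiv0$ if and only if $C_{s\theta\theta}=0$.

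Finally I would compute $C_{s\theta\theta}=\partial_s g_{\theta\theta}-2\Gamma_{s\theta}^\theta g_{\theta\theta}$. The decisive simplification is that $\psi_{s\theta}=(x'/x)\psi_\theta$ is already tangent to $M$: its component along the affine normal is zero, so the induced connection symbols are read off with no knowledge of $\xi$, namely $\Gamma_{s\theta}^\theta=x'/x$ and $\Gamma_{s\theta}^s=0$. Substituting $g_{\theta\theta}=x^{5/4}(y')^{3/4}$ then yields
\[
C_{s\theta\theta}=\frac{3}{4}\,x^{1/4}(y')^{-1/4}\bigl(xy''-x'y'\bigr),
\]
whose prefactor is finite and nonzero on the regular part. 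Thus $C_{s\theta\theta}=0$ if and only if $xy''=x'y'$, which together with the reduction above proves the proposition.

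I expect the main obstacle to be purely organizational: computing a full cubic form in affine differential geometry ordinarily requires the affine normal $\xi$, whose explicit form for \eqref{eq:RotationSurface} is cumbersome. The two observations above — that $\psi_{s\theta}$ is tangent (so $\Gamma_{s\theta}^\theta$ needs no $\xi$), and that apolarity together with the reflection symmetry leaves only one relevant component — are exactly what make the computation short. The remaining care is only in tracking the sign of $y'$ across the parabolic curve and in passing to the axis points by continuity.
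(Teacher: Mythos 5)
Your proposal is correct and follows essentially the same route as the paper: compute $h_{ss}=x$, $h_{s\theta}=0$, $h_{\theta\theta}=x^2y'$, observe that $\psi_{s\theta}=(x'/x)\psi_\theta$ is tangent so that $\Gamma_{s\theta}^\theta$ needs no knowledge of $\xi$, kill the remaining components, and use apolarity to reduce everything to $C_{s\theta\theta}=(g_{\theta\theta})_s-2(x'/x)g_{\theta\theta}$, which is a nonvanishing multiple of $xy''-x'y'$. The only cosmetic difference is that you obtain $C_{ss\theta}=C_{\theta\theta\theta}=0$ from the reflection symmetry $\theta\mapsto-\theta$ (together with rotational invariance), whereas the paper reads these off directly from $g_{s\theta}=0$ and the explicit form of the induced connection.
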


\begin{proof}
See Appendix \ref{app:CubicRotationSurface}.
\end{proof}

\begin{example}
Take 
$$
(x(t),y(t))=\left( \cos(t) (1+\lambda\sin(t)), \sin(t) \right), \ \ -\frac{\pi}{2}\leq t\leq \frac{\pi}{2},
$$
for some constant $\lambda>0$ small (see Figure \ref{Fig:Rotation}).

\begin{figure}[htb]
 \centering
 \includegraphics[width=0.3\linewidth]{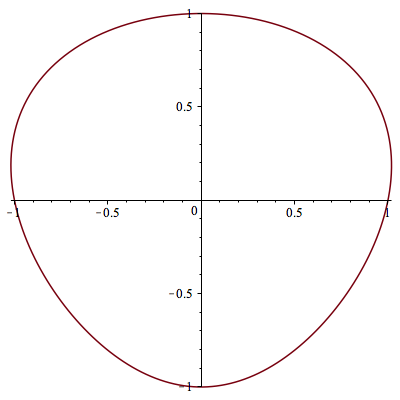}
 \caption{ A section of the rotation surface ($\lambda=0.2$). }
\label{Fig:Rotation}
\end{figure}

Then 
$$
\Delta=x_ty_{tt}-x_{tt}y_t=1+3\lambda\sin t-2\lambda\sin^3t.
$$
Observe that $y'=y_t\frac{dt}{ds}=\cos(t)\Delta^{-1/3}$ and so 
$$
\frac{y'}{x}=\frac{\Delta^{-1/3}}{1+\lambda\sin t}.
$$
Then
$$
\left( \frac{y'}{x} \right)_t= -\frac{\Delta^{-4/3}}{3(1+\lambda\sin t)^2} \left( \Delta_t(1+\lambda\sin t)+3\Delta \lambda\cos t \right)
$$
Since
$$
\Delta_t(1+\lambda\sin t)+3\lambda\cos t\Delta =6\lambda\cos^3t(1+2\lambda\sin t),
$$
we conclude that $\left( \frac{y'}{x} \right)_t<0$, or equivalently $\left( \frac{y'}{x} \right)'<0$. This implies that there no quadratic points other than $(0,0,\pm 1)$. 
\end{example}

\subsection{Hyperbolic region}

Consider an hyperbolic region $H$ bounded by a parabolic curve $C$. Let $\H$ be the line field of the cubic form in the region $H$. As we have seen above, generically $\H$ can be extended continuously to $C$ by considering $\H$ tangent to $C$. Thus the classical Poincar\'e-Hopf Theorem implies the following:
\begin{Proposition}\label{prop:PoincareHyperbolic}
Let $H$ be the hyperbolic region of a generic surface $M$. Then
$$
\sum_{p_i\in H} Ind(p_i,\H) =\chi(H),
$$
where the sum is taken over the quadratic points $p_i$.
\end{Proposition}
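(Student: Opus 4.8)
The plan is to exhibit $\H$ as a single continuous line field on the compact surface-with-boundary $\bar H=H\cup C$ whose only singularities are the interior quadratic points, and then to invoke the classical Poincar\'e--Hopf theorem directly. The point that makes the hyperbolic case genuinely easier than the elliptic one (Proposition \ref{prop:PoincareElliptic}) is that in $H$ the cubic form has a \emph{single} real null direction, so $\H$ is an honest line field rather than a $3$-web; there is no need for the Gauss--Bonnet argument with the factor $6\partial T$. The essential analytic input is borrowed from Section 6: by Propositions \ref{prop:CubicFold} and \ref{prop:CubicGaussCusp}, at every fold point and at each of the finitely many Gauss cusps of $C$ the real null direction of the cubic form tends to the tangent line of $C$. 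Hence $\H$ extends continuously across $C$ to a line field $\bar\H$ on $\bar H$ that is everywhere tangent to $C$; by genericity no quadratic point lies on $C$, so $\bar\H$ is nonsingular along the boundary and its singular set is exactly the set of interior quadratic points, each of index $\pm1$ by Section 5.

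With this setup in place, I would run the standard doubling argument underlying Poincar\'e--Hopf for a field tangent to the boundary. Let $DH$ be the double of $\bar H$ across $C$. Because $\bar\H$ is tangent to $C$, its mirror reflection agrees with it along $C$ (a tangent direction lies in the mirror and is fixed by the reflection), so the two copies glue to a continuous line field on the closed surface $DH$ whose singularities are precisely the doubles of the interior singularities, with unchanged indices. Passing, if necessary, to the orientation double cover so that this line field lifts to a genuine vector field, the classical Poincar\'e--Hopf theorem (as in \cite{Manfredo}) gives that the total index equals $\chi(DH)$. Since $C$ is a disjoint union of circles, $\chi(C)=0$, whence $\chi(DH)=2\chi(\bar H)-\chi(C)=2\chi(H)$, while the index sum over $DH$ is twice the index sum over the quadratic points of $H$. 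Dividing by $2$ yields $\sum_{p_i\in H}\mathrm{Ind}(p_i,\H)=\chi(H)$, as claimed.

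The main obstacle is the careful justification that $\H$ really extends continuously and tangentially across \emph{all} of $C$, including through the Gauss cusps, where the local normal form degenerates; this is exactly the content of the Section 6 computations and is where the hypothesis ``generic'' is used (both to fix the fold/cusp normal forms and to keep the quadratic points off $C$). A secondary technical point is the line-field-versus-vector-field bookkeeping: one must confirm that the indices are integers---guaranteed here since they are $\pm1$ by Section 5---and that the reflection and the orientation-cover construction preserve indices, so that no half-integer contribution or orientation defect corrupts the count. Once these are settled, the identity follows formally from the doubling computation above.
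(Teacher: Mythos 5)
Your proposal is correct and follows essentially the same route as the paper: the paper's proof consists precisely of invoking the Section~6 results (Propositions \ref{prop:CubicFold} and \ref{prop:CubicGaussCusp}) to extend $\H$ continuously and tangentially across the parabolic curve, and then applying the classical Poincar\'e--Hopf theorem for a field tangent to the boundary. Your doubling computation merely spells out the standard proof of that boundary-tangent version, so there is nothing to add beyond the minor remark that orientability of the line field (not just of the surface) is what one needs for the lift to a vector field, a standard point that does not affect the index count.
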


The following corollary is proved in \cite{Uribe}:

\begin{corollary}
A generic hyperbolic disc has an odd number of quadratic points.
\end{corollary}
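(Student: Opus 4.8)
The plan is to combine the Poincar\'e--Hopf identity of Proposition \ref{prop:PoincareHyperbolic} with the local index computation of Section 5 and finish with a parity argument. A disc $H$ has $\chi(H)=1$, so Proposition \ref{prop:PoincareHyperbolic} immediately yields
$$
\sum_{p_i\in H} Ind(p_i,\H) = 1,
$$
where the sum runs over the quadratic points. The whole proof then reduces to translating this single constraint into a statement about the parity of the number of points.

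First I would invoke genericity to guarantee that the boundary parabolic curve $C$ carries no quadratic points and that every quadratic point in the interior of $H$ is a \emph{simple} singularity of the cubic form, so that hypothesis \eqref{Hyp:1}, namely $\delta=ad-bc\neq 0$, holds at each $p_i$. The Propositions of Section 5 (the cases $bc=16$, $bc=-16$ and $bc=0$) then establish that each such singularity has $Ind(p_i,\H)=\pm 1$: in every region of the parameter space the index is computed to be either $+1$ or $-1$, never anything else.

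Next I would sort the quadratic points by their index. Writing $n_+$ for the number of points with index $+1$ and $n_-$ for the number with index $-1$, the displayed identity becomes $n_+-n_-=1$. The total number of quadratic points is then
$$
N = n_+ + n_- = (n_+-n_-) + 2n_- = 1 + 2n_-,
$$
which is odd. This completes the count.

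The only delicate point is the genericity hypothesis itself. One must verify that, for a generic surface, no quadratic point lies on the bounding parabolic curve, that all interior quadratic points are simple, and that the line field $\H$ extends continuously to $C$ tangentially (as established in Section 6 and required by Proposition \ref{prop:PoincareHyperbolic}). Granting these facts, which have already been set up in the preceding sections, the remainder of the argument is a purely combinatorial parity count and requires no further computation.
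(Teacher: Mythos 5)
Your proof is correct and is essentially the argument the paper intends: the corollary is stated immediately after Proposition \ref{prop:PoincareHyperbolic} precisely so that it follows from $\chi(H)=1$, the fact (from the Section 5 parameter-space analysis) that every simple quadratic point of $\H$ has index $\pm 1$, and the parity identity $n_++n_-=(n_+-n_-)+2n_-$. The paper itself only cites \cite{Uribe} for the result, but your derivation matches the route the surrounding text sets up.
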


\subsection{A general result}

The following general result is a consequence of Propositions \ref{prop:PoincareElliptic} and \ref{prop:PoincareHyperbolic}:

\begin{thm}
Denote by $E$ and $H$ the elliptic and hyperbolic regions of a generic compact surface $M$ in the projective space $\P^3$. Then 
$$
\sum_{p_i\in E} Ind(p_i,\E) +\sum_{p_i\in H} Ind(p_i,\H)=\chi(M),
$$
where the sum is taken over the quadratic points $p_i$.
\end{thm}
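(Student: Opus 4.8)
The plan is to derive the final theorem as an immediate consequence of the two regional Poincaré--Hopf statements established just above, namely Proposition~\ref{prop:PoincareElliptic} for the elliptic region and Proposition~\ref{prop:PoincareHyperbolic} for the hyperbolic region. The essential observation is that for a generic compact surface $M\subset\P^3$, the elliptic region $E$ and the hyperbolic region $H$ together decompose $M$ along the parabolic curve $C$, which forms their common boundary. Since $C$ is a one-dimensional curve (a union of circles, generically) and therefore has Euler characteristic zero, the surface decomposes additively in a way compatible with Euler characteristics.

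First I would write $M=\overline{E}\cup\overline{H}$ with $\overline{E}\cap\overline{H}=C$. The inclusion--exclusion principle for Euler characteristics then gives
$$
\chi(M)=\chi(\overline{E})+\chi(\overline{H})-\chi(C)=\chi(E)+\chi(H),
$$
using that $\chi(C)=0$ for the generic parabolic curve and that the closures contribute the same Euler characteristic as the open regions (the boundary being a disjoint union of circles). Next I would apply Proposition~\ref{prop:PoincareElliptic}, which identifies $\sum_{p_i\in E} Ind(p_i,\E)$ with $\chi(E)$, and Proposition~\ref{prop:PoincareHyperbolic}, which identifies $\sum_{p_i\in H} Ind(p_i,\H)$ with $\chi(H)$. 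Adding these two equations and substituting the Euler characteristic decomposition yields
$$
\sum_{p_i\in E} Ind(p_i,\E) +\sum_{p_i\in H} Ind(p_i,\H)=\chi(E)+\chi(H)=\chi(M),
$$
which is exactly the claimed identity.

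The subtle point, and the only place where genericity is really needed, is ensuring that the two regional propositions can be applied simultaneously and that no quadratic points lie on the parabolic curve itself. Genericity guarantees that the quadratic points are isolated and fall strictly inside $E$ or $H$, never on $C$, so the two sums are over disjoint sets and together exhaust all quadratic points of $M$. One must also check that the line fields used to extend across $C$ in the two propositions are compatible: the analysis of Section~6 shows that near a fold point or a Gauss cusp there is a single line field $\L$ tangent to $C$ which coincides with $\H$ on the hyperbolic side and with the selected sheet $\E_1$ of the $3$-web on the elliptic side. This coherence is what allows the boundary contributions along $C$ to be treated consistently, so no spurious boundary index appears.

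I expect the main obstacle to be purely bookkeeping rather than conceptual: one must verify that the Euler characteristic decomposition holds with the correct handling of the shared boundary $C$, and that the orientation conventions in the two regional Poincaré--Hopf results are consistent so that the indices add rather than cancel. Since both Proposition~\ref{prop:PoincareElliptic} and Proposition~\ref{prop:PoincareHyperbolic} are already proved with $\chi(E)$ and $\chi(H)$ on their right-hand sides respectively, and since the parabolic curve contributes nothing to $\chi(M)$, the addition is straightforward and the theorem follows at once.
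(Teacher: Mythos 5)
Your proposal is correct and follows exactly the paper's own route: the theorem is stated as an immediate consequence of Propositions~\ref{prop:PoincareElliptic} and \ref{prop:PoincareHyperbolic}, with the two regional identities added and $\chi(E)+\chi(H)=\chi(M)$ supplied by the vanishing Euler characteristic of the parabolic curve. Your additional remarks on the coherence of the extended line field $\L$ along $C$ and on quadratic points avoiding $C$ generically are consistent with the discussion in Section~6 and do not change the argument.
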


\appendix

\section{Cubic form of graphs of functions}

\subsection{Elliptic and hyperbolic quadratic points}\label{app:CubicOrdern}

Assume
$$
\psi(x,y)=(x,y,f(x,y))
$$
and write $\phi^4=f_{xx}f_{yy}-f_{xy}^2$. Then the Blaschke metric is given by
$$
g_{11}=\frac{f_{xx}}{\phi};\ g_{11}=\frac{f_{xy}}{\phi};\ g_{22}=\frac{f_{yy}}{\phi}.
$$
The affine normal $\xi$ is given by
$$
\xi=\phi(0,0,1)+Z_1\psi_x+Z_2\psi_y,
$$
where
\[
\begin{array}{l}
Z_1=\frac{1}{\phi^4} \left( -f_{yy}\phi_x+f_{xy}\phi_y   \right) \cr
Z_2=\frac{1}{\phi^4} \left( f_{xy}\phi_x-f_{xx}\phi_y   \right)
\end{array}
\]
(see \cite[p.47]{Nomizu}). Writing 
$$
\psi_{xx}=\Gamma_{11}^1\psi_x+\Gamma_{11}^2\psi_y+g_{11}\xi
$$
and using the above formulas we obtain
\[
\begin{array}{l}
\Gamma_{11}^1+g_{11}Z_1=0\cr
\Gamma_{11}^2+g_{11}Z_2=0.
\end{array}
\]
Thus
$$
C_{111}=(g_{11})_x-2\left(\Gamma_{11}^1g_{11}+\Gamma_{11}^2g_{12}\right)=(g_{11})_x+2\left(g_{11}^2Z_1+g_{11}g_{12}Z_2\right).
$$
Straightforward calculations show that
$$
C_{111}=\frac{1}{\phi^5} \left( -3f_{xx}\phi^3\phi_x+f_{xxx}\phi^4 \right)
$$
and so,
\begin{equation}\label{eq:Cubic1}
\phi^5 C_{111}=\frac{1}{4}f_{xxx}f_{xx}f_{yy}-\frac{3}{4}f_{xyy}f_{xx}^2+\frac{3}{2}f_{xxy}f_{xx}f_{xy}-f_{xxx}f_{xy}^2.
\end{equation}
Similarly we obtain
\begin{equation}\label{eq:Cubic2}
\phi^5 C_{112}=-\frac{1}{4}f_{xx}^2f_{yyy}+\frac{3}{4}f_{xx}f_{yy}f_{xxy}-\frac{1}{2}f_{xy}f_{yy}f_{xxx}.
\end{equation}
\begin{equation}\label{eq:Cubic3}
\phi^5 C_{122}=-\frac{1}{4}f_{yy}^2f_{xxx}+\frac{3}{4}f_{xx}f_{yy}f_{xyy}-\frac{1}{2}f_{xy}f_{xx}f_{yyy}.
\end{equation}
\begin{equation}\label{eq:Cubic4}
\phi^5 C_{222}=\frac{1}{4}f_{yyy}f_{xx}f_{yy}-\frac{3}{4}f_{xxy}f_{yy}^2+\frac{3}{2}f_{xyy}f_{yy}f_{xy}-f_{yyy}f_{xy}^2.
\end{equation}

\begin{lemma}
Assume
\begin{equation*}
f(x,y)=\frac{1}{2}(x^2+y^2)+h(x,y),
\end{equation*}
where $h=h_{n+3}$ is a non-zero homogeneous polynomial of degree $n+3$.
Then, up to common factor, the cubic form of $f$ is given by 
\begin{equation*}
\omega=C_{111}(dx^3-3dxdy^2)+C_{222}(dy^3-3dydx^2),
\end{equation*}
where
\begin{equation*}
\left\{
\begin{array}{l}
C_{111}=h_{xxx}-3h_{xyy}+O(n+1)\cr
C_{222}=h_{yyy}-3h_{yxx}+O(n+1).
\end{array}
\right.
\end{equation*}
\end{lemma}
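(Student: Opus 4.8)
The plan is to substitute $f=\tfrac12(x^2+y^2)+h$ with $h=h_{n+3}$ homogeneous of degree $n+3$ directly into the explicit formulas \eqref{eq:Cubic1}--\eqref{eq:Cubic4} for $\phi^5C_{ijk}$, and to keep careful track of the order of every resulting term. First I would record the derivatives together with their homogeneity: $f_{xx}=1+h_{xx}$, $f_{yy}=1+h_{yy}$, $f_{xy}=h_{xy}$, where $h_{xx},h_{yy},h_{xy}$ are homogeneous of degree $n+1$, while the third derivatives $h_{xxx},h_{xxy},h_{xyy},h_{yyy}$ are homogeneous of degree $n$. Consequently $\phi^4=f_{xx}f_{yy}-f_{xy}^2=1+O(n+1)$, and hence $\phi^5=1+O(n+1)$ is a nonvanishing function near $p$.

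Next I would evaluate \eqref{eq:Cubic1}. In each of its four terms the only way to produce something of order $\le n$ is to keep the single third derivative and replace every remaining factor by its constant part, i.e. set $f_{xx}=f_{yy}=1$; any other choice introduces an extra factor drawn from $\{h_{xx},h_{yy},h_{xy}\}$, each of order $n+1$, and lands in $O(2n+1)$. This gives
\[
\phi^5C_{111}=\tfrac14\bigl(h_{xxx}-3h_{xyy}\bigr)+O(2n+1),
\]
and the same computation applied to \eqref{eq:Cubic4} yields $\phi^5C_{222}=\tfrac14(h_{yyy}-3h_{yxx})+O(2n+1)$. Since the cubic form is defined only up to a common factor, I may discard the scalar $\tfrac14$ and the nonvanishing function $\phi^5$; because the surviving error is $O(2n+1)\subseteq O(n+1)$, this produces exactly $C_{111}=h_{xxx}-3h_{xyy}+O(n+1)$ and $C_{222}=h_{yyy}-3h_{yxx}+O(n+1)$.

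It then remains to recognize the special shape of $\omega$. Running the identical substitution through \eqref{eq:Cubic2} and \eqref{eq:Cubic3} gives $\phi^5C_{112}=-\tfrac14(h_{yyy}-3h_{yxx})+O(2n+1)$ and $\phi^5C_{122}=-\tfrac14(h_{xxx}-3h_{xyy})+O(2n+1)$, so that $C_{112}=-C_{222}+O(2n+1)$ and $C_{122}=-C_{111}+O(2n+1)$. (These two relations are simply the leading order of the apolarity identity $g^{ij}C_{ijk}=0$ together with $g^{ij}=\delta^{ij}+O(n+1)$.) Substituting into the general expansion $\omega=C_{111}\,dx^3+3C_{112}\,dx^2dy+3C_{122}\,dxdy^2+C_{222}\,dy^3$ and collecting terms then gives $\omega=C_{111}(dx^3-3dxdy^2)+C_{222}(dy^3-3dydx^2)$ up to $O(n+1)$, as claimed.

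Since everything reduces to substitution into \eqref{eq:Cubic1}--\eqref{eq:Cubic4}, the proof is essentially a bookkeeping exercise, and the one step that demands genuine care --- and which I expect to be the main obstacle --- is certifying uniformly that every deviation from the stated leading terms really has order $\ge n+1$. The crux is the observation that any such deviation is forced to contain at least one factor from $\{h_{xx},h_{yy},h_{xy}\}$ or from the correction $\phi^5-1$, each of order $\ge n+1$, so that no term of order $\le n$ other than $h_{xxx}-3h_{xyy}$ and $h_{yyy}-3h_{yxx}$ can survive.
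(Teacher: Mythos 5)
Your proposal is correct and is essentially the paper's own proof, which consists of the single sentence ``Taking the parcels of order at most $n$ in formulas \eqref{eq:Cubic1}--\eqref{eq:Cubic4}, we prove the result''; you have simply carried out that bookkeeping explicitly (and correctly), including the observation that the only order-$\le n$ contributions come from a third derivative of $h$ multiplied by the constant parts of $f_{xx},f_{yy}$, and that the resulting relations $C_{112}\approx-C_{222}$, $C_{122}\approx-C_{111}$ yield the stated shape of $\omega$.
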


\begin{proof}
Taking the parcels of order at most $n$ in formulas \eqref{eq:Cubic1}, \eqref{eq:Cubic2}, \eqref{eq:Cubic3} and \eqref{eq:Cubic4}, we prove the result.
\end{proof}

\begin{lemma}
Assume $f$ is given by equation \eqref{eq:NormalSimpleHyp}. Then the cubic form is given by equation \eqref{eq:CubicFormHyp}. 
\end{lemma}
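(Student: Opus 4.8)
The plan is to substitute the jet \eqref{eq:NormalSimpleHyp} directly into the four formulas \eqref{eq:Cubic1}--\eqref{eq:Cubic4} for $\phi^5 C_{ijk}$ and read off the linear (order-one) part of each component, since $\omega_1$ is by definition the $1$-jet of the cubic form. First I would record the low-order derivatives of $f$. From \eqref{eq:NormalSimpleHyp} the second derivatives are $f_{xx}=O_2$, $f_{yy}=O_2$ and $f_{xy}=1+O_2$, where $O_k$ denotes terms of order $\geq k$, while the third derivatives have nonzero linear parts
$$
f_{xxx}=ax+by+O_2,\quad f_{yyy}=cx+dy+O_2,\quad f_{xxy}=bx+ey+O_2,\quad f_{xyy}=ex+cy+O_2.
$$
In particular $f_{xx}f_{yy}-f_{xy}^2=-1+O_2$ is a nonzero constant at the origin, so $\phi$ (defined via its fourth power, with the usual sign convention for the indefinite Blaschke metric) is a nonzero constant there and the factor $\phi^{-5}$ only rescales $\omega$, hence affects only its conformal class, which is all that \eqref{eq:CubicFormHyp} asserts.

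Next I would track orders in each of the four formulas. In \eqref{eq:Cubic1} the three terms $\tfrac14 f_{xxx}f_{xx}f_{yy}$, $-\tfrac34 f_{xyy}f_{xx}^2$ and $\tfrac32 f_{xxy}f_{xx}f_{xy}$ each carry a factor $f_{xx}$ or $f_{xx}^2$, hence are of order $\geq 3$; the only contribution to the linear part comes from $-f_{xxx}f_{xy}^2$, whose $1$-jet is $-(ax+by)$. Thus, up to the constant $\phi^{-5}$, the component $C_{111}$ equals $ax+by$ at order one. The symmetric computation in \eqref{eq:Cubic4} gives $C_{222}\sim cx+dy$ from the term $-f_{yyy}f_{xy}^2$. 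For the mixed components I would observe that every term of \eqref{eq:Cubic2} and of \eqref{eq:Cubic3} contains at least one factor $f_{xx}$ or $f_{yy}$, so $\phi^5 C_{112}$ and $\phi^5 C_{122}$ are of order $\geq 3$; consequently $C_{112}$ and $C_{122}$ have vanishing $1$-jet. Writing the cubic form as $C=C_{111}dx^3+3C_{112}dx^2dy+3C_{122}dxdy^2+C_{222}dy^3$ and retaining only the linear parts then yields $\omega_1=(ax+by)dx^3+(cx+dy)dy^3$ up to the common factor $\phi^{-5}$, which is exactly \eqref{eq:CubicFormHyp}.

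There is no serious obstacle here: the proof is pure order-bookkeeping, and the mild care required is simply to confirm which monomials survive at order one in each formula. The geometrically meaningful point, worth stating, is the vanishing of the linear parts of the mixed components $C_{112}$ and $C_{122}$: it reflects the apolarity relation $g^{ij}C_{ijk}=0$ together with the fact that at a hyperbolic point the Blaschke metric is off-diagonal, $g_{11}=g_{22}=O_2$ and $g_{12}=1+O_2$, so that the trace-free condition forces $C_{12k}=0$ at the origin. The only thing to watch in the computation is that the coefficient $e$ enters exclusively through $f_{xxy}$, $f_{xyy}$ and the second derivatives, all of which occur in terms of order $\geq 3$; this is why $e$ correctly drops out of $\omega_1$, in agreement with \eqref{eq:CubicFormHyp}.
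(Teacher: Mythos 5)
Your proof is correct and follows exactly the route of the paper, whose entire argument is ``take the parcels of order at most $1$ in formulas \eqref{eq:Cubic1}--\eqref{eq:Cubic4}''; you have simply carried out that bookkeeping explicitly, correctly identifying $-f_{xxx}f_{xy}^2$ and $-f_{yyy}f_{xy}^2$ as the only order-one contributions and checking that the mixed components are of higher order. The only cosmetic point is the overall sign: the surviving terms give $-(ax+by)$ and $-(cx+dy)$, and $\phi^4=f_{xx}f_{yy}-f_{xy}^2=-1+O_2$ requires the absolute-value convention, but as you note these only affect the common nonzero factor, which is irrelevant for \eqref{eq:CubicFormHyp}.
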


\begin{proof}
Take the parcels of order at most $1$ in formulas \eqref{eq:Cubic1}, \eqref{eq:Cubic2}, \eqref{eq:Cubic3} and \eqref{eq:Cubic4}.
\end{proof}

\subsection{Cubic forms at parabolic points}\label{app:CubicParabolic}

\paragraph{Proof of Proposition \ref{prop:CubicFold}}

Assume $M$ is given by \eqref{eq:NormalFold}.
Then 
$$
X=(1,0,f_x)=(1,0,x),\ \ Y=(0,1,f_y)=(0,1,\frac{y^2}{2}),
$$
$L=1,N=y, M=0$, $LN-M^2=y$, and so $(x,y)$ is parabolic if $y=0$, elliptic if $y>0$ and hyperbolic if $y<0$.
Let $\phi=|y|^{1/4}$. Then 
$$
g(X,X)=\frac{1}{\phi},\ g(X,Y)=0,\ g(Y,Y)=\frac{y}{\phi}.
$$
The affine normal is $\xi=\phi\xi_0+Z_1X+Z_2Y$, $\xi_0=(0,0,1)$, where $Z_1=-\phi_x=0$, $Z_2=-\frac{\phi_y}{y}=\mp \frac{1}{4y\phi^3}$.
Moreover
$$
\nabla_XX=\pm \frac{1}{4y\phi^4}Y, \ \ \nabla_XY=0,\ \ \nabla_YY=\pm\frac{1}{4\phi^4}Y.
$$
Thus $C(X,X,X)=C(X,Y,Y)=0$, $C(X,X,Y)=\mp\frac{1}{4\phi^5}$
and
$$
C(Y,Y,Y)=\mp\frac{y}{4\phi^5}+\frac{1}{\phi}\mp 2\frac{y}{4\phi^5}=\pm\frac{1}{4\phi^5}y,
$$
which proves Proposition \ref{prop:CubicFold}.

\paragraph{Proof of Proposition \ref{prop:CubicGaussCusp}}

Assume $M$ is given by equation \eqref{eq:NormalGaussCusp}.
Then 
$$
X=(1,0,f_x)=(1,0,x+\frac{1}{2}y^2),\ \ Y=(0,1,f_y)=(0,1,xy+\lambda\frac{ y^3}{6}),
$$
$$
f_{xx}=(0,0,1),\ \ f_{xy}=(0,0,y),\ \ f_{yy}=(0,0,x+\lambda\frac{y^2}{2}),\ \ \phi^4=\left| x+(\lambda-2)\frac{y^2}{2}\right|,
$$
and the Blaschke metric is given by
$$
g(X,X)=\frac{1}{\phi},\ g(X,Y)=\frac{y}{\phi},\ g(Y,Y)=\frac{x+\lambda\frac{y^2}{2}}{\phi}.
$$
We can write $\xi=\phi\xi_0+Z_1X+Z_2Y, \ \  \xi_0=(0,0,1)$,
where
$$
Z_1+yZ_2=-\phi_x,\ \ yZ_1+(x+\lambda\frac{y^2}{2})Z_2=-\phi_y.
$$

\paragraph{Elliptic points}

Assume $x+( \lambda-2 )\frac{y^2}{2}>0$. Then
$$
\phi_x=\frac{1}{4\phi^3}, \ \ \phi_y=\frac{1}{4\phi^3}(\lambda-2)y,
$$
and so 
$$
(Z_1,Z_2)=\frac{1}{4\phi^7}\left( -x+(\lambda-4)\frac{y^2}{2}, (3-\lambda)y \right).
$$
Then
$$
\nabla_XX=-\frac{f_{xx}}{\phi}(Z_1X+Z_2Y)=\frac{1}{4\phi^8}\left[ \left( x-(\lambda-4)\frac{y^2}{2}\right) X+(\lambda-3)yY\right],
$$
$$
\nabla_XY=y\nabla_XX,\ \ \nabla_YY=\left( x+\lambda\frac{y^2}{2} \right)\nabla_XX.
$$
The cubic form is thus 
$$
C_{111}=-\frac{1}{4\phi^5}-\frac{2}{4\phi^9}\left[ \left( x-(\lambda-4)\frac{y^2}{2}\right)+(\lambda-3)y^2 \right] =-\frac{3}{4\phi^5}.
$$
$$
C_{112}=\frac{(2-\lambda)y}{4\phi^5}-\frac{2y}{4\phi^9}\left[ \left( x-(\lambda-4)\frac{y^2}{2}\right)+(\lambda-3)y^2 \right] =-\frac{\lambda y}{4\phi^5}.
$$
$$
C_{122}=\frac{1}{\phi}-\frac{1}{4\phi^5}(x+\lambda \frac{y^2}{2})-2yg(\nabla_XX,Y)=\frac{1}{4\phi^5} (3x-\lambda\frac{y^2}{2}).
$$
$$
C_{222}=\frac{\lambda y}{\phi}-\frac{(\lambda-2)y}{4\phi^5}(x+\lambda \frac{y^2}{2})-2(x+\lambda\frac{y^2}{2})g(\nabla_XX,Y)
$$
$$
C_{222}=\frac{y}{4\phi^5}(x(6+\lambda)+\lambda(\lambda-2) \frac{y^2}{2}).
$$

\paragraph{Hyperbolic points}

Assume $x+( \lambda-2 )\frac{y^2}{2}<0$. Then $\phi^4=-x+(2-\lambda)\frac{y^2}{2}$, 
$$
\phi_x=-\frac{1}{4\phi^3}, \ \ \phi_y=\frac{1}{4\phi^3}(2-\lambda)y.
$$
The formulas for $Z_1$, $Z_2$, $\nabla_XX$, $\nabla_XY$, $\nabla_YY$ remain the same. The formulas
for $C_{ijk}$ also remain the same.

\section{Cubic form of a rotation surface}\label{app:CubicRotationSurface}

In this appendix we prove Proposition \ref{prop:CubicRotationSurface}. 
Assume $M$ is a rotation surface given by equation \eqref{eq:RotationSurface}.
Then
\[
\left\{
\begin{array}{l}
\psi_s= \left( x'\cos\theta, x'\sin\theta, y'   \right)  \cr
\psi_{\theta}=\left(  -x\sin\theta, x\cos\theta, 0 \right)
\end{array}
\right.
\]
Thus
$$
\psi_s\times \psi_{\theta}=x \left(  -y'\cos\theta, -y'\sin\theta, x' \right).
$$
Moreover,
\[
\left\{
\begin{array}{l}
\psi_{ss}= \left( x''\cos\theta, x''\sin\theta, y''   \right)  \cr
\psi_{s\theta}=\left(  -x'\sin\theta, x'\cos\theta, 0 \right)\cr
\psi_{\theta\theta}=-\left(  x\cos\theta, x\sin\theta, 0 \right)
\end{array}
\right.
\]
Thus
$$
L=x;\ \ M=0;\ \ N=x^2y';\ \ \phi^4=x^3y'.
$$
Then
$$
\nu=\frac{x}{\phi} \left(  -y'\cos\theta, -y'\sin\theta, x' \right), \ \ g_{11}=\frac{x}{\phi};\ \ g_{12}=0;\ \ g_{22}=\frac{x^2y'}{\phi}.
$$

\begin{lemma}
The affine normal vector field $\xi$ can be written as
$$
\xi=\left( a\cos\theta, a\sin\theta, b \right),
$$
for certain $a=a(s)$, $b=b(s)$. 
\end{lemma}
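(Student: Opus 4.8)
The plan is to exploit the rotational symmetry of $M$, using the fact that the affine normal is an equiaffine invariant. The rotations $R_\alpha\in\mathrm{SO}(3)$ about the axis of revolution are unimodular linear maps carrying $M$ to itself, with $R_\alpha\,\psi(s,\theta)=\psi(s,\theta+\alpha)$. By the covariance of the affine normal under unimodular maps (see \cite{Nomizu}), and using continuity in $\alpha$ to fix the sign, I would obtain
$$
\xi(s,\theta+\alpha)=R_\alpha\,\xi(s,\theta).
$$
With $\theta=0$ this reads $\xi(s,\alpha)=R_\alpha\,\xi(s,0)$; writing $\xi(s,0)=(p(s),q(s),b(s))$ and relabelling $\alpha$ as $\theta$ gives
$$
\xi(s,\theta)=\big(p\cos\theta-q\sin\theta,\ p\sin\theta+q\cos\theta,\ b\big),
$$
so the $\theta$-dependence is already forced to be rotational; it remains only to show $q\equiv 0$.

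To eliminate the $q$-component I would use the reflection $\sigma(u,v,w)=(u,-v,w)$ across the meridian plane, which also preserves $M$, with $\sigma\,\psi(s,\theta)=\psi(s,-\theta)$. Since the affine normal is defined up to sign and is covariant under linear maps, $\xi(s,-\theta)=\pm\,\sigma\,\xi(s,\theta)$; evaluating at $\theta=0$ yields $(p,q,b)=\pm(p,-q,b)$. The minus sign would force $p=b=0$, making $\xi(s,0)$ parallel to $\psi_\theta(s,0)=(0,x,0)$ and hence tangent to $M$, contradicting transversality. Thus the sign is $+$, giving $q\equiv0$ and $\xi(s,\theta)=(a\cos\theta,a\sin\theta,b)$ with $a=p(s)$, as claimed.

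Alternatively, and more concretely, I would invoke Blaschke's identity $\xi=\tfrac12\Delta_g\psi$, where $\Delta_g$ is the Laplacian of the Blaschke metric. From the data already computed, $\det(g_{ij})=g_{11}g_{22}=\tfrac{x}{\phi}\cdot\tfrac{x^2y'}{\phi}=\tfrac{x^3y'}{\phi^2}=\phi^2$, so $\sqrt{\det g}=\phi$ and
$$
\Delta_g\psi=\frac1\phi\Big[\partial_s\big(\tfrac{\phi^2}{x}\,\psi_s\big)+\partial_\theta\big(\tfrac{\phi^2}{x^2y'}\,\psi_\theta\big)\Big].
$$
Since both coefficients $\tfrac{\phi^2}{x}$ and $\tfrac{\phi^2}{x^2y'}$ depend on $s$ only, while $\psi_s,\psi_{ss}$ contribute terms of shape $(\,\cdot\,\cos\theta,\,\cdot\,\sin\theta,\,\cdot\,)$ and $\psi_{\theta\theta}=-(x\cos\theta,x\sin\theta,0)$, every $\theta$-dependence enters through $\cos\theta,\sin\theta$ in the first two slots and not at all in the third. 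Reading off coefficients gives $\xi=(a\cos\theta,a\sin\theta,b)$ directly, together with explicit formulas for $a(s)$ and $b(s)$; note that the form is unaffected by any overall sign convention in Blaschke's identity.

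The main obstacle I anticipate is the sign bookkeeping in the reflection step: one must check that the affine normal, being defined only up to sign, transforms by a definite sign under the orientation-reversing map $\sigma$, and it is the transversality argument that resolves this ambiguity. In the computational alternative the corresponding delicate point is merely recalling the correct normalization $\sqrt{\det g}=\phi$ together with the Blaschke formula; once these are in place the conclusion is immediate, and this second route has the advantage of producing the explicit scalars $a$ and $b$ that are needed in the sequel.
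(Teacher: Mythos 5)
Your proposal is correct, but both of your routes differ from the one the paper actually takes. The paper works with the affine conormal $\nu=\frac{x}{\phi}\left(-y'\cos\theta,-y'\sin\theta,x'\right)$ computed just before the lemma: it posits $\xi=(a\cos\theta,a\sin\theta,b)$ as an ansatz, observes that $\nu_\theta\cdot\xi=0$ holds identically for such a $\xi$, and reduces the remaining defining conditions $\nu\cdot\xi=1$ and $\nu_s\cdot\xi=0$ to a $2\times 2$ linear system in $(a,b)$, which is solvable (its determinant is a multiple of $x'y''-x''y'=1$ by the normalization \eqref{eq:AffineParameter}); uniqueness of the affine normal then forces $\xi$ to have the stated form. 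Your first route, via equivariance of the affine normal under the rotations $R_\alpha\in\mathrm{SO}(3)$ preserving $M$, is cleaner conceptually and generalizes to any surface with a symmetry group; note that since $\det R_\alpha=1$ no sign care is needed there, and your transversality argument correctly disposes of the sign ambiguity in the reflection step. Your second route, via $\xi=\tfrac12\Delta_g\psi$, is closest in spirit to the paper's in that it also yields explicit scalars $a(s),b(s)$; your computation $\det g=\phi^2$ is right (one should write $\sqrt{|\det g|}$ in general, but here $x^3y'>0$ is already implicit in $\phi^4=x^3y'$), and the observation that the coefficients $\phi^2/x$ and $\phi^2/(x^2y')$ depend on $s$ alone is exactly what makes the $\theta$-dependence purely rotational. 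The paper's conormal system has the mild advantage of making the solvability hinge visibly on the affine arclength normalization $x'y''-y'x''=1$, which is also what the subsequent computation of the cubic form uses.
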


\begin{proof}
Observe that $\nu_{\theta}\cdot\xi=0$. The conditions $\nu\cdot\xi=1$ and $\nu_s\cdot\xi=0$ are given by 
\[
\left\{
\begin{array}{l}
-ay'+bx'=\frac{\phi}{x} \cr
 -ay''+bx''=-\left(\frac{\phi}{x}\right)^2 \left(\frac{x}{\phi} \right)'
\end{array}
\right.
\]
This system certainly has a solution $(a,b)$. 
\end{proof}

We can write 
\[
\left\{
\begin{array}{l}
\psi_{ss}=  \alpha\psi_s+  h_{11}\xi \cr
\psi_{s\theta}= \frac{x'}{x}\psi_{\theta} \cr
\psi_{\theta\theta}= \beta\psi_s+ h_{22}\xi
\end{array}
\right.
\]
Thus
$$
\nabla_{\psi_s}\psi_s=\alpha \psi_s;\ \ \nabla_{\psi_s}\psi_{\theta}=\nabla_{\psi_{\theta}}\psi_s=\frac{x'}{x}\psi_{\theta};\ \ \nabla_{\psi_{\theta}}\psi_{\theta}=\beta \psi_s.
$$
Then
$$
C(\psi_{\theta},\psi_{\theta},\psi_{\theta})=-2g(\nabla_{\psi_{\theta}}\psi_{\theta},\psi_{\theta})=0; \ \ C(\psi_s,\psi_s,\psi_{\theta})=-2g(\nabla_{\psi_{\theta}}\psi_s,\psi_s)=0,
$$
$$
C(\psi_s,\psi_{\theta},\psi_{\theta})=(g_{22})_s-2g(\nabla_{\psi_s}\psi_{\theta},\psi_{\theta})=\left(   \frac{x^2y'}{\phi} \right)' -2\frac{x'}{x}\frac{x^2y'}{\phi}.
$$
By the apolarity condition, a point is quadratic if and only if $C(\psi_s,\psi_{\theta},\psi_{\theta})=0$. This condition is equivalent to
$$
\frac{1}{\phi^2}\left( (2xx'y'+x^2y'')\phi- \phi' x^2y' \right)=2\frac{xx'y'}{\phi}.
$$
After some simplifications we obtain that this condition is equivalent to \newline $xy''-x'y'=0$, thus proving Proposition \ref{prop:CubicRotationSurface}.

\end{document}